%
%
\documentclass[10pt]{article}
\usepackage{amsthm,amsfonts,amsmath,amscd,amssymb,verbatim}
\usepackage{graphicx}
\usepackage{psfrag}

\usepackage{pinlabel} 
\parindent=0pt
\parskip=4pt
\title{Hamiltonian unknottedness of certain monotone Lagrangian tori in $S^2\times S^2$}
\author{K.~Cieliebak and M.~Schwingenheuer}
\date{}
%
%
\theoremstyle{plain}
\newtheorem{theorem}{Theorem}[section]
\newtheorem{thm}[theorem]{Theorem}

\newtheorem{cor}[theorem]{Corollary}
\newtheorem{proposition}[theorem]{Proposition}
\newtheorem{prop}[theorem]{Proposition}
\newtheorem{lemma}[theorem]{Lemma}

\theoremstyle{remark}

\newtheorem{remark}[theorem]{Remark}

\newtheorem{definition}[theorem]{Definition}

%
%

%
\newcommand{\id}{\mathrm{id}}
\newcommand{\Id}{{{\mathchoice {\rm 1\mskip-4mu l} {\rm 1\mskip-4mu l}
{\rm 1\mskip-4.5mu l} {\rm 1\mskip-5mu l}}}}

\newcommand{\ol}{\overline}

\newcommand{\p}{\partial}

\newcommand{\om}{\omega}
\newcommand{\Om}{\Omega}
\newcommand{\eps}{\varepsilon}
\newcommand{\into}{\hookrightarrow}
\newcommand{\la}{\langle}
\newcommand{\ra}{\rangle}
\newcommand{\wt}{\widetilde}
\newcommand{\wh}{\widehat}

\newcommand{\Z}{{\mathbb{Z}}}
\newcommand{\R}{{\mathbb{R}}}
\newcommand{\C}{{\mathbb{C}}}

%

%


\renewcommand{\max}{{\rm max}}

\newcommand{\inn}{{\rm int\,}}

\newcommand{\pt}{{\rm pt}}

\newcommand{\Diff}{{\rm Diff}}
\newcommand{\Symp}{{\rm Symp}}
\newcommand{\Ham}{{\rm Ham}}
\newcommand{\std}{{\rm std}}
\newcommand{\lh}{{\rm lh}}

\newcommand{\FF}{\mathcal{F}}

\newcommand{\HH}{\mathcal{H}}

\renewcommand{\SS}{\mathfrak{S}}
%

%
%
%

\newcommand{\dd}[2]{\frac{\partial {#1}}{\partial {#2}}}

\newcommand{\fo}[1]{\mathcal{{#1}}}
\newcommand{\tv}[0]{\tilde{v}}

\newcommand{\ostd}[0]{\sigma_\std}   

\newcommand{\lstd}[0]{\lambda_{std}}
%
%

\begin{document}

\maketitle

\begin{abstract}
\noindent
We prove that a monotone Lagrangian torus in $S^2\times S^2$ which
suitably sits in a symplectic fibration with two sections in its
complement is Hamiltonian isotopic to the Clifford torus. 
\end{abstract}

\section{Introduction}\label{intro}

The classification of Lagrangian submanifolds in symplectic manifolds
up to Lagrangian or Hamiltonian isotopy is an intriguing 
problem of symplectic topology. While there are many tools for
distinguishing Lagrangian submanifolds, actual classification results 
have been very rare and restricted to special manifolds in dimension
$4$. 
The first circle of results concerns Lagrangian $2$-spheres, in which
case the two notions of isotopy coincide: There is a unique
$2$-sphere up to Hamiltonian isotopy in $S^2\times S^2$
(Hind~\cite{Hind04}), in $T^*S^2$ and some other Stein 
surfaces (Hind~\cite{Hind10}), and in certain del Pezzo surfaces 
(Evans~\cite{jonny}). 
The second circle of results is contained in A.~Ivrii's PhD
thesis~\cite{Ivrii}: It asserts the uniqueness up to Lagrangian isotopy
of Lagrangian tori in $\mathbb{R}^4$, $S^2\times S^2$,
$\mathbb{CP}^2$, and $T^\ast \mathbb{T}^2$.
 
Motivated by Ivrii's thesis, we address in this paper the question of
{\em Hamiltonian} unknottedness of monotone Lagrangian tori in
$S^2\times S^2$. 
Recall that a Lagrangian torus is called {\em monotone} if its Maslov
class is a positive multiple of its symplectic area class on relative
$\pi_2$. The product of the equators in each $S^2$-factor in
$S^2\times S^2$ is called the {\em standard Lagrangian torus}
$L_\std$, or the {\em Clifford torus}. This torus is monotone for the
standard split symplectic form $\om_\std=\ostd\oplus \ostd$.
There have been many constructions of monotone Lagrangian tori in
$(S^2\times S^2,\om_\std)$ that are not Hamiltoniian
isotopic to $L_\std$ due to Eliashberg--Polterovich~\cite{ElP},
Chekanov--Schlenk~\cite{CS}, 
Entov--Polterovich~\cite{EnP}, Biran--Cornea~\cite{BC},
Fukaya--Oh--Ohta--Ono~\cite{FOOO}, and Albers--Frauenfelder~\cite{AF}. 
Since they were all recently shown to be Hamiltonian isotopic to each 
other~\cite{Gad,OU}, we will collectively refer to them as the {\em
  Chekanov-Schlenk torus} $L_{CS}$. 

The following definition is implicit in Ivrii's thesis. Let us call a 
monotone Lagrangian torus $L$ in $(S^2\times S^2,\om_\std)$
{\em fibered} if there exists a foliation $\fo{F}$ of $S^2\times
S^2$ by symplectic $2$-spheres in the homology class $[pt\times S^2]$
and a symplectic submanifold $\Sigma$ in class $[S^2\times pt]$ with
the following properties: 
\begin{itemize}
 \item $\Sigma$ is transverse to the leaves of $\fo{F}$ and is
   disjoint from $L$; 
\item the leaves of $\fo{F}$ intersect $L$ in circles (or not at all).
\end{itemize}
Note that each leaf of $\fo{F}$ which intersects the torus $L$ is cut
by $L$ into two closed disks glued along $L$. The disks that intersect
$\Sigma$ together form a solid torus $T$ with $\partial T=L$. 

The first part of Ivrii's thesis now asserts that {\em any
  monotone Lagrangian torus in $S^2\times S^2$ is fibered}.  
In this paper, we prove\footnote{
Let us emphasize that, while inspired by it, the results in this paper
are independent of Ivrii's thesis (which is, unfortunately, neither
published nor otherwise accessible).} 

\begin{thm}[Main Theorem]\label{thm:main}
Let $L\subset (S^2\times S^2,\om_\std)$ be a monotone
Lagrangian torus which is fibered by $\fo{F}$ and $\Sigma$. 
Assume in addition, that there exists a second symplectic submanifold
$\Sigma'$ in homology class $[S^2\times pt]$ which is transverse to
the leaves of $\fo{F}$, and which is disjoint from $\Sigma$ and
$T$. Then $L$ is Hamiltonian isotopic to the standard torus $L_\std$. 
\end{thm}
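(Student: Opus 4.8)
The plan is to use the fibration structure to put $L$ into a normal form via a Hamiltonian isotopy, then recognize that normal form as $L_\std$. First I would exploit the foliation $\fo{F}$ together with the two sections $\Sigma$ and $\Sigma'$ to construct a symplectomorphism of $S^2\times S^2$ that identifies $\fo{F}$ with the standard product foliation $\{pt\times S^2\}$ and sends $\Sigma,\Sigma'$ to two distinct fibers $\{p\}\times S^2$ is the wrong picture — rather, to two distinct constant sections $S^2\times\{q_0\}$ and $S^2\times\{q_1\}$. The point is that, since all leaves of $\fo{F}$ lie in class $[pt\times S^2]$ and the space of such symplectic spheres through a given point is contractible (Gromov, via the standard $J$-holomorphic sphere analysis in $S^2\times S^2$), the leaf space of $\fo{F}$ is a sphere $B$, the projection $\pi\colon S^2\times S^2\to B$ is a symplectic fibration, and $\Sigma,\Sigma'$ are symplectic sections. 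Using that $\Sigma\cap\Sigma'=\emptyset$ and a Moser/Banyaga argument fiberwise, one trivializes the fibration compatibly with both sections, obtaining a symplectomorphism $\Phi$ with $\Phi(\fo{F})=\{pt\times S^2\}$, $\Phi(\Sigma)=S^2\times\{q_0\}$, $\Phi(\Sigma')=S^2\times\{q_1\}$. After composing with an element of $\Symp(S^2,\ostd)$ on the second factor we may assume $q_0,q_1$ are the south and north poles.

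Next I would analyze the image of $L$. By hypothesis each leaf of $\fo{F}$ meets $L$ in a circle or not at all, so $\Phi(L)$ is a Lagrangian torus that is a \emph{circle bundle} over its image $\gamma\subset B=S^2$ in the base: for each $b\in\gamma$, $\Phi(L)\cap(\{b\}\times S^2)$ is an embedded circle $C_b$ in the sphere $\{b\}\times S^2$, and these sweep out $\Phi(L)$ as $b$ traverses the circle $\gamma$. The solid torus $T$ becomes the union over $b\in\gamma$ of the disk in $\{b\}\times S^2$ bounded by $C_b$ on the side of $q_1\notin\Sigma=S^2\times\{q_0\}$... here I must be careful: $\Sigma$ is disjoint from $L$ and from $T$, so $q_0$ lies on the side \emph{opposite} to $T$; and $\Sigma'$ is also disjoint from $\Sigma$ and $T$, so $q_1$ also lies outside $T$, i.e. on the \emph{same} side as $q_0$. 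This forces the circles $C_b$ to separate the two poles from $T$, hence the complementary disk containing $T$ avoids both poles. The monotonicity of $L$ then pins down the symplectic areas: each $C_b$ must bound a disk of area exactly half the fiber area (the Lagrangian condition plus the monotonicity constant computation, exactly as in the identification of $L_\std$), so each $C_b$ is the equator of $\{b\}\times S^2$. Simultaneously, the Lagrangian condition with respect to the base symplectic form forces $\gamma$ to be a curve in $B=S^2$ enclosing area half the total — again the equator after a further Hamiltonian isotopy of the base, which lifts to a Hamiltonian isotopy of $S^2\times S^2$ preserving the fibration.

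The last step is to upgrade these normalizations to a single \emph{Hamiltonian} isotopy carrying $\Phi(L)$ to $L_\std$ and to check that $\Phi$ itself — or rather the composition of all the symplectomorphisms used — can be taken to be Hamiltonian. For $\Phi$ this follows from $\pi_0\Symp(S^2\times S^2,\om_\std)$ being generated by the swap of factors (Gromov), together with the fact that the swap can be excluded or absorbed using the asymmetry between $\fo{F}$ (fibers) and the section classes, and $\pi_1\Symp$ acting so that the fibered symplectomorphisms we built lie in the identity component. For the torus, once $\Phi(L)$ fibers over the equator $\gamma_0\subset B$ with every fiber-circle $C_b$ equal to the equator of its sphere, a fiberwise rotation interpolating the ``phase'' of $C_b$ to a constant is generated by a fiberwise Hamiltonian, straightening $\Phi(L)$ to $\gamma_0\times(\text{equator})=L_\std$. \emph{The main obstacle I anticipate} is precisely this control of the symplectic mapping class group and $\pi_1$: ensuring that the trivialization $\Phi$ respecting both sections, and the subsequent normalizations, compose to a Hamiltonian — as opposed to merely symplectic — diffeomorphism. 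This is where the hypothesis of a \emph{second} section $\Sigma'$ disjoint from $\Sigma$ and $T$ is essential: it rigidifies the fibration enough to kill the ambiguity (the two disjoint sections bound a trivial $S^2$-bundle with a canonical product structure up to Hamiltonian isotopy), which is the feature that distinguishes the Clifford torus from the Chekanov–Schlenk torus, the latter being fibered but \emph{not} admitting such a second section.
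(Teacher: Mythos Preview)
Your proposal has a genuine gap at the first and most critical step. You claim that ``a Moser/Banyaga argument fiberwise'' produces a \emph{symplectomorphism} $\Phi$ of $(S^2\times S^2,\om_\std)$ carrying $\fo{F}$ to the product foliation and $\Sigma,\Sigma'$ to constant sections. A fiberwise Moser argument yields only a fibre-preserving diffeomorphism that is symplectic on each fibre; it does not give a symplectomorphism of the total space. The obstruction is precisely the holonomy of the symplectic connection that $\om_\std$ induces on the fibration $\fo{F}$: a symplectomorphism to the split model exists only if this holonomy is trivial, and nothing in the hypotheses guarantees that. Indeed, if your $\Phi$ existed for free, then (as you implicitly use later) $\Phi(L)$ would be generated by the trivial parallel transport of $\om_\std$ on $\fo{F}_\std$ and hence already equal to a product $\gamma\times C$, and the theorem would be nearly immediate---but then the Chekanov--Schlenk torus, which \emph{is} fibered with one section $\Sigma$, would also be standard.

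This is exactly the difficulty the paper confronts. Its route is the opposite of yours: first pull back by a \emph{diffeomorphism} (not a symplectomorphism) to make $(\fo{F},L,\Sigma,\Sigma')$ standard at the cost of replacing $\om_\std$ by an unknown form $\wt\om$; then deform $\wt\om$ through a homotopy of relative symplectic fibrations to kill the holonomy along all circles of latitude. The key technical step is Lalonde--McDuff inflation, and here is where the second section genuinely enters: inflating along a fibre and a single section would destroy monotonicity of $L_\std$, so one inflates symmetrically along $S_0$ and $S_\infty$. Your diagnosis that $\Sigma'$ is needed to control $\pi_0$ or $\pi_1$ of the symplectomorphism group is off the mark; Gromov's theorem handles that part, and it is invoked only at the very end to convert the deformation (through varying symplectic forms) into a homotopy with fixed form $\om_\std$. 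Finally, a small geometric slip: $\Sigma$ is not disjoint from $T$---the solid torus $T$ is \emph{defined} as the union of the fibre-disks meeting $\Sigma$---so your placement of $q_0$ and $q_1$ on the same side of the circles $C_b$ is incorrect; they lie on opposite sides, which is what makes the symmetric inflation possible.
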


This means that the Chekanov--Schlenk torus $L_{CS}$, or any other
exotic monotone Lagrangian torus, cannot possess the additional
section $\Sigma'$ required in Theorem~\ref{thm:main}. It also suggests 
that the classification of monotone Lagrangian tori in $(S^2\times
S^2,\om_\std)$ up to Hamiltonian isotopy may come within
reach once we understand better the role of the second section
$\Sigma'$. 

\begin{remark}
It is tempting to conjecture that the Clifford torus and the
Chekanov--Schlenk torus are the only monotone Lagrangian tori in
$(S^2\times S^2,\om_\std)$ up to Hamiltonian isotopy. We
point out that R.~Vianna~\cite{Via1,Via2} recently constructed
infinitely many pairwise Hamiltonian non-isotopic Lagrangian tori in
$\C P^2$, and conjectured the existence of a monotone Lagrangian tori
in $(S^2\times S^2,\om_\std)$ which is not Hamiltonian
isotopic to either $L_\std$ or $L_{CS}$. 
\end{remark}

Let us now outline the proof of the main theorem, and in particular
explain where the second section is needed. By a \emph{relative
  symplectic fibration} on $S^2\times S^2$ we will mean a quintuple 
$$
   \SS=(\fo{F},\omega,L,\Sigma,\Sigma'),
$$ 
as in Theorem~\ref{thm:main}, only with the standard form $\om_\std$
replaced by any symplectic form $\om$ cohomologous to $\om_\std$. 
We will prove (Corollary~\ref{cor:fib}) that for every symplectic
fibration $\SS$ with $\om=\om_\std$ there exists a homotopy of
relative symplectic fibrations
$\SS_t=(\fo{F}_t,\om_\std,L_t,\Sigma_t,\Sigma_t')$ with 
fixed symplectic form $\om_\std$ such that 
$\SS_0=\SS$ and $\SS_1=\SS_{std}:=(\fo{F}_\std,\omega_\std,L_\std,S_0,S_\infty)$,
where $\FF_\std$ denotes the standard foliation with leaves
$\{z\}\times S^2$ and $S_0=S^2\times\{S\}$, $S_\infty=S^2\times\{N\}$
are the standard sections at the south and north pole. Then $L_t$ is
an isotopy of monotone Lagrangian tori with respect to $\om_\std$ from
$L$ to $L_\std$, which is Hamiltonian by Banyaga's isotopy extension
theorem.  

A relative symplectic fibration $\SS$ gives rise to a symplectic
fibration $p:S^2\times S^2\to\Sigma$ by sending each leaf of $\FF$ to
its intersection point with $\Sigma$. It determines a symplectic
connection whose horizontal subspaces are the symplectic orthogonal
complements to the fibres. Parallel transport along closed paths
$\gamma:[0,1]\to\Sigma$ gives holonomy maps which are
symplectomorphisms of the fibre $p^{-1}(\gamma(0))$ and measure the
non-integrability of the horizontal distribution. It is not hard to
show that a symplectic fibration $\SS$ with trivial holonomy around
all loops is diffeomorphic to $\SS_\std$, and a theorem of Gromov
implies that they are actually homotopic with fixed symplectic form if
they both have symplectic form $\om_\std$. 

Thus, most of the work will go into deforming a given relative
symplectic fibration $\SS$ to one with trivial holonomy. 
After pulling back $\SS$ by a diffeomorphism, we may assume that
$(\FF,L,\Sigma,\Sigma')=(\FF_\std,L_\std,S_0,S_\infty)$ (but the
symplectic form $\om$ is nonstandard). 
In the first step, which takes up Section~\ref{sec:stand},
we make the holonomy trivial near the two sections and near the fibres
over the line of longitude through Greenwich in the base, see Figure~\ref{fi1}. 

\begin{figure}[h!]
 \centering
 \psfrag{G}{Greenwich}
\psfrag{S}{$\Sigma'$}
\psfrag{Sd}{$\Sigma$}
\psfrag{L}{$L$}
\psfrag{bases}{Fibre}
\psfrag{base}{Base}
 \includegraphics[width=10cm]{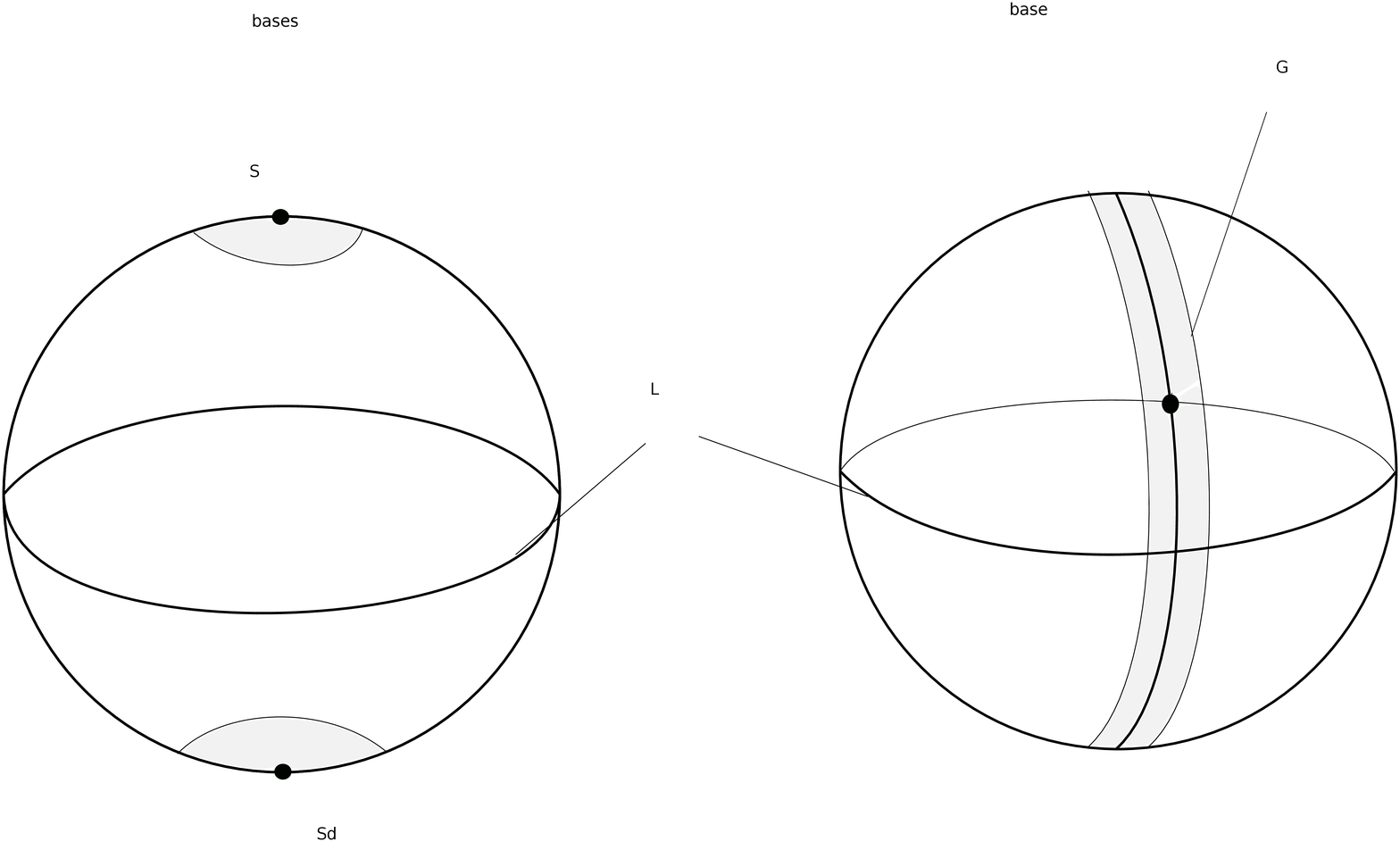}
 \caption{Where the holonomy is trivial after the first step}
 \label{fi1}
\end{figure}

In the second step, which takes up most of Section~\ref{sec:killholonomy},
we kill the holonomy along all circles of latitude $C^\lambda$. 
For this, let $(\lambda,\mu)$ be spherical coordinates on $S^2$,
where $\lambda$ denotes the latitude and $\mu$ denotes the longitude. 
After the first step, the holonomy maps $\phi^\lambda$ along $C^\lambda$ give
a loop in $\Symp(A,\partial A,\ostd)$, the group of symplectomorphisms
of the annulus (the sphere minus two polar caps) which equal the
identity near the boundary. Since the fundamental group of
$\Symp(A,\partial A,\ostd)$ vanishes, we can contract the loop of inverses
$\psi^\lambda=(\phi^\lambda)^{-1}$ and obtain a family of Hamiltonians
$H^\lambda_\mu$ which generates the contraction. The closed $2$-form 
$$
   \Omega_H=\omega+d(H^\lambda_\mu d\mu)
$$
then defines a symplectic connection with trivial holonomy around all 
$C^\lambda$. However, $\Omega_H$ need not be symplectic
if $\dd{H^\lambda_\mu}{\lambda}$ is large. This can be remedied by the
inflation procedure due to Lalonde and McDuff~\cite{LalMcD}. 
In this procedure, the symplectic form $\om$ is deformed along a fibre
and a section (and $H$ suitably rescaled) in order to make $\Om_H$
symplectic. However, this process will in general destroy monotonicity
of $L_\std$. In order to keep the Lagrangian torus monotone, we
perform the inflation procedure along a fibre and the {\em two}
sections $S_0,S_\infty$ in a symmetric way. It is at this point of the
proof that we need the existence of a second symplectic section.

Once the holonomy along circles of latitude is trivial, in the third
and final step (at the end of Section~\ref{sec:killholonomy}) we
deform the symplectic form to the standard form. 
This finishes the outline of the proof.

\begin{remark}
The idea to apply the results of Ivrii's thesis to the Hamiltonian
classification of monotone tori in $S^2\times S^2$ originated in 2003
in the first author's discussions with Y.~Eliashberg. However, at the
time we did not realize the necessity of a second symplectic section
and were puzzled by the apparent contradiction between this result and
the existence of an exotic monotone torus in $S^2\times S^2$. 
This discrepancy was resolved in the second author's PhD
thesis~\cite{Sch}, of which this article is a shortened version. 
\end{remark}

\bigskip

\centerline{\bf Acknowledgement}

We thank Y.~Eliashberg for many fruitful discussions and his
continued interest in this work.

\section{Relative symplectic fibrations}\label{fib}

\subsection{Symplectic connections and their holonomy}
Consider a smooth fibration (by which we mean a fibre bundle) $p:M\to
B$ and a closed $2$-form $\om$  
on $M$ whose restriction to each fibre $p^{-1}(b)$ is
nondegenerate. We will refer to $\om$ as a {\em symplectic connection}
on $M$.\footnote{This terminology differs slightly from the one in~\cite{MS}.} 
From the next subsection on we will assume $\om$ to be symplectic, but
for now this is not needed. 

{\bf Parallel transport. }
Since $\om$ is nondegenerate on the fibres, the $\om$-orthogonal
complements  
$$
   \HH_x := (\ker d_xp)^\om
$$
to the tangent spaces of the fibres of $p$ define a distribution of
horizontal subspaces $\HH$ such that 
$$
   TM = \HH\oplus \ker dp.  
$$
Horizontal lifts of a path $\gamma:[0,1]\to B$ with given initial
points in $p^{-1}(\gamma(0))$ give rise to the {\em parallel transport}
$$
   P_\gamma\colon p^{-1}(\gamma(0)) \to p^{-1}(\gamma(1))
$$ 
along $\gamma$. Closedness of $\om$ inplies that $P_\gamma$ is
symplectic, i.e.
$$
   P_\gamma^\ast \omega_{\gamma(1)}=\omega_{\gamma(0)},
$$ 
where $\omega_b$ denotes the symplectic form $\omega|_{p^{-1}(b)}$.

{\bf Holonomy. }
The parallel transport $P_\gamma:p^{-1}(\gamma(0))\to p^{-1}(\gamma(0))$
along a closed curve $\gamma:[0,1]\to B$ is called the {\em holonomy}
of $\om$ along the loop $\gamma$. If $P_\gamma=\id$ for each loop
$\gamma$ we say that $\om$ has {\em trivial holonomy}. In this case
parallel transport along any (not necessarily closed) curve depends
only on the end points, so we can use parallel transport to define
local trivializations of $p:M\to B$.

\begin{remark}
There is a natural notion of {\em curvature} of a symplectic connection,
see~\cite{MS}. This is a $2$-form on the base with values in the
functions on the fibres which measures the nonintegrability of the
horizontal distribution. For simply connected base (which is the case
of interest to us) the curvature and the holonomy carry the same
information, so in this paper we will phrase everything in terms of
holonomy.  
\end{remark}

{\bf From foliations to fibrations. }
More generally, we can consider a closed $2$-form $\om$ on $M$ whose
restriction to the leaves of a smooth foliation $\FF$ of $M$ is
nondegenerate. If all leaves of $\FF$ are compact,
then the space of leaves is a smooth manifold $B$ and the canonical
projection $M\to B$ is a fibration, so we are back in the situation
of a symplectic connection as above. Since in our case all leaves will
be $2$-spheres, we can switch freely between the terminologies of
foliations and fibrations.

\subsection{Fibered Lagrangian tori in $S^2\times S^2$}

Suppose now that $(M,\om)$ is a symplectic $4$-manifold and $p:M\to B$
is a symplectic fibration over a surface $B$ (i.e., the fibres are
symplectic surfaces). 

\begin{definition}\label{DLfibered}
We say that an embedded $2$-torus $L\subset M$ is \emph{fibered by
  $p$} if (see Figure~\ref{f3})
\begin{enumerate}
\item $\gamma:=p(L)$ is an immersed loop with transverse
  self-intersections which are at most double points; 
\item $p^{-1}(\gamma(t))\cap L$ is diffeomorphic to a circle if
  $\gamma(t)$ is not a double point, and to two disjoint circles if
  $\gamma(t)$ is a double point;
\item in each of the circles in $p^{-1}(\gamma(t))\cap L$ we can fill
  in an embedded disk $D\subset p^{-1}(\gamma(t))$ in the fibre such
  that the two disks at a double point are disjoint and all the disks
  form a solid torus $T\cong S^1\times D^2$ with $L$ as its boundary. 
\end{enumerate}
\end{definition}

\begin{figure}[h!]
 \centering
\psfrag{L}{$L=\partial T\subset M$}
\psfrag{p}{$p$}
\psfrag{B}{$B$}
\psfrag{pcupT}{$p^{-1}(\gamma(t))\cap T$}
\psfrag{gamma}{$\gamma$}
\psfrag{x}{$\gamma(t)$}
\psfrag{a}{$p^{-1}(\gamma(t))$}
\includegraphics[width=10cm]{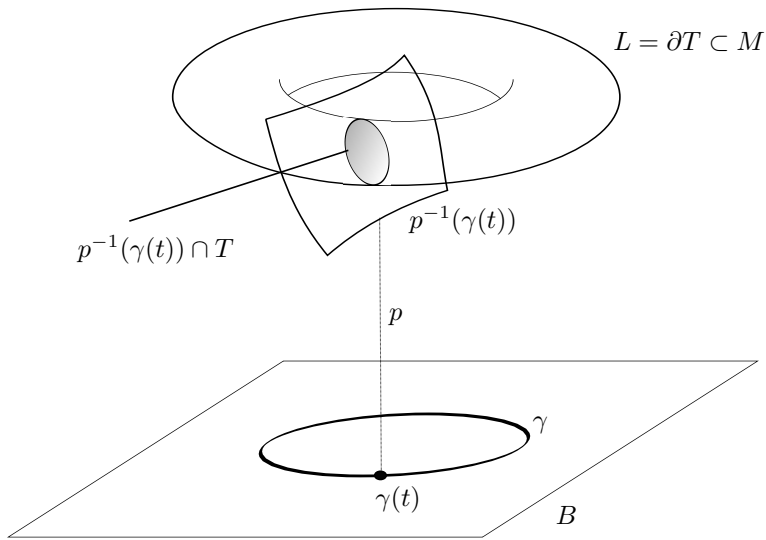}
\caption{A fibered Lagrangian torus}
\label{f3}
\end{figure}

Suppose now that $L$ is in addition Lagrangian. The following two
results are the basis for most of the sequel. The first one
states that a fibered Lagrangian torus $L$ is generated by parallel
transport along $\gamma$ of the circle in the fibre over a non-double
point, see Figure~\ref{f4}.

\begin{figure}[h!]
\centering
\psfrag{pi}{$p$}
\psfrag{g}{$\gamma$}
\psfrag{g1}{$\tilde{\gamma}_{z_1}$}
\psfrag{g2}{$\tilde{\gamma}_{z_2}$}
\psfrag{g3}{$\tilde{\gamma}_{z_3}$}
\psfrag{g4}{$\tilde{\gamma}_{z_4}$}
\psfrag{F1}{$p^{-1}(\gamma(t_0))$}
\psfrag{F2}{$p^{-1}(\gamma(t_1))$}
\psfrag{a}{$p^{-1}(\gamma(t_0))\cap L$}
\psfrag{b}{$p^{-1}(\gamma(t_1))\cap L$}
\psfrag{B}{$B$}
 \includegraphics[width=10cm]{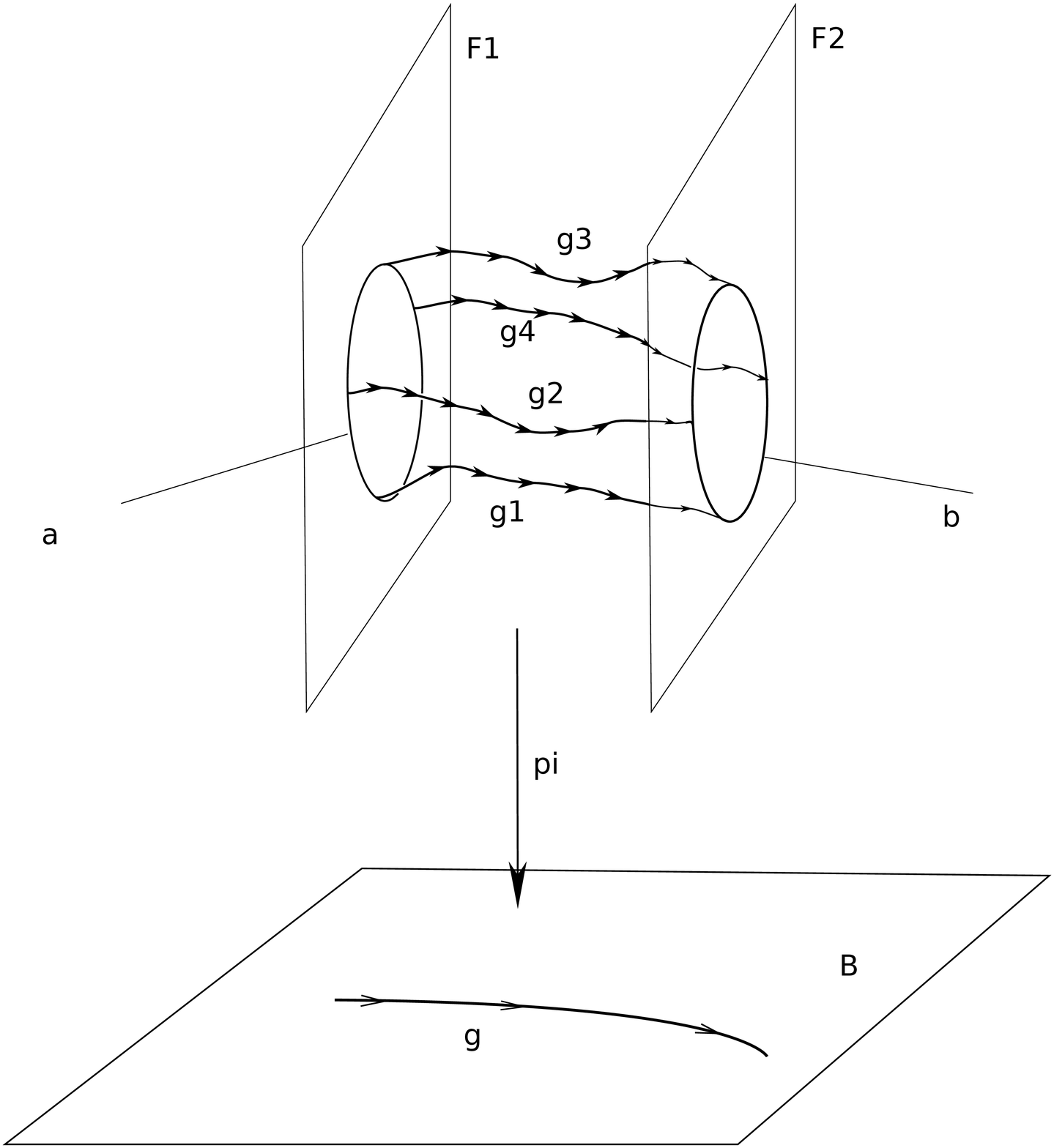}
\caption{L is generated by parallel transport}
\label{f4}
\end{figure}

\begin{proposition}\label{PPTlagtorusfibreposition}
Let $L\subset M$ be an embedded Lagrangian torus which is fibered by
the symplectic fibration $p \colon M \to B$. Then $L$ is invariant
under parallel transport along $\gamma=p(L)$ with respect to the
symplectic connection $\om$. 
\end{proposition}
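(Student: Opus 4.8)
The plan is to show that the horizontal lift of $\gamma$ through any point of $L$ stays inside $L$, so that $L$ is swept out by parallel transport of a single fibre-circle. Fix a point $x_0\in L$ lying in a fibre $p^{-1}(\gamma(t_0))$ over a non-double point, and let $c_0\subset p^{-1}(\gamma(t_0))$ be the circle $p^{-1}(\gamma(t_0))\cap L$ containing $x_0$. The key observation is that, since $L$ is Lagrangian and $p$ is a symplectic fibration, the symplectic orthogonal $\HH_x=(\ker d_xp)^\om$ at a point $x\in L$ can be compared with $T_xL$: the tangent space $T_xL$ contains the vertical line $T_x c_0=T_xL\cap\ker d_xp$ (by fibredness, $L$ meets the fibre transversally in the circle $c_0$), and its image $d_xp(T_xL)$ is the line $T_{\gamma(t_0)}\gamma$ spanned by $\dot\gamma$.

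First I would prove the infinitesimal statement: the horizontal lift $\widetilde{\dot\gamma}(x)$ of $\dot\gamma(t_0)$ at a point $x\in c_0$ is tangent to $L$. Write $v=\widetilde{\dot\gamma}(x)\in\HH_x$, and pick any $w\in T_xL$. Decompose $w=w^{\mathrm{vert}}+w^{\mathrm{hor}}$ along $TM=\ker dp\oplus\HH$; then $\om(v,w)=\om(v,w^{\mathrm{vert}})$ because $\HH$ is a symplectic subspace only after we note $v\perp_\om \ker dp$ by definition of $\HH$ — more precisely $v\in\HH_x=(\ker d_xp)^\om$ gives $\om(v,w^{\mathrm{vert}})=0$, so $\om(v,w)=\om(v,w^{\mathrm{hor}})$. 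Now $w^{\mathrm{hor}}$ projects under $dp$ into $T_{\gamma(t_0)}\gamma=\R\dot\gamma(t_0)$, hence $w^{\mathrm{hor}}$ is a scalar multiple of $v$ modulo $\ker dp$, i.e. $w^{\mathrm{hor}}=\lambda v + u$ with $u\in\ker d_xp$; then $\om(v,w^{\mathrm{hor}})=\om(v,u)=0$ again by $v\in(\ker d_xp)^\om$. Therefore $v\in (T_xL)^\om$. Since $L$ is Lagrangian, $(T_xL)^\om=T_xL$, so $v\in T_xL$. This shows the horizontal distribution restricted to the circles $c$ of $L$ over non-double points is everywhere tangent to $L$.

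Next I would integrate this. Over the (immersed) loop $\gamma$, away from the finitely many double points, parallel transport of $c_0$ along $\gamma$ produces an integral surface of the rank-$2$ distribution $\mathrm{span}(\ker dp\cap TL,\ \widetilde{\dot\gamma})$, which by the previous paragraph coincides with $TL$ over that surface; hence the parallel-transported circles trace out an open dense subset of $L$, and by continuity (and the fact that $L$ is closed and the parallel transport maps are defined for all time on the compact sphere fibres) the image is all of $L$ away from fibres over double points, and then all of $L$. Equivalently: the subset of $L$ consisting of points whose $p$-image is a non-double point is connected and invariant under the local flow given by horizontal lift of $\gamma$, and its closure is $L$; so $L$ is invariant under parallel transport along $\gamma$.

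The main obstacle is the bookkeeping at the double points of $\gamma$ and making the "integrate the distribution" step rigorous: one must check that the horizontal lift of $\gamma$ starting on $c_0$ never leaves the union of the circles $p^{-1}(\gamma(t))\cap L$ — in particular, when $\gamma$ passes through a double point, the lift starting on a given one of the two circles returns to a circle on the \emph{same} sheet of the solid torus $T$, which is exactly what item (iii) of Definition~\ref{DLfibered} (disjointness of the two disks forming $T$, with $\partial T=L$) is designed to guarantee. I would handle this by working on the immersed loop $\gamma$ upstairs (i.e. parametrizing $[0,1]\to B$ and lifting), where there are no double points, and only at the very end pushing the conclusion down to the image in $M$, using that $L$ and $T$ are embedded. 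The Lagrangian condition enters only in the infinitesimal step above; everything else is the geometry of the fibration and the definition of "fibered."
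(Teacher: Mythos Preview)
Your proof is correct and follows essentially the same approach as the paper: both establish the infinitesimal statement that the horizontal lift of $\dot\gamma$ lies in $T_xL$ by combining the Lagrangian condition $(T_xL)^\om=T_xL$ with the definition $\HH_x=(\ker d_xp)^\om$. The paper's version is a bit more compact---it observes directly that $T_xL\cap\HH_x=(T_xL)^\om\cap(V_x)^\om=(T_xL+V_x)^\om$ is $1$-dimensional and hence must be the horizontal lift of $T_{\gamma(t)}\gamma$---and it omits your integration discussion (which, while not wrong, is more than is needed once the horizontal lift is everywhere tangent to $L$).
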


\begin{proof}
At a point $x\in L$ we have the $\om$-orthogonal splitting
$T_xM=\HH_x\oplus V_x$, where $V_x:=\ker d_xp$ denotes the tangent
space to the fibre. Since $L$ is fibered by $p$, the subspace
$T_xL+V_x\subset T_xM$ generated by $T_xL$ and $V_x$ is
$3$-dimensional. The condition that $L$ is Lagrangian implies
$(T_xL)^\om=T_xL$, thus $T_xL\cap \HH_x = (T_xL)^\om\cap
(V_x)^\om = (T_xL+V_x)^\om$ is 1-dimensional. This $1$-dimensional
subspace therefore contains the horizontal lift of $\dot\gamma$
through $x$ and the propsition follows. 
\end{proof}

\begin{remark}
Let $N:=p^{-1}(\gamma)$ be the $3$-dimensional submanifold of $M$
formed by the fibres that meet the torus $L$. In the 
definition of being fibered by $p$ we did not require the torus to be
transverse to the fibres of $p$ in $N$. If, however, $L$ is Lagrangian,
then Proposition~\ref{PPTlagtorusfibreposition}
shows that we get this property for free.
\end{remark}

{\bf Monotone tori in $S^2\times S^2$. }
From now on we assume that
$$  
   M = S^2\times S^2
$$
and the symplectic form $\om$ is cohomologous to the product form 
$$
   \om_\std:=\ostd\oplus\ostd, 
$$
where $\ostd$ is the standard area form on $S^2$ normalized by
$\int_{S^2}\ostd=1$. In other words, we require that
$$
   \int_{S^2\times\pt}\om=\int_{\pt\times S^2}\om=1. 
$$
Moreover, we assume that the Lagrangian torus $L$ is {\em monotone},
i.e., its Maslov class $\mu$ (see~\cite{MS}) and its symplectic area satisfy
$$
   \mu(\sigma)=4\int_\sigma\om \text{ for all }\sigma\in\pi_2(M,L).
$$
Here the monotonicity constant must be equal to $4$ because the class
$A=[S^2\times\pt]\in\pi_2(M)$ has Maslov index $\mu(A)=2\la
   c_1(TM),A\ra = 2\la c_1(TS^2),[S^2]\ra = 4$.

\begin{lemma}\label{Lmonotonetorusembcurve}
Let $L\subset (M=S^2\times S^2,\omega)$ be a monotone Lagrangian torus
with $\omega$ cohomologous to $\omega_\std$. Let $p \colon M \to
B$ be a symplectic fibration over the surface $B$ such that $L$
is fibered by $p$. Then the loop $\gamma:=p(L)$ is an embedded
curve, i.e., it has no double points.  
\end{lemma}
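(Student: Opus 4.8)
The plan is to suppose for contradiction that $\gamma=p(L)$ has a double point $\gamma(t_0)=\gamma(t_1)$, and to extract from the two disk families filling $L$ a collection of spheres and disks whose areas violate monotonicity. First I would use Proposition~\ref{PPTlagtorusfibreposition}: since $L$ is invariant under parallel transport along $\gamma$, every leaf $p^{-1}(\gamma(t))$ meeting $L$ is cut by $L$ into two disks, and (by item (iii) of Definition~\ref{DLfibered}) exactly one disk from each leaf belongs to the solid torus $T$. Call the corresponding disk $D_t^-\subset T$ and the complementary disk $D_t^+=p^{-1}(\gamma(t))\setminus D_t^-$, so that $D_t^-\cup D_t^+$ is the full fibre sphere, a representative of the class $F=[\pt\times S^2]$ with $\int_F\om=1$. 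Because parallel transport is symplectic, the area $\area_\om(D_t^-)$ is independent of $t$ (as long as $\gamma(t)$ is not a double point, and by continuity through double points on each branch); denote this common value by $a$, so $\area_\om(D_t^+)=1-a$.

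Next I would examine the double point. Over $\gamma(t_0)=\gamma(t_1)$ the fibre is a single sphere containing two disjoint circles of $L$, hence four disks; the two disks $D_{t_0}^-$ and $D_{t_1}^-$ inside $T$ are disjoint embedded disks in this one fibre sphere, so their areas add up to at most $1$, i.e. $2a\le 1$; similarly the two outside disks give $2(1-a)\le 1$. Hence $a=\tfrac12$: each half-fibre has area exactly $\tfrac12$. Now I would build a sphere in class $A=[S^2\times\pt]$ or a disk with boundary on $L$ whose Maslov index and area are incompatible. The cleanest route: take the subfamily of disks $D_t^-$ over the loop $\gamma$; tracing one circle of $L$ around $\gamma$ it returns to itself (parallel transport is the identity on $L$ as a set, since $L$ is invariant and the circle over $\gamma(0)$ is sent to the circle over $\gamma(1)=\gamma(0)$), so these disks sweep out an annulus in $T$, but the monodromy of the circle bundle over $\gamma$ may be nontrivial; together with a bounding half of $L$ one obtains a sphere class. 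Its $\om$-area is $\int_\gamma$(area of $D_t^-$)-type quantity; meanwhile, using that $L$ fibers, this sphere is homologous to a combination of $A$ and $F$. Comparing $\int\om$ computed two ways — once as $\tfrac12$ (or a half-integer multiple) from the fibre disks and once as an integer (since $\langle[\om],A\rangle=\langle[\om],F\rangle=1$) — forces a contradiction, because a closed surface in $S^2\times S^2$ has integral $\om$-area but the fibered construction at a genuine double point produces a half-integral one.

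I would phrase the final contradiction through monotonicity of $L$ directly, which is more robust than chasing homology classes: pick a disk $\sigma\in\pi_2(M,L)$ obtained by capping one of the boundary circles of $D_{t_0}^-$ by part of $L$ and tracking it across the double point; compute $\mu(\sigma)$ and $\int_\sigma\om$. Using that $D_t^\pm$ glue to spheres of area $1$, the areas available are constrained to the lattice generated by $1$ and the disk areas, while the Maslov class lies in a lattice determined by $A$ and a generator of $H_2(M,L)$; monotonicity $\mu=4\int\om$ then pins the disk area to a specific value, and the equality $a=\tfrac12$ forced by the double point contradicts this pinned value (which must be a quarter-integer multiple matching $\mu\in 4\Z$ on closed-up classes but not on the half-fibre).

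The main obstacle I expect is organizing the homological bookkeeping cleanly: one must carefully identify the class in $H_2(M,L;\Z)$ (or $\pi_2(M,L)$) represented by the swept-out annulus-plus-half-torus near the double point, show its $\om$-area is genuinely forced to be half-integral by the geometry while its Maslov index is forced integral by $c_1(TM)$, and confirm no orientation or gluing subtlety makes the two disks at the double point actually overlap-compatible in a way that allows $a\ne\tfrac12$. A convenient technical device is to first note, as the remark after Proposition~\ref{PPTlagtorusfibreposition} points out, that Lagrangianity gives transversality of $L$ to the fibres for free, so all the disks $D_t^\pm$ are genuine smoothly-varying families and the area function is smooth; the rest is then a short computation with the monotonicity relation.
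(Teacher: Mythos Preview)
Your proposal has a genuine gap and, as written, never reaches a contradiction. The first concrete error is the claim that the two ``outside'' disks $D_{t_0}^+$ and $D_{t_1}^+$ are disjoint. Two disjoint embedded circles in $S^2$ cut it into three regions: two disks and one annulus. If $D_{t_0}^-$ and $D_{t_1}^-$ are the two (disjoint) disk regions, then each complementary disk $D_{t_i}^+=S^2\setminus D_{t_i}^-$ contains the annulus, so $D_{t_0}^+\cap D_{t_1}^+$ is exactly that annulus. The inequality $2(1-a)\le 1$ therefore does not follow, and you cannot conclude $a=\tfrac12$ this way. Even if you had $a=\tfrac12$, that is \emph{not} incompatible with monotonicity: via $\mu=4\int\omega$ it corresponds to Maslov index $2$, a perfectly good even integer. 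Your later attempt to squeeze out a contradiction from ``half-integral versus integral'' areas conflates the monotonicity constant $4$ with the (false) claim $\mu\in 4\Z$; orientability of $L$ only gives $\mu\in 2\Z$, so half-integral disk areas are exactly what monotonicity permits. The swept-annulus bookkeeping you sketch leads nowhere for the same reason.

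The paper's proof is much shorter and uses monotonicity at the start rather than the end. Since $L$ is orientable, every Maslov index is even, so monotonicity forces the area of any embedded symplectic disk with boundary on $L$ to be a positive multiple of $\tfrac12$, hence $\ge\tfrac12$. At a putative double point the fibre (of total area $1$) contains the two disjoint disks $D_{t_0}^-,D_{t_1}^-$ from the solid torus $T$; their union is a disconnected proper closed subset of the connected sphere, so its complement is open and nonempty and the total area of the two disks is strictly less than $1$. But each has area $\ge\tfrac12$, a contradiction. Your parallel-transport observation that $\area_\om(D_t^-)$ is constant in $t$ is correct but unnecessary; the whole argument happens in the single fibre over the double point.
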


\begin{proof}
Since $L$ is orientable, all its Maslov indices on $\pi_2(M,L)$ are
even integers. In view of the monotonicity constant $4$, this
implies that the symplectic area of each embedded symplectic disk $D\subset M$
with boundary on $L$ must be a positive multiple of $1/2$. If
$\gamma$ had a double point $b$, then the solid torus $T$ from
Definition~\ref{DLfibered} would intersect the fibre $p^{-1}(b)$ in
two disjoint symplectic disks, which is impossibe because the fibre
has symplectic area $1$.  
\end{proof}

\begin{remark}
(a) For a smooth fibration $p:S^2\times S^2\to B$ over a surface $B$,
  both the fibres and the base are diffeomorphic to $S^2$. Indeed,
  denoting a fibre by $F$, the homotopy exact sequence $\pi_2(F)\to
  \pi_2(S^2\times S^2)\to\pi_2(B)$ implies
  $\pi_2(F)\cong\pi_2(B)\cong\Z$, so $F$ and $B$ must be diffeomorphic
  to $S^2$ or $\R P^2$. Since by the product formula for the Euler
  characteristic $\chi(F)\chi(B)=\chi(S^2\times S^2)=4$, both $F$ and
  $B$ must be diffeomorphic to $S^2$. 

(b) For a {\em monotone} Lagrangian torus $L$ in $M=S^2\times S^2$,
  the third condition in Definition~\ref{DLfibered} is actually a
  consequence of the first two. To see this, note first that in the
  proof of Lemma~\ref{Lmonotonetorusembcurve} we can rule out the
  double point $b$ without reference to the solid torus $T$: By the
  Jordan curve theorem, the two circles in $L\cap p^{-1}(b)$ would
  bound two disjoint symplectic disks in the fibre $p^{-1}(b)\cong
  S^2$, each of area a positive multiple of $1/2$, which again gives the desired 
  contradiction. Now an orientation of $L$ and a parametrization of
  the curve $\gamma\subset B$ induce via horizontal lifts of
  $\dot\gamma$ orientations of the circles $L_t:=L\cap p^{-1}(\gamma(t))$,
  and we define $T$ as the union of the disks $D_t\subset
  p^{-1}(\gamma(t))$ whose oriented boundary is $L_t$.  
\end{remark}

\subsection{Relative symplectic fibrations of $S^2\times S^2$}

We continue with the manifold $M=S^2\times S^2$ and the generators
$$
   A=[S^2\times\pt],\;B=[\pt\times S^2]\in H_2(M).
$$
Now we define the main object of study for this paper. 

\begin{definition} \label{DAMSF}
A \emph{relative symplectic fibration} on $M=S^2\times S^2$ is a
quintuple $\SS=(\mathcal{F},\omega,L,\Sigma,\Sigma')$ where
\begin{itemize}
\item $\mathcal{F}$ is a smooth foliation of $M$ by $2$-spheres in homology class $B$;
\item $\omega$ is a symplectic form on $M$ making the leaves of
  $\mathcal{F}$ symplectic with $\omega(A)=\omega(B)=1$; 
\item $\Sigma,\Sigma'$ are disjoint symplectic submanifolds in class
  $A$ which are transverse to all the leaves of $\mathcal{F}$, so in
  particular the projection $p \colon M \to \Sigma$ sending each leaf
  to its unique intersection point with $\Sigma$ defines a symplectic fibration;
\item $L\subset M$ is an embedded monotone Lagrangian torus fibered by $p$;
\item $\Sigma'$ is disjoint from the solid torus $T$ with $\partial
  T=L$ in Definition~\ref{DLfibered}; 
\item $\Sigma$ intersects each fibre $p^{-1}(\gamma(t))$ in
  the interior of the disk $T\cap p^{-1}(\gamma(t))$. 
\end{itemize}
\end{definition}

Note that for a monotone Lagrangian torus $L$ fibered by $p:M\to B$
there always exist disjoint smooth sections $\Sigma,\Sigma'$ of $p$
with $\Sigma'$ disjoint from the solid torus $T$ and $\Sigma\cap
p^{-1}(\gamma)$ contained in the interior of $T$. The crucial
condition in Definition~\ref{DAMSF} is that these sections can be
chosen to be symplectic. 

\begin{definition}\label{DisoAMSF}
(a) A {\em homotopy} of relative symplectic fibrations is a smooth
1-parametric family 
$$
   \SS_t=(\mathcal{F}_t,\omega_t,L_t,\Sigma_t,\Sigma_t'),\qquad t\in[0,1]
$$
of relative symplectic fibrations.

(b) The group $\Diff_\id(M)$ of diffeomorphisms $\phi:M\to M$ inducing
the identity on the second homology group $H_2(M)$ (and hence on all
homology groups) acts on relative
symplectic fibrations by push-forward  
$$
   \phi(\SS) := \Bigl(\phi(\mathcal{F}),\phi_\ast
   \omega,\phi(L),\phi(\Sigma),\phi(\Sigma')\Bigr).
$$
Two relative symplectic fibrations $\SS$ and $\wt\SS$ are called
\emph{diffeomorphic} if $\wt\SS=\phi(\SS)$ for a diffeomorphism $\phi$
of $M$ (which then necessarily belongs to $\Diff_\id(M)$). 

(c) Two relative symplectic fibrations $\SS$ and $\wt\SS$ on $M$ are called
{\em deformation equivalent} if there exists a diffeomorphism
$\phi\in\Diff_\id(M)$ such that $\phi(\SS)$ is homotopic to $\wt\SS$. 
\end{definition}

\begin{remark}
(a) Note that a diffeomorphism $\phi\in\Diff_\id(M)$ intertwines the
  symplectic connections of $\SS$ and $\phi(\SS)$ and their parallel
  transports. For example, $\SS$ has trivial holonomy 
  iff $\phi(\SS)$ does.  

(b) It is easy to see that deformation equivalence is an equivalence
  relation. Moreover, $\SS$ is deformation equivalent to $\wt\SS$ iff
  there exists a homotopy $\SS_t$ such that $\SS_0=\SS$ and $\SS_1$ is
  diffeomorphic to $\wt\SS$.

(c) Note that in the above definition nothing is said about the
  isotopy class of the diffeomorphism $\phi$. In fact, it is an open
  problem whether every $\phi\in\Diff_\id(M)$ is isotopic to the
  identity, so we do not know whether diffeomorphic relative symplectic
  fibrations are homotopic in general. However, by a theorem of Gromov
  (see Theorem~\ref{G85} below), two diffeomorphic relative symplectic
  fibrations {\em with the same symplectic form $\om_\std$} are homotopic. 
  This result will be crucial at the end of the proof of our main
  theorem. 
\end{remark}

\subsection{The standard relative symplectic fibration}

The {\em standard relative symplectic fibration}
$\SS_\std=(\FF_\std,\om_\std,L_\std,S_0,S_\infty)$ of $S^2\times S^2$
consists of the following data:
\begin{itemize}
\item $\FF_\std$ is the foliation by the fibres $\{z\}\times S^2$ of
  the projection $p_1:S^2\times S^2\to S^2$ onto the first factor;
\item $\om_\std=\ostd\oplus\ostd$ is the standard symplectic form;
\item $S_0=S^2\times \{S\}$ and $S_\infty=S^2\times\{N\}$, where
  $N,S\in S^2$ are the north and south poles;
\item $L_\std=E\times E$ is the Clifford torus, i.e.~the product of
  the equators in the base and fibre;
\item $T_\std=E\times D_\lh$, where $D_\lh\subset S^2$ denotes the
  lower hemisphere, is the solid torus bounded by $L_\std$.  
\end{itemize}

The main goal of this paper will be to deform a given relative
symplectic fibration to the standard one (see Theorem~\ref{thm:fib}
below). For later use, let us record the relative homology and
homotopy groups of the Clifford torus. 

\begin{lemma}\label{lem:homology}
For the Clifford torus $L_\std\subset S^2\times S^2$ the second relative
homotopy/homology group 
$$
   \pi_2(S^2\times S^2,L_\std)\cong H_2(S^2\times S^2,L_\std)\cong
   H_2(S^2\times S^2)\oplus H_1(T^2)$$
is free abelian generated by 
$$
   [S^2\times\pt],\ [\pt\times S^2],\ [D_\lh\times\pt],\ [\pt\times D_\lh].
$$ 
\end{lemma}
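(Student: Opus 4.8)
The plan is to compute the relative homotopy group $\pi_2(S^2\times S^2, L_\std)$ via the long exact sequence of the pair and then identify everything explicitly with the listed generators. First I would recall the long exact sequence
$$
   \pi_2(L_\std)\to\pi_2(M)\to\pi_2(M,L_\std)\xrightarrow{\partial}\pi_1(L_\std)\to\pi_1(M),
$$
where $M=S^2\times S^2$. Since $L_\std=T^2$ has $\pi_2(L_\std)=0$ and $\pi_1(M)=0$, this reduces to a short exact sequence
$$
   0\to\pi_2(M)\to\pi_2(M,L_\std)\xrightarrow{\partial}\pi_1(L_\std)\to 0.
$$
Because $\pi_1(L_\std)\cong H_1(T^2)\cong\Z^2$ is free abelian, the sequence splits, giving $\pi_2(M,L_\std)\cong\pi_2(M)\oplus\pi_1(L_\std)\cong\Z^2\oplus\Z^2$, and the Hurewicz theorem together with $\pi_1(L_\std)$ abelian identifies this with $H_2(M,L_\std)$; the middle term $\pi_2(M)\cong H_2(M)$ is generated by $[S^2\times\pt]$ and $[\pt\times S^2]$.

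Next I would exhibit explicit splitting generators. The boundary map $\partial$ sends a relative class to the class of its boundary loop in $L_\std=E\times E$. Taking the half-disk $D_\lh\subset S^2$ (the lower hemisphere, with $\partial D_\lh=E$), the relative classes $[D_\lh\times\pt]$ and $[\pt\times D_\lh]$ have boundaries $[E\times\pt]$ and $[\pt\times E]$ respectively, which form a basis of $\pi_1(L_\std)=\pi_1(E)\times\pi_1(E)\cong\Z^2$. Hence $\partial$ maps the subgroup generated by these two classes isomorphically onto $\pi_1(L_\std)$, providing the splitting. Together with the images of the generators $[S^2\times\pt],[\pt\times S^2]$ of $\pi_2(M)$ (which lie in $\ker\partial$ since they are absolute classes), the four classes $[S^2\times\pt]$, $[\pt\times S^2]$, $[D_\lh\times\pt]$, $[\pt\times D_\lh]$ freely generate $\pi_2(M,L_\std)$.

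I do not expect any serious obstacle here: the argument is a routine application of the long exact sequence plus the observation that all relevant groups are free abelian so the sequence splits canonically. The only point requiring a small amount of care is checking that $\partial$ takes $[D_\lh\times\pt]$ and $[\pt\times D_\lh]$ to an actual \emph{basis} of $\pi_1(L_\std)$ (and not, say, to twice a generator), which follows because $D_\lh$ is an embedded disk bounding the equator $E$ exactly once; equivalently one can verify it on $H_1$ via the intersection pairing with the meridian/longitude curves. Once that is in place, the isomorphism statement and the identification of generators are immediate.
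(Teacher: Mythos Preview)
Your argument is essentially the same as the paper's: both use the long exact sequence of the pair, observe that it reduces to a split short exact sequence because the quotient is free abelian, and read off the four generators. The one point where you are looser than the paper is the passage from $\pi_2(M,L_\std)$ to $H_2(M,L_\std)$. Invoking ``the Hurewicz theorem together with $\pi_1(L_\std)$ abelian'' is not quite enough: the relative Hurewicz theorem in degree~$2$ only gives an isomorphism after taking $\pi_1(L_\std)$-coinvariants, so you would still need to check that this action is trivial. The paper avoids this by instead running the parallel long exact sequence in homology, noting that the map $H_2(L_\std)\to H_2(M)$ vanishes (because $L_\std$ bounds the solid torus $E\times D_\lh$ in $M$), and then applying the five lemma to the commuting ladder of Hurewicz maps. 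Either fix is short, but you should include one of them.
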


\begin{proof}
The long exact sequences of the pair $(M=S^2\times S^2,L=L_\std)$ and
the Hurewicz maps yield the commuting diagram 
\begin{equation*}
\begin{CD}
   0 @>>> \pi_2(M) @>>> \pi_2(M,L) @>>> \pi_1(L) @>>> 0 \\
   @VVV @VV{\cong}V @VVV @VV{\cong}V @VVV \\
   H_2(L) @>{0}>> H_2(M) @>>> H_2(M,L) @>>> H_1(L) @>>> 0. 
\end{CD}
\end{equation*}
Here the first horizontal map in the lower row is zero because $L$
bounds the solid torus $T_\std=E\times D_\lh$ in $S^2\times S^2$, where
$E\subset S^2$ denotes the equator. Now the middle vertical map is an
isomorphism by the five lemma, and the generators of $H_2(M,L)$ are
obtained from the generators $[S^2\times\pt],\ [\pt\times S^2]$ of
$H_2(M)$ and $[E\times\pt],\ [\pt\times E]$ of $H_1(L)$. 
\end{proof}

\section{Standardisations}\label{sec:stand}

In this section we show how to deform a relative symplectic fibration
to make it split (in a sense defined below) near the symplectic sections $\Sigma,\Sigma'$ and
near one fibre $F$. In particular, the standardised fibration will have
trivial holonomy in these regions. This provides a convenient setup
for the discussion in Section~\ref{sec:killholonomy}. 

\subsection{Pullback by diffeomorphisms}

In this subsection, we show how to put a relative symplectic fibration
$\SS$ into a nicer form via pullback by diffeomorphisms. Note that
this is not really changing $\SS$ but just looking at it from a
different angle. We will see that using pullbacks we can either
standardise all data except the symplectic form $\om$, or all data
except the foliation $\FF$. So the nontriviality of a relative
symplectic fibration only arises from the interplay of $\om$ and
$\FF$, as measured by the holonomy of the corresponding symplectic
connection.  

In order to establish a clean picture of what can be achieved by
pullbacks, we will prove some results in stronger versions than what we
actually need in the sequel. 
\smallskip

{\bf Fixing the fibration. }
We begin with a useful characterisation of diffeomorphisms that are
trivial on homology. Recall the generators $A=[S^2\times\pt]$ and
$B=[\pt\times S^2]$ of $H_2(S^2\times S^2)$. 

\begin{lemma}\label{Pdiffrelfib}
A diffeomorphism $\phi$ of $S^2\times S^2$ is trivial on homology if
and only if it is orientation preserving and satisfies $\phi_\ast(B)=B$.
\end{lemma}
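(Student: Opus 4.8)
The statement characterizes when $\phi_*=\id$ on $H_2(S^2\times S^2)\cong\Z A\oplus\Z B$. One direction is immediate: if $\phi_*=\id$ on $H_2$, then in particular $\phi_*(B)=B$, and since $\phi_*$ preserves the intersection form it preserves the orientation of the $4$-manifold. The plan is to prove the converse: assume $\phi$ is orientation preserving with $\phi_*(B)=B$, and deduce $\phi_*(A)=A$ (which gives $\phi_*=\id$ on $H_2$, hence on all homology groups since $H_*(S^2\times S^2)$ is generated in degree $2$ together with the fundamental class and $H_0$, all of which are automatically fixed).

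The key tool is the intersection form, which in the basis $(A,B)$ is $\begin{pmatrix}0&1\\1&0\end{pmatrix}$, together with the fact that $\phi_*$ is an automorphism of $(H_2,Q)$ preserving the sign of $Q$ on the fundamental class, i.e.\ $\phi_*\in O(Q)$ with $\det\phi_*=+1$ (orientation preserving). Write $\phi_*(A)=aA+bB$ with $a,b\in\Z$. From $A\cdot A=0$ we get $0=\phi_*(A)\cdot\phi_*(A)=2ab$, so $a=0$ or $b=0$. From $A\cdot B=1$ and $\phi_*(B)=B$ we get $1=\phi_*(A)\cdot B=(aA+bB)\cdot B=a$, so $a=1$, forcing $b=0$ and $\phi_*(A)=A$. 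Thus $\phi_*=\id$ on $H_2$. (Note the hypothesis $\phi_*(B)=B$ is genuinely needed: the orientation-preserving automorphism swapping $A$ and $B$ also lies in $SO(Q)$.)

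The main subtlety — really the only place where a little care is required — is to make sure that an orientation-preserving diffeomorphism indeed acts on $H_2$ by an isometry of the intersection form of positive determinant, and that triviality on $H_2$ propagates to triviality on all of $H_*$. For the first point one invokes naturality of the cup/intersection product and the fact that $\phi$ sends the fundamental class $[M]$ to $\pm[M]$ with sign $+1$ precisely when $\phi$ is orientation preserving. For the second, $H_0(M)\cong\Z$ and $H_4(M)\cong\Z$ carry canonical generators fixed by any orientation-preserving $\phi$, while $H_1(M)=H_3(M)=0$, so there is nothing further to check. This completes the argument.
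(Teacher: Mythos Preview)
Your proof is correct and follows essentially the same route as the paper: write $\phi_*(A)=aA+bB$, use preservation of the intersection form to get $a=1$ from $A\cdot B=1$ and $b=0$ from $A\cdot A=0$. One small inaccuracy in your side remarks: an orientation-preserving diffeomorphism gives $\phi_*\in O(Q)$ (it preserves the intersection pairing), but not necessarily $\det\phi_*=+1$ on $H_2$ --- indeed the factor swap has determinant $-1$ on $H_2$, so it lies in $O(Q)\setminus SO(Q)$; fortunately your argument never actually uses the determinant condition, only the isometry property.
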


\begin{proof}
The ``only if'' is clear, so let us prove the ``if''.  
Let us write $\phi_\ast(A)=mA+nB$ for integers $m,n$. 
Since $\phi$ is orientation preserving, it preserves the intersection
form on $H_2(S^2\times S^2)$ and we obtain
\begin{align*}
   1 &= \phi_\ast(A)\cdot\phi_\ast(B) = (mA+nB)\cdot B = m, \cr
   0 &= \phi_\ast(A)\cdot\phi_\ast(A) = (A+nB)\cdot(A+nB) = 2n. 
\end{align*}
This shows that $\phi_\ast A=A$, so $\phi_\ast$ is the identity on
$H_2(S^2\times S^2)$. 
\end{proof}

Our first normalisation result is 

\begin{prop}
[Fixing the fibration]\label{prop:fix-F}
Let $\SS=(\mathcal{F},\omega,L,\Sigma,\Sigma')$ be a relative
symplectic fibration of $M=S^2\times S^2$. Then there exists a
diffeomorphism $\phi\in\Diff_\id(M)$ such that
$\phi^{-1}(\SS)=(\mathcal{F_\std},\wt\omega,L_\std,S_0,S_\infty)$ for
some symplectic form $\wt\om$. 
\end{prop}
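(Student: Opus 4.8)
The plan is to build the diffeomorphism $\phi$ in three stages, straightening one piece of data at a time while keeping track of the earlier normalisations. First I would straighten the foliation $\FF$. A smooth foliation of $S^2\times S^2$ by $2$-spheres in the class $B$ is the fibre foliation of a smooth $S^2$-bundle over $S^2$ (by the Remark after Lemma~\ref{Lmonotonetorusembcurve}, the base is $S^2$); since such a bundle is classified by $\pi_1(\mathrm{Diff}(S^2))\cong\pi_1(SO(3))\cong\Z/2$ and the two bundles are $S^2\times S^2$ and the nontrivial bundle $S^2\widetilde\times S^2$, our bundle is the trivial one, so it admits a bundle isomorphism to $p_1:S^2\times S^2\to S^2$; pulling back by this isomorphism we may assume $\FF=\FF_\std$. (One must check the resulting diffeomorphism lies in $\Diff_\id(M)$, which follows from Lemma~\ref{Pdiffrelfib}: it preserves orientation and sends fibre to fibre, hence $B$ to $B$.)

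Second, with $\FF=\FF_\std$ fixed, I would straighten the two sections $\Sigma,\Sigma'$. These are now disjoint smooth sections of $p_1$, i.e.\ graphs of smooth maps $S^2\to S^2$; since they are disjoint, the image of one avoids a point of each fibre, so after a fibre-preserving diffeomorphism (a fibrewise ambient isotopy of $S^2$) we may take $\Sigma=S_0=S^2\times\{S\}$. Then $\Sigma'$ is a section disjoint from $S_0$, hence a graph $z\mapsto(z,f(z))$ with $f(z)\neq S$ for all $z$; a further fibre-preserving diffeomorphism, the identity near $S$ in each fibre, carries $\Sigma'$ to $S_\infty=S^2\times\{N\}$. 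All of these diffeomorphisms preserve $\FF_\std$ and are fibrewise orientation preserving, so they lie in $\Diff_\id(M)$. Third, with $(\FF,\Sigma,\Sigma')=(\FF_\std,S_0,S_\infty)$ fixed, I would straighten the torus $L$: since $L$ is fibered by $p_1$ with embedded base curve $\gamma=p_1(L)$ (Lemma~\ref{Lmonotonetorusembcurve}), $L$ and its solid torus $T$ are determined topologically by $\gamma\subset S^2$ and a choice of disk-filling in each fibre; using an ambient isotopy of the base $S^2$ (lifted to a fibre-preserving diffeomorphism) we move $\gamma$ to the equator $E$, and then a fibre-preserving diffeomorphism, supported away from $S_0$ and $S_\infty$ and compatible with the disk structure, carries the solid torus $T$ onto $T_\std=E\times D_{\lh}$, hence $L$ onto $L_\std=E\times E$; here we use that $\Sigma=S_0$ sits inside the fibrewise interior of $T$ and $\Sigma'=S_\infty$ sits outside $T$, matching $S_0,S_\infty$ relative to $T_\std$. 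The composite of all these diffeomorphisms is the desired $\phi^{-1}$, and $\wt\om$ is simply the push-forward of $\om$; it is automatically symplectic and makes the leaves of $\FF_\std$ symplectic with $\wt\om(A)=\wt\om(B)=1$ since these properties are diffeomorphism-invariant.

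The main obstacle is the very first step: showing that the abstract smooth foliation $\FF$ by $2$-spheres is bundle-isomorphic to the product foliation, i.e.\ identifying the $S^2$-bundle it defines. The key inputs are that the leaves have class $B$ with $B\cdot B=0$ and that $c_1(TM)$ evaluates correctly, which together with the classification of $S^2$-bundles over $S^2$ (equivalently, $\pi_1 SO(3)=\Z/2$, distinguishing $S^2\times S^2$ from the twisted bundle) pins the bundle down as the trivial one; one then invokes that any two trivialisations differ by a map $S^2\to\mathrm{Diff}^+(S^2)$ and that $\mathrm{Diff}^+(S^2)\simeq SO(3)$ (Smale) to get an honest bundle isomorphism. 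Everything after that is a sequence of fibrewise isotopy-extension arguments, where the only care needed is to perform each isotopy with support in the complement of the data already standardised (the sections, then the solid torus), which is possible because at each stage the remaining freedom is a contractible—or at least connected—space of fibrewise configurations. Throughout, membership in $\Diff_\id(M)$ is guaranteed by Lemma~\ref{Pdiffrelfib}.
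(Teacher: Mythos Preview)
Your three-stage strategy (foliation, then sections, then torus) is exactly the paper's approach, and Steps~1 and~3 match it well. There is one genuine slip in Step~2: the clause ``since they are disjoint, the image of one avoids a point of each fibre'' does not justify carrying $\Sigma$ to $S_0$, because the avoided point varies with $z$ and gives no constraint on the homotopy class of the defining map $f:S^2\to S^2$; a section $\mathrm{graph}(f)$ with $\deg f\neq 0$ cannot be moved to a constant section by any fibre-preserving diffeomorphism. The paper fixes this by observing that $[\Sigma]=A=[S^2\times\pt]$ forces $[\mathrm{graph}(f)]=A+(\deg f)B$, hence $\deg f=0$ and $f$ is nullhomotopic, after which the fibrewise isotopy extension goes through; your subsequent argument for $\Sigma'$ (its defining map lands in the contractible $S^2\setminus\{S\}$) is then correct. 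In Step~3 you rightly flag the ``closing-up'' issue; the paper makes it explicit by choosing a path $g_t\in\Diff(S^2)$ with $g_t(N)=N$, $g_t(S)=S$, $g_t(E)=\Lambda_t$ and then using Smale's $\Diff(D^2,\partial D^2)\simeq *$ on each hemisphere to alter $g_t$ so that $g_1=\id$, which is what allows the map on $E\times S^2$ to extend to all of $S^2\times S^2$.
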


\begin{proof}
Consider the fibration $p:M\to\Sigma$ defined by $\FF$ and pick an
orientation preserving diffeomorphism $u:\Sigma\to S^2$. Then $u\circ
p:M\to S^2$ is a fibration by $2$-spheres. Since $\pi_1Diff_+(S^2)$ 
classifies $S^2$-bundles over $S^2$ and $Diff_+(S^2)$ deformation
retracts onto $SO(3)$, there are up to bundle isomorphism 
precisely two $S^2$-bundles over $S^2$: the trivial one
$p_1:S^2\times S^2\to S^2$ and a nontrivial one $X\to S^2$. The
total space $X$ of the nontrivial bundle is the blow-up of $\C P^2$
at one point, which is not diffeomorphic to $S^2\times S^2$ (e.g.~their
intersection forms differ). Thus the nontrivial bundle does not occur,
and we conclude that there exists a diffeomorphism $\phi:M\to M$ such 
the following diagram commutes:
\begin{equation*}
 \begin{CD}
    S^2\times S^2   @>\phi^{-1} >>   S^2\times S^2\\
    @VV{p }V                                   @VV{ p_1}V\\
    \Sigma                  @>u>>     S^2
 \end{CD}
\end{equation*} 
Moreover, the restriction of $\phi$ to each fibre is orientation
preserving, which implies the $\phi$ itself is orientation
preserving. Since the fibres of $p$ and $p_1$ all represent the
homology class $B$, it follows that $\phi_*B=B$ and thus
$\phi\in\Diff_\id(M)$ by Lemma~\ref{Pdiffrelfib}. After replacing 
$\SS$ by $\phi^{-1}(\SS)$, we may hence assume that $\FF=\FF_\std$ and
$p=p_1$. 

The section $\Sigma$ of $p_1:S^2\times S^2\to S^2$ can be
uniquely parametrized by $z\mapsto(z,f(z))$ for a smooth map $f:S^2\to
S^2$. After a preliminary isotopy we may assume that $f(z_0)=S$ equals
the south pole $S$ at a base point $z_0\in S^2$. 
Then $f$ represents a class in $\pi_2(S^2,S)$. Now by Hurewicz's
Theorem, $\pi_2(S^2,S))\cong H_2(S^2)$. Since
$[\Sigma]=A=[S^2\times\pt]$, the class of $f$ is trivial in
$H_2(S^2)$, and thus in $\pi_2(S^2,S)$, so that $f$ is
nullhomotopic. By smooth approximation, we find a smooth homotopy
$f_t$ from the constant map $f_0\equiv S$ to $f_1=f$. Now we use (a
fibered version of) the isotopy extension theorem to extend the family of
embeddings $S^2\times\{S\}\into M$, $(z,S)\mapsto(z,f_t(z))$ to a
family of fibre preserving diffeomorphisms $\phi_t:M\to M$ with
$\phi_0=\id$. After replacing $\SS$ by $\phi_1^{-1}(\SS)$, we may hence
assume that $\Sigma=S^2\times\{S\}=S_0$. Now we repeat the same
argument with $\Sigma'$ (this time it is even simpler because
$S^2\setminus\{S\}$ is contractible) to arrange
$\Sigma'=S^2\times\{N\}=S_\infty$. 

Now the torus $L$ is fibered by $p_1:S^2\times(S^2\setminus\{N,S\})\to
S^2$. By an isotopy of the base $S^2$ we can move the embedded curve
$p_1(L)$ to the equator $E\subset S^2$. Let $e(t)$, $t\in\R/\Z$, be a
parametrization of the equator $E$ and consider the loop of embedded
closed curves $\Lambda_t:=L\cap p_1^{-1}(e(t))$ in the fibre $S^2$. After a
further homotopy we may assume that $\Lambda_0=E$. Pick a
smooth family of diffeomorphisms $g_t:S^2\to S^2$, $t\in[0,1]$, such
that $g_0=\id$ and $g_t(E)=\Lambda_t$ for all $t$. Moreover, we can
arrange that $g_t(N)=N$ and $g_t(S)=S$ for all $t$. Then $g_1$
satisfies $g_1(E)=E$ as well as $g_1(N)=N$ and $g_1(S)=S$. We can
alter $g_t$ so that $g_1$ fixes $E$ pointwise. By a theorem of
Smale~\cite{Sma}, the group $\Diff(D^2,\p D^2)$ of diffeomorphsms of the
disk that are the identity near the boundary is contractible. 
So we can alter $g_t$ further (applying this to the upper and lower
hemispheres) so that $g_1=\id$. This may first destroy the conditions
$g_t(N)=N$ and $g_t(S)=S$, but they can be reinstalled by a further
alteration. Now we again use (a fibered version of) the isotopy
extension theorem to extend the embedding $E\times S^2\into M$,
$(e(t),w)\mapsto(e(t),g_t(w))$ to a fibre preserving diffeomorphism
$\phi:M\to M$ isotopic to the identity. Then $\phi^{-1}(\SS)$ has the
desired properties and the proposition is proved. 
\end{proof}

A similar (in fact, simpler) proof yields the following $1$-parametric
version of Proposition~\ref{prop:fix-F}. 

\begin{prop}
[Fixing the fibration -- parametric version]\label{prop:fix-F-par}
Let $\SS_t=(\FF_t,\om_t,$ 
$L_t,\Sigma_t,\Sigma_t')_{t\in[0,1]}$ be
a homotopy of relative
symplectic fibrations of $M=S^2\times S^2$. Then there exists an
isotopy of diffeomorphisms $\phi_t\in\Diff_\id(M)$ with $\phi_0=\id$ such that
$\phi_t^{-1}(\SS_t)=(\FF_0,\wt\omega_t,L_0,\Sigma_0,\Sigma_0')$ for
some family of symplectic forms $\wt\om_t$. \hfill$\square$
\end{prop}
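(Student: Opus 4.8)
The plan is to mimic the proof of Proposition~\ref{prop:fix-F} step by step, performing each construction in families parametrised by $t\in[0,1]$ and taking care that the resulting diffeomorphisms depend smoothly on $t$ and reduce to the identity at $t=0$. Since at each stage of the one-parametric argument we start from data that are already standard at $t=0$ (the homotopy may be assumed to satisfy $\SS_0=(\FF_0,\om_0,L_0,\Sigma_0,\Sigma_0')$ after relabelling, or we simply do not insist on this and set $\phi_0=\id$ directly), the isotopies we build will automatically start at the identity. The key simplification compared with the non-parametric case is that we no longer need the topological input (the classification of $S^2$-bundles over $S^2$, Hurewicz, Smale's theorem) to get \emph{started}: those facts were used to show a single bundle or map was standard up to diffeomorphism, and here the $t=0$ member is already standard, so what we need instead is that the relevant \emph{spaces of choices} are contractible or at least connected, which follows from the same theorems.

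Concretely, first I would fix the foliation: for each $t$ the fibration $p_t:M\to\Sigma_t$ together with a smoothly chosen orientation-preserving diffeomorphism $u_t:\Sigma_t\to S^2$ gives a family of $S^2$-bundles over $S^2$; all of them are trivial (the nontrivial bundle has total space not diffeomorphic to $S^2\times S^2$), and choosing bundle trivialisations smoothly in $t$ — possible because the space of trivialisations of a fixed such bundle is nonempty and connected, $\pi_0\bigl(\Diff_+(S^2)\bigr)=\ast$ — yields fibre-preserving diffeomorphisms $\psi_t$ with $\psi_t^{-1}$ carrying $\FF_t$ to $\FF_0=\FF_\std$; by Lemma~\ref{Pdiffrelfib} each $\psi_t\in\Diff_\id(M)$. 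After replacing $\SS_t$ by $\psi_t^{-1}(\SS_t)$ we may assume $\FF_t=\FF_\std$ for all $t$. Next, the sections $\Sigma_t,\Sigma_t'$ are graphs $z\mapsto(z,f_t(z))$, $z\mapsto(z,f_t'(z))$ of smooth maps into $S^2$; after a preliminary fibre-preserving isotopy normalising a base point, each $f_t$ represents a class in $\pi_2(S^2)$ that is trivial because $[\Sigma_t]=A$, so $(t,z)\mapsto f_t(z)$ extends to a smooth homotopy defined on $[0,1]\times S^2$ (using that $\{f_t\}$ is already a path in the relevant mapping space, with $f_0\equiv S$), and the fibered isotopy extension theorem produces a family $\phi_t\in\Diff_\id(M)$, $\phi_0=\id$, with $\phi_t^{-1}(\Sigma_t)=S_0$; repeat for $\Sigma_t'$ to get $S_\infty$. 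Finally, the tori $L_t$ are fibered by $p_1$ over curves $p_1(L_t)\subset S^2$; an isotopy of the base moves these to the equator $E$, then a smooth family of fibrewise diffeomorphisms $g_{t,s}:S^2\to S^2$ (smooth in both $t$ and the loop parameter $s$, fixing $N,S$, with $g_{0,\cdot}=\id$) normalises $L_t$ to $L_\std$, using Smale's contractibility of $\Diff(D^2,\partial D^2)$ to kill the remaining ambiguity fibrewise and in a $t$-family; one more application of the fibered isotopy extension theorem completes the construction.

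The main obstacle — and it is more bookkeeping than a genuine difficulty, which is why the authors call the proof ``in fact, simpler'' — is ensuring that all the intermediate smooth choices (trivialisations, homotopies $f_t$ to the constant map, the families $g_{t,s}$) can be made to depend smoothly on $t$ and to be the identity at $t=0$; this is exactly where one uses that the relevant fibres of these construction steps are \emph{contractible} spaces ($\Diff(D^2,\partial D^2)$, the space of nullhomotopies of a given nullhomotopic map, etc.) rather than merely nonempty, so that a section over $\{0\}$ extends to a section over $[0,1]$. A secondary point is that the successive replacements $\SS_t\rightsquigarrow\phi_t^{-1}(\SS_t)$ compose to a single isotopy $\phi_t\in\Diff_\id(M)$ with $\phi_0=\id$, which is immediate since $\Diff_\id(M)$ is a group and each factor is an isotopy starting at $\id$. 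Apart from this, every step is the parametrised echo of the corresponding step in Proposition~\ref{prop:fix-F}, and no new ideas are required.
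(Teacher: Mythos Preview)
Your proposal is correct and follows the approach the paper intends: the authors give no explicit proof, only the remark that a ``similar (in fact, simpler)'' argument than for Proposition~\ref{prop:fix-F} works, and your write-up is exactly that parametric echo, including the correct identification of \emph{why} it is simpler (the topological input is needed only to extend choices over $[0,1]$, not to get started). One small slip: you write $\FF_0=\FF_\std$, but the proposition only asks you to carry $\SS_t$ back to the $t=0$ data $(\FF_0,L_0,\Sigma_0,\Sigma_0')$, which need not be standard; this does not affect your argument, just replace ``standard'' by ``the $t=0$ object'' throughout.
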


{\bf Fixing the symplectic form. }
Our next result is an easy consequence of Moser's and Banyaga's
theorems. Since it will be used repeatedly in this paper, let us
recall the latter~\cite[Th\'eor\`eme II.2.1]{Ban} for future reference.  

%

\begin{thm}[Banyaga's isotopy extension theorem~\cite{Ban}]\label{thm:Banyaga}
Let $(M,\om)$ be a symplectic manifold and $\psi_t:M\to M$ a smooth
isotopy with $\psi_0=\id$ such that each $\psi_t$ is symplectic on a
neighbourhood of a compact subset $X\subset M$. Suppose that 
$\int_{\sigma}\psi_t^*\om$ is constant in $t$ for each $\sigma\in
H_2(M,X)$. Then there exists a symplectic isotopy $\phi_t$ with
$\phi_0=\id$ and $\phi_t|_X=\psi_t|_X$. 
\end{thm}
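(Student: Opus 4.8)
The plan is to deduce Banyaga's isotopy extension theorem from Moser's argument applied fibrewise in $t$, combined with the fact that the obstruction to extending the symplectic isotopy lies in a cohomology group that is killed precisely by the hypothesis on the periods $\int_\sigma\psi_t^*\om$. Concretely, set $\om_t:=\psi_t^*\om$. By hypothesis each $\om_t$ is a symplectic form which agrees with $\om$ near $X$ (since $\psi_t$ is symplectic there), and the cohomology class $[\om_t]\in H^2(M)$ is independent of $t$: indeed it is determined by its periods on $H_2(M)$, and every class in $H_2(M)$ can be represented by a cycle supported away from $X$ after a small perturbation — more robustly, one uses the long exact sequence of the pair $(M,X)$ and the hypothesis to see $[\om_t]=[\om]$ in $H^2(M)$ for all $t$. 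Hence $\frac{d}{dt}\om_t=d\beta_t$ for a smooth family of $1$-forms $\beta_t$, and because $\om_t\equiv\om$ near $X$ we may choose $\beta_t$ to vanish near $X$: the closed $2$-form $\dot\om_t$ is exact and supported away from $X$, and the primitive can be taken with support in $M\setminus X$ by relative de Rham theory — this is where the period hypothesis enters, guaranteeing $[\dot\om_t]=0$ in $H^2(M,X)$ (equivalently, that $\dot\om_t$ is exact relative to $X$).

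Next I would run the Moser trick. Define the time-dependent vector field $X_t$ by $\iota_{X_t}\om_t=-\beta_t$; this is well defined since $\om_t$ is nondegenerate, and $X_t$ vanishes near $X$ because $\beta_t$ does. Let $\theta_t$ be the flow of $X_t$ starting at $\theta_0=\id$; since $X_t$ is supported in a compact set away from $X$ and $M$ — here $M=S^2\times S^2$ — is closed, the flow exists for all $t\in[0,1]$, satisfies $\theta_t|_{\mathcal N}=\id$ on a neighbourhood $\mathcal N$ of $X$, and the standard computation
$$
   \ddt(\theta_t^*\om_t)=\theta_t^*(L_{X_t}\om_t+\dot\om_t)
   =\theta_t^*(d\iota_{X_t}\om_t+d\beta_t)=0
$$
gives $\theta_t^*\om_t=\om_0=\om$ for all $t$, i.e. $\theta_t$ is a symplectomorphism from $(M,\om)$ to $(M,\om_t)$.

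Finally, set $\phi_t:=\psi_t\circ\theta_t$. Then $\phi_0=\psi_0\circ\theta_0=\id$, and $\phi_t^*\om=\theta_t^*\psi_t^*\om=\theta_t^*\om_t=\om$, so $\phi_t$ is a symplectic isotopy. Moreover, on the neighbourhood $\mathcal N$ of $X$ we have $\theta_t|_{\mathcal N}=\id$, hence $\phi_t|_X=\psi_t\circ\theta_t|_X=\psi_t|_X$, which is exactly the required matching condition. This completes the argument.

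The main obstacle — and the only genuinely delicate point — is the step producing the family of primitives $\beta_t$ that vanish near $X$: one must check carefully that the period hypothesis $\int_\sigma\psi_t^*\om=\const$ for all $\sigma\in H_2(M,X)$ is exactly equivalent to the vanishing of $[\dot\om_t]$ in $H^2(M,X)\cong H^2(M,\mathcal N)$, so that $\dot\om_t=d\beta_t$ can be solved with $\beta_t$ supported in $M\setminus X$; smoothness of $\beta_t$ in $t$ then follows from choosing the primitives via a fixed right inverse to $d$ (e.g. a Hodge-theoretic or a fixed contracting homotopy in relative de Rham cohomology). Everything else is the verbatim Moser–Banyaga computation and the compactness of $M$, which guarantee global existence of the flow $\theta_t$. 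Since the paper only needs this statement as a black box (as emphasised, ``it will be used repeatedly''), I would present the proof at this level of detail and refer to \cite{Ban} for the original treatment.
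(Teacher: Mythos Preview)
The paper does not prove this theorem: it is quoted verbatim from Banyaga's original article (Th\'eor\`eme~II.2.1 in~\cite{Ban}) and used as a black box throughout. So there is no ``paper's own proof'' to compare against.

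That said, your sketch is the standard Moser-type argument and is essentially correct. A few small points worth cleaning up: you reuse the symbol $X_t$ for the Moser vector field while $X$ already denotes the compact subset; the claim that the flow $\theta_t$ exists for all $t$ needs $M$ closed (or at least completeness of the vector field), which you tacitly assume by specialising to $S^2\times S^2$ --- note the paper also applies the theorem on the compact annulus $A$ in the appendix, where the vector field vanishes near $\partial A$, so this is fine in all the applications; and you should argue that the neighbourhood of $X$ on which $\psi_t^*\om=\om$ and on which $\beta_t$ vanishes can be taken uniform in $t$, which follows from compactness of $[0,1]$. The identification of the period hypothesis with vanishing of $[\dot\om_t]$ in $H^2(M,X;\R)$ via the universal coefficient pairing is the correct way to justify the relative primitive, and your final paragraph already flags this as the one delicate step.
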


\begin{prop}
[Fixing the symplectic form -- parametric version]\label{prop:fix-om-par}
Let $\SS_t=(\mathcal{F}_t,\omega_t,L_t,\Sigma_t,\Sigma_t')_{t\in[0,1]}$ be
a homotopy of relative
symplectic fibrations of $M=S^2\times S^2$. Then there exists an
isotopy of diffeomorphisms $\phi_t\in\Diff_\id(M)$ with $\phi_0=\id$ such that
$\phi_t^{-1}(\SS_t)=(\wt\FF_t,\omega_0,L_0,\Sigma_0,\Sigma_0')$ for
some family of foliations $\wt\FF_t$. 
\end{prop}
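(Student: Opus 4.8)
The plan is to first apply Proposition~\ref{prop:fix-F-par} to reduce to the case where all the foliations are standard, i.e.~$\FF_t = \FF_\std$ and the sections/Lagrangian are fixed: after replacing $\SS_t$ by $\psi_t^{-1}(\SS_t)$ for a suitable isotopy $\psi_t\in\Diff_\id(M)$, we may assume $\SS_t = (\FF_\std, \om_t, L_\std, S_0, S_\infty)$ with only the symplectic form $\om_t$ varying. Note that since the original $\SS_t$ is a homotopy of relative symplectic fibrations, the class $[\om_t]\in H^2(M)$ is locally constant (it is pinned down by $\om_t(A) = \om_t(B) = 1$), hence $[\om_t] = [\om_\std]$ for all $t$; moreover all the $\om_t$ are cohomologous and in particular $[\om_t] = [\om_0]$. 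Having done this reduction, the task becomes: given a path of symplectic forms $\om_t$ (all cohomologous, all taming the leaves of $\FF_\std$, making $S_0, S_\infty$ symplectic and $L_\std$ Lagrangian-monotone), find an isotopy $\phi_t\in\Diff_\id(M)$ with $\phi_0 = \id$ taking $\om_t$ back to $\om_0$ and carrying the remaining data to a homotopy of relative symplectic fibrations of the stated form.

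The engine here is a parametric Moser argument combined with Banyaga's isotopy extension theorem (Theorem~\ref{thm:Banyaga}). Since $[\om_t]$ is constant in $t$, we have $\frac{d}{dt}\om_t = d\eta_t$ for a smooth family of $1$-forms $\eta_t$ (obtained, e.g., by a fixed homotopy operator on $M = S^2\times S^2$, so $\eta_t$ depends smoothly on $t$ and we can take $\eta_0$ so that the flow starts at the identity). Define the time-dependent vector field $X_t$ by $\iota_{X_t}\om_t = -\eta_t$ (using nondegeneracy of $\om_t$) and let $\phi_t$ be its flow, so that $\phi_t^*\om_t = \om_0$ by the usual Moser computation $\frac{d}{dt}\phi_t^*\om_t = \phi_t^*(\mathcal{L}_{X_t}\om_t + \frac{d}{dt}\om_t) = \phi_t^*(d\iota_{X_t}\om_t + d\eta_t) = 0$. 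Each $\phi_t$ is a diffeomorphism isotopic to the identity, hence lies in $\Diff_\id(M)$. Then $\phi_t^{-1}(\SS_t)$ has symplectic form $(\phi_t)_*\om_t = \om_0$ — wait, we want $\phi_t^*$ on the form, so more precisely one arranges the convention so that $\phi_t^{-1}(\SS_t) = (\wt\FF_t, \om_0, L_0, \Sigma_0, \Sigma_0')$ with $\wt\FF_t := \phi_t^{-1}(\FF_\std)$; the sections and torus are carried back to themselves only if $\phi_t$ fixes them, which it need not, so one does \emph{not} keep $L_0, \Sigma_0, \Sigma_0'$ automatically. To fix this one applies Banyaga's theorem relative to the compact set $X := L_\std \cup S_0 \cup S_\infty$: near $X$ the identity isotopy is already $\om_0$-symplectic, and the cohomological hypothesis $\int_\sigma \id^*\om_0 = \int_\sigma\om_0$ is trivially constant, so there is nothing to do — rather, the point is that \emph{before} running Moser we should first arrange, using an isotopy supported near $X$, that the path $\om_t$ is constant near $X$. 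Concretely: since $L_\std$ is Lagrangian and monotone for every $\om_t$ and $S_0, S_\infty$ are symplectic for every $\om_t$, a Weinstein/Moser argument in a neighborhood of $X$ produces an isotopy $\chi_t$ supported near $X$ with $\chi_t^*\om_t = \om_0$ near $X$; replacing $\om_t$ by $\chi_t^*\om_t$ we may assume $\om_t \equiv \om_0$ near $X$. Now the homotopy operator can be chosen to give $\eta_t$ vanishing near $X$ (the forms $\frac{d}{dt}\om_t$ vanish there), so the Moser flow $\phi_t$ fixes $X$ pointwise. Then $\phi_t^{-1}(\SS_t) = (\phi_t^{-1}(\FF_\std), \om_0, L_0, \Sigma_0, \Sigma_0')$ is a homotopy of relative symplectic fibrations of exactly the asserted form, with $\wt\FF_t := \phi_t^{-1}(\FF_\std)$.

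The main obstacle is the bookkeeping to ensure that at every stage the data still forms a \emph{relative symplectic fibration} — in particular that $L_\std$ stays monotone. Monotonicity is a closed condition depending only on $[\om_t]$ restricted to $\pi_2(M, L_\std)$ (via Lemma~\ref{lem:homology}, the relative class is free abelian on four explicit generators, and $\om_\std$ and the Maslov class pair with them compatibly), so any diffeomorphism in $\Diff_\id(M)$ fixing $L_\std$ preserves monotonicity, and any symplectic form cohomologous to $\om_0$ with $L_\std$ Lagrangian inherits it; since all our operations either fix $L_\std$ or move it by an ambient diffeomorphism while dragging $\om$ along, monotonicity is automatic throughout. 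The genuinely delicate point is the first reduction step (invoking Proposition~\ref{prop:fix-F-par}), whose proof is only sketched in the excerpt, but we are entitled to assume it; granting that, the remaining argument is the parametric Moser–Banyaga package outlined above, which is routine.
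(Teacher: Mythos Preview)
Your approach differs from the paper's. The paper does \emph{not} invoke Proposition~\ref{prop:fix-F-par}; instead it runs global Moser first to fix $\om_t=\om_0$ (letting $L_t,\Sigma_t,\Sigma_t'$ drift), then applies Banyaga's Theorem~\ref{thm:Banyaga} directly to the isotopy $X_t=L_t\amalg\Sigma_t\amalg\Sigma_t'$, checking the area hypothesis via monotonicity. You instead freeze $(\FF,L,\Sigma,\Sigma')$ first via Proposition~\ref{prop:fix-F-par}, then attempt a relative Moser argument keeping $X=L_0\cup\Sigma_0\cup\Sigma_0'$ fixed. Both routes are legitimate and ultimately hinge on the same monotonicity input; yours is essentially reproving a special case of Banyaga by hand rather than citing it.

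There is, however, a genuine gap in your relative Moser step. You assert that ``the homotopy operator can be chosen to give $\eta_t$ vanishing near $X$ (the forms $\frac{d}{dt}\om_t$ vanish there)''. This is false as stated: $d\eta_t=0$ on a neighbourhood $U$ of $X$ does \emph{not} force $\eta_t$ to be exact on $U$, because $H^1(U)\cong H^1(L_0)\cong\Z^2$ is nonzero. What you need is that for each generating loop $\gamma\subset L_0$ one has $\int_\gamma\alpha_t=0$, where $\om_t-\om_0=d\alpha_t$. Capping $\gamma$ by a disk $D\subset M$ with $\partial D=\gamma$ gives $\int_\gamma\alpha_t=\int_D(\om_t-\om_0)$, and \emph{this} vanishes precisely because $L_0$ is monotone for both $\om_t$ and $\om_0$: each area equals $\tfrac14\mu(D)$, and the Maslov index is independent of $t$. (Near $\Sigma_0,\Sigma_0'$ there is no obstruction since $H^1$ vanishes.) You do invoke monotonicity in your final paragraph, but only to check that the output is still a relative symplectic fibration; you need to deploy it here, at the point where you choose the primitive. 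This is exactly the role monotonicity plays in the paper's proof when verifying Banyaga's area hypothesis on $H_2(M,X_0)$. Once this is supplied, your argument goes through.
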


\begin{proof}
First, Moser's theorem provides an isotopy of diffeomorphisms
$\phi_t:M\to M$ with $\phi_0=\id$ such that $\phi_t^*\om_t=\om_0$.  
After replacing $\SS_t$ by $\phi^{-1}(\SS_t)$, we may hence assume that
$\om_t=\om_0$ for all $t$. 

Next, consider the isotopy of submanifolds
$X_t:=L_t\amalg\Sigma_t\amalg\Sigma_t'$ of $(M,\om_0)$. 
Let us write (using the smooth isotopy extension theorem)
$X_t=\psi_t(X_0)$ for diffeomorphisms $\psi_t:M\to M$ 
with $\psi_0=\id$. Since $L_t$ is Lagrangian and
$\Sigma_t\amalg\Sigma_t'$ is symplectic, the Lagrangian and symplectic
neighbourhood theorems provide a modification of $\psi_t$ which is
symplectic on a neighbourhood of $X_0$. 

We claim that the symplectic area $\int_{\sigma_t}\om_0$ is constant
in $t$ for each $\sigma\in H_2(M,X_0)$, where we denote
$\sigma_t:=(\psi_t)_*\sigma\in H_2(M,X_t)$.   
To see this, note that the map $H_2(M,L_0)\to H_2(M,X_0)$ is
surjective because $H_1(\Sigma_0\amalg\Sigma_0')=0$. So it suffices to
prove the claim for classes $\sigma\in H_2(M,L_0)\cong\pi_2(M,L_0)$.  
Now recall that the Lagrangian tori $L_t$ are monotone with respect to
$\om_0$. Since the Maslov class $\mu(\sigma_t)$ of
$L_t$ is constant in $t$, so is the symplectic area
$\int_{\sigma_t}\om_0$ by monotonicity and the claim is proved. 

In view of the claim, $(X_0,\psi_t)$ satisfies the hypotheses of
Banyaga's Theorem~\ref{thm:Banyaga}. It follows that the smooth
isotopy $\psi_t$ can be altered to a symplectic isotopy $\phi_t$ with
$\phi_0=\id$ and $\phi_t(X_0)=X_t$. This is the desired isotopy in the
proposition. 
\end{proof}

A non-parametric version of Proposition~\ref{prop:fix-om-par} is
much more subtle and will be discussed in Section~\ref{ss:proof}.

\subsection{Standardisation near a fibre}

Let us pick the point $z_0:=(1,0,0)$ on the equator $E$ in the base, 
so that $F:=p_1^{-1}(z_0)$ is a fibre of $\FF_\std$ intersecting
$L_\std$ in the equator $E$. The following proposition shows that we
can deform every relative symplectic fibration to make the triple
$(\FF,\om,L)$ standard near $F$. 

\begin{prop}[Standardisation near a fibre]\label{prop:fibre-standard}
Every relative symplectic fibration $\SS=(\FF_\std,\om,L_\std,\Sigma,\Sigma')$
is homotopic 
to a fibration of the form 
$\wt\SS=(\FF_\std,\wt\om,L_\std,\wt\Sigma,\wt\Sigma')$ such that 
$\wt\om=\om_\std$ on a neighbourhood of the fibre $F$. 
\end{prop}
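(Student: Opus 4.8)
The plan is to produce the deformation in two stages: first make $\om$ agree with $\om_\std$ on the single fibre $F$ itself, then spread this agreement to a neighbourhood of $F$ while keeping the whole structure a relative symplectic fibration. Throughout, I work with the fixed data $(\FF_\std,L_\std)$ and the projection $p_1\colon S^2\times S^2\to S^2$, and I will only move $\om$, $\Sigma$, $\Sigma'$.

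First I would observe that $\om|_F$ is an area form on $F\cong S^2$ with total area $\int_F\om=\om(B)=1=\int_F\om_\std$, and it makes $L_\std\cap F=E$ a Lagrangian (i.e.\ the equator, which splits $F$ into two disks of equal area $1/2$, matching $\om_\std|_F$). By Moser's theorem applied on the fibre, relative to the equator $E$, there is a diffeomorphism of $F$, isotopic to the identity and fixing $E$, carrying $\om|_F$ to $\om_\std|_F$; using a fibered isotopy extension (constant outside a small neighbourhood of $F$, and arranged to fix $\Sigma,\Sigma'$ near where they meet $F$ by cutting off) I may assume $\om|_F=\om_\std|_F$ after a homotopy of relative symplectic fibrations. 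Here one must check that the intermediate $\om_t$ stay symplectic, that the leaves stay symplectic, and that the sections stay symplectic and disjoint from $T$ — all of which hold because the modification is supported near $F$ and is $C^0$-small, hence preserves the open conditions.

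Next, to pass from agreement on $F$ to agreement near $F$: since $\om$ and $\om_\std$ are both symplectic, agree on $F$, and are cohomologous on $M$ (both evaluate to $1$ on $A$ and on $B$, and $H^2(M)$ is generated by the Poincaré duals of $A,B$), I would invoke the relative Moser argument. Choose a primitive $\alpha$ of $\om-\om_\std$ which vanishes on $F$ (possible since $\om-\om_\std$ is exact and restricts to $0$ on $F$, so one can take a primitive and subtract a closed form to kill its restriction, using $H^1(F)=0$). Then $\om_s:=\om_\std+s(\om-\om_\std)$ is symplectic near $F$ for all $s\in[0,1]$ — on $F$ itself it equals $\om_\std|_F$, and by openness it stays nondegenerate on a neighbourhood of $F$ — and the Moser vector field $X_s$ defined by $\iota_{X_s}\om_s=-\alpha$ vanishes along $F$. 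Its flow $\psi_s$, which exists on a possibly smaller neighbourhood of $F$ and fixes $F$ pointwise, satisfies $\psi_1^*\om=\om_\std$ near $F$; extending $\psi_s$ by a cutoff to a global isotopy of $M$ supported near $F$ gives the homotopy of relative symplectic fibrations, with $\wt\Sigma:=\psi_1(\Sigma)$, $\wt\Sigma':=\psi_1(\Sigma')$, and $\wt\om$ the pushforward (which equals $\om_\std$ near $F$). Since $\psi_s$ is $C^1$-close to the identity near $F$ and equals the identity away from $F$, the leaves of $\FF_\std$, the torus $L_\std$, the solid torus $T$, and the sections all retain the required properties.

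The main obstacle is the bookkeeping at the two sections: $\Sigma$ and $\Sigma'$ meet $F$ transversally, and the various cutoff modifications near $F$ must be arranged so that, at each stage, the (possibly moved) sections remain \emph{symplectic}, remain \emph{disjoint from each other}, $\Sigma'$ stays disjoint from $T$, and $\Sigma$ stays in the interior of $T\cap p_1^{-1}(\gamma(t))$. This is where care is needed — but since every condition in Definition~\ref{DAMSF} except the cohomological normalisation of $\om$ is an open condition, and all our modifications are localised near $F$ and can be taken $C^1$-small there, these conditions are automatically preserved; the cohomological condition $\om(A)=\om(B)=1$ is preserved because we only apply diffeomorphisms in $\Diff_\id(M)$ and Moser-type deformations that fix the cohomology class. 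Thus the homotopy stays within relative symplectic fibrations, and $\wt\SS=(\FF_\std,\wt\om,L_\std,\wt\Sigma,\wt\Sigma')$ with $\wt\om=\om_\std$ near $F$ is the desired fibration.
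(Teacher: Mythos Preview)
Your Stage~2 contains a genuine gap: the convex interpolation $\om_s=(1-s)\om_\std+s\om$ need not be symplectic at points of $F$, so the openness argument fails at the outset. After Stage~1 you only know that $\om$ and $\om_\std$ agree on the $2$-dimensional subspace $T_xF\subset T_xM$; this does \emph{not} force their convex combinations to be nondegenerate on the $4$-dimensional $T_xM$. Concretely, in coordinates $(x_1,y_1,x_2,y_2)$ with $T_xF=\mathrm{span}(\partial_{x_1},\partial_{y_1})$, take $\om_\std=dx_1\wedge dy_1+dx_2\wedge dy_2$ and
\[
\om=dx_1\wedge dy_1 + p\,dx_1\wedge dx_2 + q\,dx_1\wedge dy_2 + r\,dy_1\wedge dx_2 + s\,dy_1\wedge dy_2 + t\,dx_2\wedge dy_2.
\]
The Pfaffian of $\om_{s'}$ is the quadratic $1+(t-1)s'+(qr-ps)s'^2$; with e.g.\ $qr-ps=4$ and $t=-3.5$ one has $\mathrm{Pf}(\om)=0.5>0$ but $\mathrm{Pf}(\om_{s'})<0$ for $s'$ near $0.56$. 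Nothing in the definition of a relative symplectic fibration rules this out (no section passes through a generic point of $F$), so your Moser flow cannot even be defined. There is a secondary issue: your construction of $\alpha$ only achieves $i^*\alpha=0$, which makes $X_s$ lie in the $\om_s$-orthogonal complement of $T_xF$, not vanish on $F$; you would need the stronger $\alpha_x=0$ for $x\in F$ (obtainable from the tubular-neighbourhood homotopy formula, but not from your argument).

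The paper sidesteps the interpolation problem entirely by working one dimension up: it applies the coisotropic neighbourhood theorem (Lemma~\ref{lem:coiso-nbhd}) to the hypersurface $p_1^{-1}(\delta)$ for an arc $\delta\subset E$ through $z_0$, after matching $\om|_F$ to $\sigma_\std$ via Lemma~\ref{Pspecialsymp}. This produces a local diffeomorphism $\psi$ with $\psi^*\om=\om_\std$ near $p_1^{-1}(\delta)$ without any linear interpolation of symplectic forms. The price is that $\psi$ disturbs the foliation; the paper then deforms $\psi^{-1}(\FF_\std)$ back to $\FF_\std$ near $F$ (a $C^1$-small symplectic deformation of leaves) and invokes the parametric Proposition~\ref{prop:fix-F-par} to restore $(\FF_\std,L_\std)$ globally. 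Your outline would need a comparable mechanism---either a genuine symplectic-neighbourhood argument for the symplectic submanifold $F$ followed by an explicit restoration of $(\FF_\std,L_\std)$, or the hypersurface trick---to close the gap.
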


The proof is given in~\cite[Lemma 3.2.3]{Ivrii}. For convenience, we
recall the argument. It is based on two easy lemmas. 

\begin{lemma}\label{Pspecialsymp}
Let $E$ denote the equator and $D_\lh$ the lower hemisphere in
$S^2\subset \mathbb{R}^3$. Let $\sigma$ be a symplectic	
form on $S^2$ cohomologous to $\sigma_\std$ such that
$\int_{D_\lh}\sigma=\frac{1}{2}$. Then there exists an isotopy of 
diffeomorphisms $h_t:S^2\to S^2$ with $h_0=\id$ such that  
$h_t(E)=E$ for all $t$ and $h_1^\ast \sigma=\sigma_\std$. 
\end{lemma}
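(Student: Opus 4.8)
\textbf{Proof plan for Lemma~\ref{Pspecialsymp}.}
The plan is to first reduce to the case where $\sigma$ and $\sigma_\std$ already agree near the equator $E$, and then apply Moser's argument on the two hemispheres separately. For the reduction, observe that $E$ has a tubular neighbourhood $A_\eps=(-\eps,\eps)\times E$ on which both $\sigma$ and $\sigma_\std$ restrict to area forms. Choose an orientation-preserving diffeomorphism $g_0$ of $A_\eps$ fixing $E$ pointwise which pulls $\sigma$ back to a form agreeing with $\sigma_\std$ on a smaller neighbourhood $A_{\eps'}$ of $E$; this is an elementary $2$-dimensional fact (write both forms in coordinates $(s,\theta)$ as $f(s,\theta)\,ds\wedge d\theta$, $f>0$, and integrate in $s$). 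Since $A_\eps$ is connected, such a $g_0$ is isotopic to the identity through diffeomorphisms fixing $E$ pointwise, and one extends this isotopy to all of $S^2$, preserving $E$, to obtain the first part of the desired family $h_t$. So we may assume from now on that $\sigma=\sigma_\std$ on $A_{\eps'}$.

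\textbf{Splitting along the equator.}
Now $\sigma$ and $\sigma_\std$ agree near $E$, and the hypothesis $\int_{D_\lh}\sigma=\tfrac12=\int_{D_\lh}\sigma_\std$ forces, by cohomology, also $\int_{D_\uh}\sigma=\tfrac12=\int_{D_\uh}\sigma_\std$, where $D_\uh$ is the upper hemisphere. Work on the closed lower hemisphere $D_\lh$, which is a disk. On $D_\lh$ the forms $\sigma$ and $\sigma_\std$ are cohomologous rel boundary collar (they agree near $\p D_\lh=E$ and have equal integrals), so Moser's relative theorem on a manifold with boundary — or equivalently Moser on the closed disk with fixed boundary behaviour — yields an isotopy of diffeomorphisms of $D_\lh$, equal to the identity near $E$, carrying $\sigma$ to $\sigma_\std$. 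Do the same on $D_\uh$. Gluing the two isotopies along $E$ (where both are the identity) produces a smooth isotopy $k_t$ of $S^2$ with $k_0=\id$, $k_t(E)=E$ for all $t$ (indeed $k_t$ fixes $E$ pointwise), and $k_1^\ast\sigma=\sigma_\std$.

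\textbf{Conclusion.}
Concatenating the two isotopies — first the one from the reduction step making $\sigma$ standard near $E$, then the hemisphere-wise Moser isotopy $k_t$ — and reparametrising in $t$ gives the required family $h_t:S^2\to S^2$ with $h_0=\id$, $h_t(E)=E$ for all $t$, and $h_1^\ast\sigma=\sigma_\std$.

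\textbf{Main obstacle.}
The only genuinely delicate point is running Moser's argument on the hemisphere \emph{with boundary} while keeping the isotopy fixed near that boundary: one must choose the interpolating family $\sigma_r=(1-r)\sigma+r\,\sigma_\std$, note it is symplectic for all $r$ (both forms give the same orientation) and constant near $E$, and then solve $\iota_{X_r}\sigma_r=-\dot\sigma_r$ with $\dot\sigma_r=d\beta_r$ for a primitive $\beta_r$ that vanishes near $E$ — which is possible precisely because $\dot\sigma_r$ has vanishing integral over $D_\lh$ and vanishes near $\p D_\lh$. The resulting time-dependent vector field $X_r$ then vanishes near $E$, so its flow fixes a neighbourhood of $E$ and in particular preserves $E$. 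Everything else is routine.
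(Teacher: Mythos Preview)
Your approach is correct and genuinely different from the paper's. The paper runs Moser globally on $S^2$ for the linear path $\sigma_t=(1-t)\sigma_\std+t\sigma$, obtaining diffeomorphisms $f_t$ with $f_t^*\sigma_t=\sigma_\std$; the images $f_t^{-1}(E)$ are then an isotopy of curves each bounding a disk of $\sigma_\std$-area $1/2$, and Banyaga's isotopy extension theorem supplies $\sigma_\std$-symplectomorphisms $g_t$ carrying $E$ to $f_t^{-1}(E)$, so that $h_t:=f_t\circ g_t$ does the job. Your route instead standardises $\sigma$ near $E$ by an explicit fibrewise map and then runs Moser with compactly supported primitives on each hemisphere. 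Your argument is more elementary in that it avoids invoking Banyaga, at the cost of the extra reduction step; the paper's argument is shorter and reuses a tool (Banyaga) already set up elsewhere in the paper.

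One small correction: the sentence ``Since $A_\eps$ is connected, such a $g_0$ is isotopic to the identity through diffeomorphisms fixing $E$ pointwise'' is not a valid justification --- connectedness of $A_\eps$ says nothing about connectedness of the relevant diffeomorphism group, and in fact diffeomorphisms of an annulus fixing both boundary components and a core circle pointwise form a group with $\pi_0\cong\Z^2$ (Dehn twists on each half). What makes your claim true is the specific form of $g_0$: it is of the type $(s,\theta)\mapsto(\phi(s,\theta),\theta)$ with $\phi(0,\theta)=0$ and $\partial_s\phi>0$, cut off to the identity near $\partial A_\eps$, and such maps are visibly isotopic to the identity via $(s,\theta)\mapsto((1-t)s+t\phi(s,\theta),\theta)$. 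With that fix, your proof goes through.
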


\begin{proof}
We apply Moser's theorem to $\sigma_t:=(1-t)\sigma_\std+t\sigma$ to find a
diffeotopy $f_t:S^2\to S^2$ with $f_t^*\sigma_t=\sigma_\std$. Since
$\int_{D_\lh}\sigma_t=1/2$ for all $t$, the $\sigma_\std$-Lagrangians
$f_t^{-1}(E)$ all bound disks of $\sigma_\std$-area $1/2$. Hence
Banyaga's Theorem~\ref{thm:Banyaga} yields
$\sigma_\std$-symplectomorphisms $g_t:S^2\to S^2$ with
$g_t(E)=f_t^{-1}(E)$ and $h_t:=f_t\circ g_t$ is the desired
diffeotopy. 
\end{proof}

\begin{lemma}\label{lem:coiso-nbhd}
Let $\om$ be a symplectic form on $M=S^2\times S^2$ compatible with
the standard fibration $p_1:M\to S^2$. Let $\delta\subset S^2$ be an
embedded closed arc passing through $z_0$. Then every
symplectomorphism $h:(F,\sigma_\std)\to(F,\om|_F)$ extends to a
diffeomorphism $\psi$ between neighbourhoods of	
$p_1^{-1}(\delta)$ preserving the fibres over $\delta$ and such that
$\psi^*\om=\om_\std$. 
\end{lemma}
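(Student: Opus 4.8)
The statement to prove is Lemma~\ref{lem:coiso-nbhd}: a fibrewise symplectomorphism $h$ on the single fibre $F=p_1^{-1}(z_0)$ can be extended to a fibre-preserving symplectomorphism $\psi$ on a neighbourhood of $p_1^{-1}(\delta)$ intertwining $\om_\std$ and $\om$. The plan is to build $\psi$ in two stages: first handle the single fibre $F$ using a relative Moser/coisotropic-neighbourhood argument, then propagate along $\delta$ using parallel transport of the two symplectic connections.

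\textbf{Step 1: standardise on a neighbourhood of the fibre $F$.} Since $h:(F,\sigma_\std)\to(F,\om|_F)$ is a symplectomorphism, we first pick \emph{some} fibre-preserving diffeomorphism $\Psi_0$ of a neighbourhood of $F$ that restricts to $h$ on $F$; this is possible because both $\om_\std$ and $\om$ are compatible with $p_1$, so $F$ is a symplectic submanifold in both, and the normal bundle of $F$ in $M$ is trivial and two-dimensional. After replacing $\om_\std$ by $\Psi_0^*\om$ (call it $\om_1$), I have two symplectic forms $\om_1,\om$ on a neighbourhood $U$ of $F$ which agree on $TM|_F$: they agree on $TF$ by construction, they agree on the normal direction because both restrict nondegenerately to the same area form on the fibre, and the mixed terms can be matched after a further linear fibre-preserving adjustment in the normal bundle (here one uses that a Lagrangian/symplectic splitting of $TM|_F$ can be chosen compatibly). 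A relative version of Moser's theorem (applied with the fibre-preserving constraint, using that $\om-\om_1$ is exact near $F$ with a primitive vanishing on $F$, and that one can choose the Moser primitive to kill the vertical component so the Moser vector field is tangent to fibres) then yields a fibre-preserving diffeomorphism identifying $\om_1$ with $\om$ near $F$ and equal to the identity on $F$. Composing gives a fibre-preserving $\psi$ defined near $F$ with $\psi|_F=h$ and $\psi^*\om=\om_\std$.

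\textbf{Step 2: extend along the arc $\delta$ by parallel transport.} Now parametrise $\delta$ by $s\in[-1,1]$ with $\delta(0)=z_0$. Both $\om_\std$ and $\om$ are symplectic connections for $p_1$, so each has its own parallel transport along $\delta$: write $P_s$ for the $\om_\std$-parallel transport $F\to p_1^{-1}(\delta(s))$ and $Q_s$ for the $\om$-parallel transport. Since parallel transport of a symplectic connection is symplectic (as recorded in the excerpt), $P_s$ and $Q_s$ are symplectomorphisms onto their images. Define $\psi$ on $p_1^{-1}(\delta)$ by $\psi:=Q_s\circ h\circ P_s^{-1}$ on $p_1^{-1}(\delta(s))$; this is fibre-preserving over $\delta$, agrees with $h$ over $z_0$, and is a symplectomorphism from $(\om_\std|_{p_1^{-1}(\delta(s))})$ to $(\om|_{p_1^{-1}(\delta(s))})$ for each $s$. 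To upgrade this to an honest symplectomorphism between \emph{open} neighbourhoods of $p_1^{-1}(\delta)$ in $M$, one extends $\psi$ arbitrarily (fibre-preservingly) to a neighbourhood, pulls back $\om$, and applies the relative Moser argument of Step~1 once more — this time relative to the whole codimension-one submanifold $p_1^{-1}(\delta)$ (which is coisotropic for neither form individually but that is irrelevant; what matters is that $\om$ and $\psi^*\om_\std$ now agree on the tangent bundle of $M$ restricted to $p_1^{-1}(\delta)$, by the parallel-transport construction, so their difference is exact with a primitive vanishing to first order along $p_1^{-1}(\delta)$, and the Moser flow can be taken fibre-preserving).

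\textbf{Main obstacle.} The routine-looking but genuinely delicate point is ensuring at each stage that the Moser isotopy can be chosen \emph{fibre-preserving}: one must select the primitive $\beta$ of $\om-\psi^*\om_\std$ so that the associated Moser vector field $X_t$, defined by $\iota_{X_t}\om_t=-\beta$, is tangent to the fibres of $p_1$. This works because $\om-\psi^*\om_\std$ vanishes on $TM|_{p_1^{-1}(\delta)}$ (so $\beta$ can be taken to vanish there and hence, after a Poincaré-lemma choice in the normal slices, to have no $dp_1$-component), and because $\om_t$ restricted to each fibre is nondegenerate so $X_t$ is forced into the vertical distribution. I would also need to check the two ends of $\delta$ cause no trouble — they don't, since $\delta$ is a compact arc and everything is local near $p_1^{-1}(\delta)$. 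Everything else is a standard application of Moser's trick and the symplectic/coisotropic neighbourhood theorems.
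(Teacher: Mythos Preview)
Your Step~2 is exactly the paper's argument: define $\phi$ on $N:=p_1^{-1}(\delta)$ by conjugating $h$ with the two parallel transports, so that $\phi^*(\om|_N)=(\om_\std)|_N$, and then extend to a neighbourhood. The paper then simply invokes the \emph{coisotropic neighbourhood theorem} for $N$, which immediately gives $\psi$ with $\psi^*\om=\om_\std$ and $\psi|_N=\phi$; since $\phi$ already preserves fibres over $\delta$, so does $\psi$, and the proof is two sentences long.

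Two places where you make your life harder than necessary. First, you assert that $p_1^{-1}(\delta)$ ``is coisotropic for neither form individually'': this is false --- $N$ is a hypersurface in a symplectic $4$-manifold, hence automatically coisotropic for \emph{every} symplectic form. That is precisely why the coisotropic neighbourhood theorem applies in one stroke, and also why your hand-rolled Moser argument works (the homotopy-formula primitive $\beta$ vanishes pointwise on $N$, so the Moser flow fixes $N$; the claim that the forms agree on all of $TM|_N$ is neither true nor needed --- agreement on $TN$ suffices). Second, your Step~1 is redundant: once you have the parallel-transport map on all of $N$, the single-fibre case is already covered. Finally, the lemma only asks $\psi$ to preserve fibres \emph{over $\delta$}, not on the whole neighbourhood, so your ``main obstacle'' about forcing the Moser vector field to be vertical everywhere is not an obstacle at all --- the Moser flow fixes $N$ pointwise, which is all that is required.
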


\begin{proof}
Parallel transport in $N:=p_1^{-1}(\delta)$ with respect to $\om_\std$
from $p_1^{-1}(z)$ to $F$ and then with respect to $\om$
from $F$ to $p_1^{-1}(z)$ yields a fibre preserving diffeomorphism
$\phi:N\to N$ extending $h$ with $\phi^*(\om|_N)=(\om_\std)|_N$. By the
coisotropic neighbourhood theorem, $\phi$ extends to the desired
diffeomorphism $\psi$. 
\end{proof}

\begin{proof}[Proof of Proposition~\ref{prop:fibre-standard}]
By Lemma~\ref{Pspecialsymp}, there exists a diffeomorphism
$h:F\to F$ with $h(E)=E$ and $h^*(\om|_F)=\sigma_\std$.  
Let $\delta\subset E$ be an arc in the equator in the base
passing though $z_0$. By Lemma~\ref{lem:coiso-nbhd}, the
diffeomorphism $h$ extends to a diffeomorphism $\psi$ between neighbourhoods of
$p_1^{-1}(\delta)$ preserving the fibres over $\delta$ and such that
$\psi^*\om=\om_\std$. Thus the pullback fibration $\psi^*\FF_\std$ and the
pullback torus $\psi^{-1}(L_\std)$ coincide over $\delta$ with
$\FF_\std$ and $L_\std$, respectively (the latter holds because
$\psi^{-1}(L_\std)$ is obtained by parallel transport of
$\{z_0\}\times E$ along $\delta$). Since $h:F\to F$ is isotopic to the
identity through diffeomorphisms preserving $E$, we can restrict $\psi$ 
to a smaller neighbourhood $V=p_1^{-1}(V')$ of $F$ and extend it from
there to a diffeomorphism $\phi:M\to M$ preserving $L_\std$ (and which
equals the identity outside a larger neighbourhood of $F$). Then the
pullback $\phi^{-1}(\SS)$ satisfies $(\phi^*\om)|_V=\om_\std$ and
$\phi^{-1}(\FF)=\FF_\std$ on $p_1^{-1}(\delta)\cap V$.  

So far we have just put $\SS$ into a more convenient form by a
diffeomorphism, but now we will change it. Note that the fibres of  
$\FF_\std$ near $p_1^{-1}(\delta)\cap V$ are $C^1$-close to those of
$\phi^{-1}(\FF)$ and therefore symplectic for $\phi^*\om$. Hence we
can deform $\phi^{-1}(\FF)$ to a foliation $\ol\FF$, keeping it
$\om_\std$-symplectic and fixed on $p_1^{-1}(\delta)\cap V$ and outside
$V$, such that $\ol\FF=\FF_\std$ on a neighbourhood $U\subset V$ of
$F$. Thus we have constructed a homotopy from $\SS$ to
$\ol\SS=(\ol\FF,\phi^*\om,L_\std,\ol\Sigma,\ol\Sigma')$ such that 
$(\ol\FF,\phi^*\om)=(\FF_\std,\om_\std)$ on $U$. 
Finally, we apply Proposition~\ref{prop:fix-F-par} to this homotopy
outside the set $U$ to replace it by one with fixed foliation
$\FF_\std$ (as well as $L_\std$ which was fixed already). The end point
of this homotopy is the desired relative symplectic foliation
$\wt\SS$. 
\end{proof}

\begin{remark}
Even if in Proposition~\ref{prop:fibre-standard} the sections
$\Sigma,\Sigma'$ in $\SS$ are the standard sections $S_0,S_\infty$,
this will not be true for the sections in $\wt\SS$ unless the original 
sections were horizontal near $F$. This will be remedied in 
the following subsection. 
\end{remark}

\subsection{Standardisation near the sections}

Consider a relative symplectic fibration of the form
$\SS=(\FF_\std,\om,L_\std,S_0,S_\infty)$ with the the projections
$p_1,p_2:S^2\times S^2$ onto the two factors. 
 
\begin{definition}
We say that $\om$ is {\em split} on a neighbourhood 
$$
   W=(U_F\times S^2)\cup (S^2\times U_0)\cup (S^2\times U_\infty)
$$ 
of $F\cup S_0\cup S_\infty$ if there exist symplectic forms
$\sigma_0,\sigma_\infty$ on $S^2$ such that 
\begin{equation}\label{eq:split}
\begin{aligned}
   \omega &= p_1^*\sigma_0+p_2^*\sigma_\std\quad \text{on the set}\quad
   W_0=(U_F\times S^2) \cup (S^2\times U_0), \quad\text{and}\cr
   \omega &= p_1^*\sigma_\infty+p_2^*\sigma_\std \quad\text{on the set}\quad
   W_\infty=(U_F\times S^2)\cup(S^2\times U_\infty).
\end{aligned}
\end{equation}
\end{definition}

Here the forms $\sigma_0$ and $\sigma_\infty$ may differ, but
they agree on $U_F$. Note that if $\om$ is split, then in particular
the sections $S_\infty$ and $S_0$ are horizontal. Moreover, parallel
transport of the symplectic connection defined by $\om$ equals the
identity on the region where $\om$ is split.   

The following is the main result of this section. 

\begin{prop}[Standardisation near a fibre and the sections]\label{prop:section-standard}
Every relative symplectic fibration $\SS=(\FF_\std,\om,L_\std,S_0,S_\infty)$
is homotopic to one of the form
$\wt\SS=(\FF_\std,\wt\om,L_\std,S_0,S_\infty)$ such that
$\wt\om$ is split on a neighbourhood $W$ of $F\cup S_0\cup S_\infty$. 
\end{prop}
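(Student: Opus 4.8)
The plan is to first apply Proposition~\ref{prop:fibre-standard} to put $\SS$ in the form $(\FF_\std,\om,L_\std,\Sigma,\Sigma')$ with $\om=\om_\std$ near the fibre $F$; then to straighten the two sections to $S_0$ and $S_\infty$ by a diffeomorphism (using that they are horizontal near $F$ after a further small homotopy), and finally to make $\om$ actually split in fibre-product coordinates near $F\cup S_0\cup S_\infty$. The only genuinely new work beyond Proposition~\ref{prop:fibre-standard} is the last part, so I describe it in detail.

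\textbf{Step 1: straightening the sections.} After Proposition~\ref{prop:fibre-standard} we have $\wt\om=\om_\std$ on a neighbourhood $U_F=p_1^{-1}(U_F')$ of $F$. In particular the two sections $\Sigma,\Sigma'$ are $\wt\om$-symplectic sections of $p_1$ meeting each fibre over $U_F'$ in a single point; using the isotopy extension theorem fibrewise (as in the proof of Proposition~\ref{prop:fix-F}) we find $\phi\in\Diff_\id(M)$, supported away from $F$ and preserving $\FF_\std$ and $L_\std$, with $\phi^{-1}(\Sigma)=S_0$, $\phi^{-1}(\Sigma')=S_\infty$; since $\wt\om=\om_\std$ near $F$ and we only move points outside a neighbourhood of $F$, this does not disturb the already-standardised region. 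Pulling back and renaming, we may assume $\SS=(\FF_\std,\om,L_\std,S_0,S_\infty)$ with $\om=\om_\std$ near $F$. Since $S_0,S_\infty$ are $\om$-symplectic, by a small homotopy of $\om$ (convex interpolation with the restriction near the sections, then Moser/Banyaga as in Proposition~\ref{prop:fix-om-par} and Lemma~\ref{Pspecialsymp} to restore $L_\std$) we may in addition assume $S_0$ and $S_\infty$ are $\om$-horizontal near themselves, i.e.\ $\om$ restricted to a neighbourhood of $S_0\cup S_\infty$ makes both sections horizontal.

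\textbf{Step 2: making $\om$ split near $S_0$.} Near $S_0=S^2\times\{S\}$ write $p_2$-fibre coordinates; since $S_0$ is horizontal, $\ker d p_2\subset TM$ along $S_0$ is the $\om$-orthogonal complement of $TS_0$, and by the symplectic (coisotropic) neighbourhood theorem applied to the coisotropic submanifold $S_0$ together with the compatible fibration $p_1$, there is a diffeomorphism of a neighbourhood of $S_0$ onto $S^2\times U_0$, preserving $p_1$, identifying $\om$ with $p_1^*\sigma_0+p_2^*(\text{a form on }U_0)$ where $\sigma_0:=\om|_{S_0}$ has total area $1$; shrinking and rescaling the fibre coordinate we arrange the fibre form to be $\sigma_\std$ there. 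A parallel argument near $S_\infty$ gives $\sigma_\infty:=\om|_{S_\infty}$ and the splitting $p_1^*\sigma_\infty+p_2^*\sigma_\std$ on $S^2\times U_\infty$. Finally, on $U_F\times S^2$ we already have $\om=\om_\std=p_1^*\sigma_\std+p_2^*\sigma_\std$, so after possibly shrinking $U_F'$ to make $\sigma_0$ and $\sigma_\infty$ agree with $\sigma_\std$ on $U_F'$ (they all restrict to $\sigma_\std$ on the fibres, and one adjusts by a further Moser isotopy supported near $F$ to make the base forms match), the three local models patch on the overlaps $U_F\times U_0$ and $U_F\times U_\infty$, giving the split form on $W=(U_F\times S^2)\cup(S^2\times U_0)\cup(S^2\times U_\infty)$.

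\textbf{Step 3: converting diffeomorphisms into a homotopy.} All of the above produced the split form $\wt\om$ either by a diffeomorphism pullback (preserving $\FF_\std$ and $L_\std$) or by a small homotopy of $\om$ accompanied by Moser isotopies. The former are handled exactly as at the end of the proof of Proposition~\ref{prop:fibre-standard}: apply Proposition~\ref{prop:fix-F-par} to the tautological homotopy $\om_t:=\phi_t^*\om$ to trade the moving foliation for a fixed $\FF_\std$ (and $L_\std$, which is fixed throughout), so that the net effect is a genuine homotopy of relative symplectic fibrations from $\SS$ to $\wt\SS=(\FF_\std,\wt\om,L_\std,S_0,S_\infty)$. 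The main obstacle is bookkeeping in Step 2: one must keep the Lagrangian torus $L_\std$ monotone and fixed while applying the coisotropic neighbourhood theorem near the sections, which is why it is essential that the sections are disjoint from $L$ (so the neighbourhoods of $S_0\cup S_\infty$ on which we modify $\om$ can be taken disjoint from $L_\std$) and why all the $L_\std$-restoring isotopies via Lemma~\ref{Pspecialsymp} are supported near $F$, where $\om$ is already standard.
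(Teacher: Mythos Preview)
Your outline is the right shape, but there is a genuine gap at the heart of Steps 1 and 2: the passage from ``$S_0,S_\infty$ are symplectic sections'' to ``$S_0,S_\infty$ are $\om$-horizontal and $\om$ is split near them.''

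In Step 1 you write that a ``small homotopy of $\om$ (convex interpolation with the restriction near the sections)'' makes $S_0,S_\infty$ horizontal. A pointwise calculation does show that along $S_0$ the linear interpolation between $\om$ and a split model $p_1^*\sigma_0+p_2^*\sigma_\std$ stays nondegenerate. The problem is \emph{closedness}: to keep $\om$ unchanged away from $S_0$ you must multiply $\om'-\om$ by a cutoff $\rho$, and then $d(\rho(\om'-\om))=d\rho\wedge(\om'-\om)$ has no reason to vanish. So the cut-off interpolation is not a closed form, and Moser/Banyaga do not apply. This is exactly why the paper remarks that standardisation near a section ``takes a large deformation,'' and why it develops Lemmas~\ref{lem:symp-condition}--\ref{lem:symp-standard}: rather than deforming $\om$, the paper deforms the \emph{foliation} through symplectic leaves to make it orthogonal to $S_0$ (an open condition, no closedness constraint), then applies the symplectic neighbourhood theorem, and only afterwards pulls the foliation back to $\FF_\std$ by a diffeomorphism (which is what produces the merely split, not standard, form~\eqref{eq:split}).

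Your Step 2 compounds the issue. First, $S_0$ is a \emph{symplectic} submanifold, not coisotropic (in dimension four, a coisotropic surface is Lagrangian). Second, even granting horizontality, the symplectic neighbourhood theorem identifies a tubular neighbourhood of $S_0$ with a neighbourhood of the zero section in its symplectic normal bundle, but it does \emph{not} in general produce a diffeomorphism preserving a prescribed transverse fibration $p_1$. That additional compatibility is precisely what Lemma~\ref{lem:symp-standard} is engineered to achieve, via the explicit graphical deformation controlled by Lemmas~\ref{lem:symp-condition}--\ref{lem:symp-function}. (There is also a minor slip in Step 1: a diffeomorphism supported away from $F$ cannot move $\Sigma$ to $S_0$ unless $\Sigma$ already meets $F$ at $(z_0,S)$; the paper fixes this first by a fibre symplectomorphism $(z,w)\mapsto(z,g(w))$.)
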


The proof of this proposition will occupy the remainder of this section. 
Standardisation near a symplectic section is more subtle than near a 
fibre because the section need not be horizontal, so it takes a large
deformation to make it symplectically orthogonal to the fibres. 

We first consider the local
situation in $\R^4\cong\C^2$ with the standard symplectic form $\Om_0=dx\wedge
dy+du\wedge dv$ in coordinates $z=x+iy$, $w=u+iv$. Let $S=\{w=f(z)\}$
be the graph over the $z$-plane of a smooth function $f=g+ih:\R^2\to\R^2$
with $f(0)=0$. Orient $S$ by projection onto the $z$-plane. The
pullback of $\Om_0$ under the embedding $F(z)=\bigl(z,f(z)\bigr)$ 
equals 
$$
   F^*\Om_0=dx\wedge dy+dg\wedge dh = (1+\det Df)dx\wedge dy.  
$$
Thus $S$ is symplectic (with the given orientation) iff 
$$
   1+\det Df>0.
$$ 
For a smooth function $\phi:[0,\infty)\to\R$ consider the new function
$$
   \tilde f(z):=\phi(|z|)f(z).
$$ 
We now derive the condition on $\phi$ such that the graph of $\tilde
f$ is symplectic. We will see that it suffices to do this for linear
maps $f$, so suppose that $f(z)=Az$ for a $2\times 2$ matrix
$A$. We compute for $r:=|z|>0$: 
\begin{align*}
   D\tilde f(z) &= \phi(r)Df(z) +
   \phi'(r)f(z)\left(\frac{z}{r}\right)^t = \phi(r)A +
   \frac{\phi'(r)}{r}Azz^t \cr
   &= A\Bigl(\phi(r)\Id+\frac{\phi'(r)}{r}zz^t\Bigr). 
\end{align*}
Since
\begin{align*}
   \det\Bigl(\phi(r)\Id+\frac{\phi'(r)}{r}zz^t\Bigr) 
   &= \det\left(\begin{matrix} \phi+\frac{\phi'}{r}x^2 &
   \frac{\phi'}{r}xy \\ \frac{\phi'}{r}xy & \phi+\frac{\phi'}{r}y^2
   \end{matrix}\right) \cr
   &= \phi^2+\frac{\phi\phi'}{r}(x^2+y^2) = \phi^2+r\phi\phi',
\end{align*}
we have $\det D\tilde f = (\phi^2+r\phi\phi')\det A$. This proves 

\begin{lemma}\label{lem:symp-condition}
Let $f(z)=Az$ be a linear function $\R^2\to\R^2$ with $1+\det A\geq
\eps>0$. Let $\phi:[0,\infty)\to\R$ be a smooth function with
$\phi(0)=\phi'(0)=0$. Then the graph of $\tilde f(z):=\phi(|z|)f(z)$
is symplectic provided that for all $r>0$,
\begin{equation}\label{eq:graph-symp}
   0\leq \phi(r)^2+r\phi(r)\phi'(r) < \frac{1}{1-\eps}.
\end{equation}
\end{lemma}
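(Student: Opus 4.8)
The plan is to read the statement off the determinant computation carried out just above, and then to verify one elementary scalar inequality. Writing $\tilde F(z)=(z,\tilde f(z))$ and setting $t=t(r):=\phi(r)^2+r\phi(r)\phi'(r)$, the identities $F^*\Omega_0=(1+\det Df)\,dx\wedge dy$ and $\det D\tilde f=(\phi^2+r\phi\phi')\det A$ established in the text combine to give, for $z\neq 0$,
\[
   \tilde F^*\Omega_0=\bigl(1+t(r)\det A\bigr)\,dx\wedge dy .
\]
Hence the graph of $\tilde f$, oriented by projection to the $z$-plane, is symplectic exactly when $1+t(r)\det A>0$ for all relevant $r>0$, and it remains to deduce this from $0\le t(r)<\tfrac{1}{1-\eps}$.

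Here I would simply split on the sign of $\det A$. If $\det A\ge 0$, then $t(r)\ge 0$ gives $1+t(r)\det A\ge 1>0$. If $\det A<0$, then $1+\det A\ge\eps$ forces $\eps<1$ and $\det A\ge-(1-\eps)$, so, using $0\le t(r)<\tfrac{1}{1-\eps}$,
\[
   1+t(r)\det A\;\ge\;1-t(r)(1-\eps)\;>\;1-\tfrac{1}{1-\eps}(1-\eps)\;=\;0 .
\]
This case analysis is really the whole content of the lemma, and the only point that requires a little care: the bound $\tfrac1{1-\eps}$ in \eqref{eq:graph-symp} is precisely what keeps $1-t(1-\eps)$ positive when $\det A$ is as negative as $1+\det A\ge\eps$ permits, while the nonnegativity of $t$ is what is needed in the opposite regime $\det A\ge 0$.

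It remains to comment on $r=0$, which I do not expect to pose any real obstacle. Since $\phi$ is smooth with $\phi(0)=\phi'(0)=0$, Taylor's theorem gives $\phi(r)=O(r^2)$ and $\phi'(r)=O(r)$; hence $t(r)\to 0$ as $r\to 0^+$, the matrix $D\tilde f(z)=A\bigl(\phi(|z|)\Id+\tfrac{\phi'(|z|)}{|z|}zz^t\bigr)$ extends continuously by $D\tilde f(0)=0$, so $\tilde f$ is $C^1$ near the origin, its graph is tangent there to the symplectic $z$-plane, and $1+t(r)\det A\to 1>0$ so nondegeneracy persists up to $r=0$. (In the applications $\phi$ is chosen flat enough at $0$ that $z\mapsto\phi(|z|)$ is smooth, making the graph a smooth symplectic submanifold; this regularity issue is orthogonal to the estimate above and I would not belabour it.)
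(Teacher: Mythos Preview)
Your proof is correct and follows the same approach as the paper: the determinant identity $\det D\tilde f=(\phi^2+r\phi\phi')\det A$ is computed in the text immediately preceding the lemma, and the paper simply says ``This proves'' and states the result, leaving the inequality $1+t(r)\det A>0$ as evident. You make that step explicit via the sign-of-$\det A$ case split, which is exactly the intended reasoning, and your remarks on $r=0$ and the smoothness of $z\mapsto\phi(|z|)$ address a regularity point the paper does not spell out.
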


\begin{lemma}\label{lem:symp-function}
For every $0<\eps<1$ and $\delta>0$ there exists a smooth family of 
nondecreasing functions $\phi_s:[0,\infty)\to[0,1]$, $s\in[0,1]$,
satisfying  (\ref{eq:graph-symp}) such that $\phi_s(r)=s$ for
$r\leq\delta$ and $\phi_s(r)=1$ for $r\geq 2\delta/\sqrt{\eps}$.  
\end{lemma}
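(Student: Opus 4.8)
The plan is to trade the nonlinear inequality (\ref{eq:graph-symp}) for a linear one by a change of variable, check that the required ``budget'' is met precisely because $\eps>0$, write down an explicit piecewise-linear model, and then smooth it. For the \textbf{reformulation}: for any smooth $\phi$ one has the identity $\phi(r)^2+r\phi(r)\phi'(r)=\frac{1}{2r}\frac{d}{dr}\bigl(r^2\phi(r)^2\bigr)$. Hence, writing $\rho=r^2$ and $W(\rho):=\rho\,\phi(\sqrt\rho)^2=r^2\phi(r)^2$, the inequality (\ref{eq:graph-symp}) becomes simply $0\le\frac{dW}{d\rho}(\rho)<\frac{1}{1-\eps}$. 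A nondecreasing $\phi$ with values in $[0,1]$ corresponds to $W(\rho)/\rho=\phi(\sqrt\rho)^2$ being nondecreasing and $\le 1$ (so $0\le W(\rho)\le\rho$), and then $\frac{dW}{d\rho}\ge 0$ is automatic. So I only have to produce a smooth family $W_s$ with $0\le W_s(\rho)\le\rho$, with $W_s(\rho)/\rho$ nondecreasing in $\rho$, with $\frac{dW_s}{d\rho}<\frac1{1-\eps}$, and with $W_s(\rho)=s^2\rho$ for $\rho\le\delta^2$ and $W_s(\rho)=\rho$ for $\rho\ge 4\delta^2/\eps$, and then set $\phi_s(r):=\sqrt{W_s(r^2)}\,/\,r$ (with $\phi_s(0):=s$).

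\textbf{The budget.} On $[\delta^2,\,4\delta^2/\eps]$ (of length $\delta^2(4-\eps)/\eps$) the function $W_s$ must climb from $s^2\delta^2$ to $4\delta^2/\eps$, i.e.\ by at most $4\delta^2/\eps$; a slope below $\frac1{1-\eps}$ permits a total increase of up to $\frac{1}{1-\eps}\cdot\frac{\delta^2(4-\eps)}{\eps}=\frac{\delta^2(4-\eps)}{\eps(1-\eps)}$, and the inequality $\frac{4\delta^2}{\eps}<\frac{\delta^2(4-\eps)}{\eps(1-\eps)}$ is equivalent to $4(1-\eps)<4-\eps$, i.e.\ to $\eps>0$; in fact it leaves a uniform slack $\frac{3\delta^2}{1-\eps}>0$. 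This slack is the whole point: it allows a model whose slope stays bounded strictly away from $\frac1{1-\eps}$, with extra room to spare.

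\textbf{Explicit model and smoothing.} Fix $\kappa$ with $\frac{4}{4-\eps}<\kappa<\frac1{1-\eps}$ (a nonempty range, again because $\eps>0$) and set $\rho^*_s:=\delta^2\frac{\kappa-s^2}{\kappa-1}$, so that $\delta^2\le\rho^*_s\le\rho^*_0=\frac{\kappa\delta^2}{\kappa-1}<4\delta^2/\eps$. Let $W^0_s$ be the continuous piecewise-linear function with slope $s^2$ on $[0,\delta^2]$, slope $\kappa$ on $[\delta^2,\rho^*_s]$, and slope $1$ on $[\rho^*_s,\infty)$; a one-line computation gives $W^0_s(\rho^*_s)=\rho^*_s$, $0\le W^0_s(\rho)\le\rho$, $W^0_s(\rho)/\rho$ nondecreasing (it equals $s^2$, then $\kappa-\frac{\delta^2(\kappa-s^2)}{\rho}$, then $1$), and all slopes in $[0,\frac1{1-\eps})$. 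Now round the two corners inside small right-neighbourhoods of $\delta^2$ and of $\rho^*_s$ — to the right of $\delta^2$, since $W_s\equiv s^2\rho$ is prescribed on $[0,\delta^2]$ — keeping $0\le W_s(\rho)\le\rho$, $W_s(\rho)/\rho$ nondecreasing, and $\frac{dW_s}{d\rho}\in[0,\frac1{1-\eps})$; this is possible because the corner slopes ($s^2<\kappa$ and $\kappa>1$) vary monotonically across the corner, and, the $\rho^*_s$-corner being reached strictly before $4\delta^2/\eps$, there is room to close the gap to the line $W=\rho$ and still reach it before $4\delta^2/\eps$. Carry this out depending smoothly on $s$, the rounding degenerating as $s\to1$ (where $\rho^*_1=\delta^2$ and $W_1(\rho)\equiv\rho$, so nothing is rounded). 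This yields a smooth family $W_s$, hence $\phi_s$, with all the listed analytic properties and the correct behaviour on $\{r\le\delta\}$ and $\{r\ge 2\delta/\sqrt\eps\}$.

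\textbf{Joint smoothness -- the main obstacle.} What remains, and is the only genuinely delicate point, is that $(r,s)\mapsto\phi_s(r)=\sqrt{W_s(r^2)}/r$ be $C^\infty$ on $[0,\infty)\times[0,1]$. Away from the set $\{s=0,\ 0\le r\le\delta\}$ one has $W_s(r^2)>0$ and smoothness is immediate; on that set $\phi_s\equiv s$. The problematic locus is $(r,s)=(\delta,0)$: there $W_s(\delta^2)=s^2\delta^2$ vanishes as $s\to0$, and a careless model makes $\phi_0(r)$ behave like $\sqrt{r-\delta}$, or produces a $\partial_s$-jump of the type $\partial_s\sqrt{s^2+(\text{flat positive term})}$. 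The cure is to perform the rounding near $\rho=\delta^2$ so that there $W_s(\rho)=\rho\bigl(s+h_s(\sqrt\rho)\bigr)^2$ for a smooth $h_s\ge0$ vanishing to infinite order at $\sqrt\rho=\delta$ with an explicitly chosen (e.g.\ $e^{-1/x}$-type) profile; then $\phi_s=s+h_s(r)$ near $\delta$ is manifestly smooth, and the budget slack of the second step leaves ample room for this ``flat start'' of the slope increase (the analogous flatness at $\rho=4\delta^2/\eps$ is automatic, since $\phi_s\equiv1$ there). I expect this bookkeeping -- smooth in $s$, compatible with the square root near $s=0$, and controlled at the degenerate value $s=1$ -- to be the main obstacle; the mathematical content lies entirely in the elementary budget inequality of the second step.
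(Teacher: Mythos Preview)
Your approach is essentially the same as the paper's: both linearise the inequality via the substitution $\psi(r)=r^2\phi(r)^2$ (you further change to $\rho=r^2$ so the slope bound becomes constant), check the budget, build a piecewise model, and smooth. The paper writes the middle piece as the explicit ODE solution $\psi'=\frac{2r}{1-\eps/4}$ (corresponding to your $\kappa=\frac{4}{4-\eps}$, the boundary case of your range) and handles the family in one line by smoothing $\max(s,\phi_0)$; you are more careful about joint smoothness near $(r,s)=(\delta,0)$, which the paper does not address.
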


\begin{proof}
For $r>0$ define $\psi(r)$ by $\psi(r):=r^2\phi(r)^2$. Then
$\psi'=2r(\phi^2+r\phi\phi')$, so (\ref{eq:graph-symp}) is equivalent
to 
$$
  \psi'(r)<\frac{2r}{1-\eps}.
$$
This will be satisfied if $\psi$ solves the differential equation 
$$
   \psi'(r) = \frac{2r}{1-\eps/4}.
$$
Then $\psi(r)=r^2/(1-\eps/4)+c$ for some constant $c$ and 
$$
   \phi^2(r) = \frac{1}{1-\eps/4}+\frac{c}{r^2}.
$$
We fix the constant $c$ by $\phi(\delta)=0$ to
$c=-\delta^2/(1-\eps/4)$ and obtain
$$
   \phi^2(r)=\frac{1-\delta^2/r^2}{1-\eps/4}.
$$
This is an increasing function with $\phi(\delta)=0$ and $\phi(\gamma)=1$ at the point
$\gamma=2\delta/\sqrt{\eps}$. Now observe that if a solution of
(\ref{eq:graph-symp}) satisfies $\phi(r_0)\geq 0$ and $\phi'(r_0)\geq
0$ for some $r_0>0$, then we can decrease the slope to $0$ near $r_0$ and
extend $\phi$ by $\phi(r)=\phi(r_0)$ for $r\geq r_00$ (or $r\leq r_0$) to
a smooth solution of (\ref{eq:graph-symp}). Applying this procedure
at $r_0=\delta$ and $r_0=\gamma$ yields the desired function $\phi_0$
for $s=0$. For $s>0$, we obtain $\phi_s$ by smoothing the function
$\max(s,\phi_0)$. 
\end{proof}

\begin{lemma}\label{lem:symp-standard}
Let $\Lambda\subset\R^2$ be compact and
$(S^\lambda)_{\lambda\in\Lambda}$ be a smooth foliation of a region in
$(\R^4,\Om_0)$ by symplectic surfaces $S^\lambda$ intersecting
the symplectic plane $\{0\}\times\R^2$ transversely in $(0,\lambda)$. Then for every
neighbourhood $W\subset\R^4$ of $\{0\}\times\Lambda$ there exists a neighbourhood
$U\subset W$ of $\{0\}\times\Lambda$ and a family of foliations
$(S^\lambda_s)_{s\in[0,1],\lambda\in\Lambda}$ with the following
properties (see Figure~\ref{fig:Slambda}):
\begin{enumerate}
\item $S^\lambda_0=S^\lambda$ and $S^\lambda_s=S^\lambda$ outside $W$; 
\item $S^\lambda_s$ is symplectic and intersects $\{0\}\times\R^2$
transversely in $(0,\lambda)$;
\item $S^\lambda_1=\R^2\times\{\lambda\}$ in $U$.
\end{enumerate}
Moreover, for every $\lambda$ with $S^\lambda=\R^2\times\{\lambda\}$
in $W$ we have $S^\lambda_s=S^\lambda$ for all $s$. 
\end{lemma}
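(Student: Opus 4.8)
The plan is to write each leaf near $(0,\lambda)$ as a graph over the $z$-plane $\R^2$, apply the rescaling construction of Lemma~\ref{lem:symp-function} fibrewise in $\lambda$, and shrink the transition radius $\delta$ until the nonlinearity of the graphs becomes negligible against the strict, $\delta$-independent inequality~(\ref{eq:graph-symp}).

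First I would set up the graphs. Since each $S^\lambda$ meets $\{0\}\times\R^2$ transversely at $(0,\lambda)$ and both are $2$-dimensional in $\R^4$, the projection $(z,w)\mapsto z$ restricts near $(0,\lambda)$ to a diffeomorphism of $S^\lambda$ onto a ball; so there is a smooth map $F(z,\lambda)$, defined for $|z|\le\rho_0$ and $\lambda$ in a relatively compact open neighbourhood of $\Lambda$, with $F(0,\lambda)=\lambda$ and $S^\lambda=\{(z,F(z,\lambda)):|z|\le\rho_0\}$ there; by compactness $\rho_0$ can be taken uniform, and after shrinking it the graph lies in $W$ for every $\lambda$. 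Orienting $S^\lambda$ by the $z$-projection, symplecticity reads $1+\det D_zF>0$, so after one more shrinking of $\rho_0$ there is $\eps\in(0,1)$ with $1+\det D_zF(z,\lambda)\ge\eps$ on $\{|z|\le\rho_0\}\times\Lambda$. Put $f(z,\lambda):=F(z,\lambda)-\lambda$, so $f(0,\lambda)\equiv0$, and feed $\eps$ and a small $\delta$ (with $2\delta/\sqrt\eps\le\rho_0$, to be fixed later) into Lemma~\ref{lem:symp-function} to obtain nondecreasing $\phi_\sigma\colon[0,\infty)\to[0,1]$, $\sigma\in[0,1]$, with $\phi_\sigma\equiv\sigma$ on $[0,\delta]$, $\phi_\sigma\equiv1$ on $[2\delta/\sqrt\eps,\infty)$, all obeying~(\ref{eq:graph-symp}). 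I then define $S^\lambda_s$ to be the graph of $z\mapsto\lambda+\phi_{1-s}(|z|)\,f(z,\lambda)$ over $|z|\le\rho_0$, glued to the unchanged $S^\lambda$ over $|z|\ge\rho_0$; the gluing is smooth and trivial on $\{|z|\ge2\delta/\sqrt\eps\}$ since $\phi_{1-s}\equiv1$ there. Then $S^\lambda_s$ differs from $S^\lambda$ only inside the tube $\{|z|<2\delta/\sqrt\eps\}\times\R^2\subset W$ (property~(i)); $\phi_1\equiv1$ gives $S^\lambda_0=S^\lambda$; $\phi_0\equiv0$ on $[0,\delta]$ gives $S^\lambda_1=\R^2\times\{\lambda\}$ over $|z|\le\delta$; and each $S^\lambda_s$ is a graph near $(0,\lambda)$, hence transverse there to $\{0\}\times\R^2$, so all of~(ii) except symplecticity is automatic.

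The crux, and what I expect to be the only real point, is symplecticity of $S^\lambda_s$. Over $|z|\le\delta$ the graph is that of $(1-s)f(\cdot,\lambda)$, and $1+(1-s)^2\det D_zf\ge\min\{1,\,1+\det D_zf\}\ge\eps$. Over the annulus $\delta\le|z|\le2\delta/\sqrt\eps$ I would mimic the computation preceding Lemma~\ref{lem:symp-condition}: with $r=|z|$, $D_z[\phi_{1-s}(r)f]=\phi_{1-s}(r)D_zf+\tfrac{\phi_{1-s}'(r)}{r}f\,z^{t}$, and the Taylor estimates $f(z,\lambda)=D_zf(0,\lambda)z+O(r^2)$, $D_zf(z,\lambda)=D_zf(0,\lambda)+O(r)$ (uniform in $\lambda$), combined with $|\phi_{1-s}'|=O(1/\delta)$ and $r=O(\delta)$ on this annulus, give $D_z[\phi_{1-s}(r)f]=D_zf(0,\lambda)\bigl(\phi_{1-s}(r)\Id+\tfrac{\phi_{1-s}'(r)}{r}zz^{t}\bigr)+O(\delta)$, hence $\det D_z[\phi_{1-s}(r)f]=\bigl(\phi_{1-s}(r)^2+r\phi_{1-s}(r)\phi_{1-s}'(r)\bigr)\det D_zf(0,\lambda)+O(\delta)$, all with $\delta$-independent implied constants (the construction of the $\phi_\sigma$ is scale-invariant in $\delta$). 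By~(\ref{eq:graph-symp}) and $1+\det D_zf(0,\lambda)\ge\eps$ the leading term $1+\bigl(\phi_{1-s}^2+r\phi_{1-s}\phi_{1-s}'\bigr)\det D_zf(0,\lambda)$ is bounded below by a constant $c>0$ independent of $z,s,\lambda,\delta$ — here one uses the \emph{strict} inequality in~(\ref{eq:graph-symp}) to absorb a possibly negative $\det D_zf(0,\lambda)\in[\eps-1,0)$. Fixing $\delta$ small enough that the error term is less than $c/2$ then yields $1+\det D_z[\phi_{1-s}(r)f]>0$, i.e.~$S^\lambda_s$ is symplectic, completing~(ii).

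It remains to check the foliation structure and the last two assertions. With $\delta$ fixed, the map $\Psi_s(z,\lambda):=(z,\lambda+\phi_{1-s}(|z|)f(z,\lambda))$ has $z$-component $z$ and $\lambda$-derivative $(1-\phi_{1-s}(|z|))\Id+\phi_{1-s}(|z|)D_\lambda F(z,\lambda)$, which equals $\Id$ at $z=0$ (since $D_\lambda F(0,\lambda)=\Id$) and hence is invertible on a smaller tube, uniformly in $s$ and $\lambda$ by compactness; injectivity on a sufficiently small tube is then standard, so $\{S^\lambda_s\}_\lambda$ is indeed a foliation of a neighbourhood of $\{0\}\times\Lambda$ depending smoothly on $s$. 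Choosing $U$ to be a sufficiently small neighbourhood of $\{0\}\times\Lambda$ contained in $\{|z|<\delta\}\times(\text{a neighbourhood of }\Lambda)$ — small enough that only leaves $S^\lambda$ with $\lambda$ near the relevant base point enter $U$, and these are flat over $|z|\le\delta$ — gives $S^\lambda_1\cap U=(\R^2\times\{\lambda\})\cap U$, which is~(iii). Finally, if $S^\lambda=\R^2\times\{\lambda\}$ throughout $W$ then $F(\cdot,\lambda)\equiv\lambda$ on $\{|z|\le\rho_0\}$, so $f(\cdot,\lambda)\equiv0$ and $S^\lambda_s=S^\lambda$ for all $s$, which is the concluding remark.
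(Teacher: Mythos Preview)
Your proof is correct and follows a genuinely different route from the paper. The paper first performs a preliminary $C^1$-small perturbation of the leaves inside $W$ so that each $f^\lambda$ becomes \emph{linear}, $f^\lambda(z)=A^\lambda z$; after that, Lemma~\ref{lem:symp-condition} applies verbatim and the symplecticity of $S^\lambda_s$ is immediate. You instead keep the nonlinear $f$, apply the rescaling $\phi_{1-s}(|z|)f$ directly, and control the departure from the linear case by Taylor expansion together with the scale-invariance $\phi_\sigma(r)=\tilde\Phi_\sigma(r/\delta)$ of the cut-offs, which gives $|\phi'_\sigma|=O(1/\delta)$ uniformly; then you shrink $\delta$ until the $O(\delta)$ error is beaten by the uniform positive lower bound coming from the strict inequality in~(\ref{eq:graph-symp}). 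The paper's approach reduces the problem to clean linear algebra at the cost of an extra preliminary homotopy step (which it glosses over); yours is more direct but requires the quantitative error analysis you carry out.

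Two small points worth tightening. First, your appeal to the ``strict inequality in~(\ref{eq:graph-symp})'' is really an appeal to a \emph{uniform} upper bound $\phi_{1-s}^2+r\phi_{1-s}\phi_{1-s}'\le K<1/(1-\eps)$ independent of $\delta$; this does hold, by the scale-invariance you invoke together with compactness in $(\sigma,\rho)$, and it is what yields the $\delta$-independent constant $c>0$. Second, in the foliation check you write that $D_\lambda\Psi_s$ is invertible ``on a smaller tube''; to ensure the deformed leaves are disjoint throughout the \emph{entire} modification region you should note that, after possibly shrinking $\delta$ once more, the modification tube $\{|z|<2\delta/\sqrt\eps\}$ is contained in the region where $D_\lambda F$ is uniformly close to $\Id$, so the immersion and injectivity hold on the whole of it (not merely on a smaller neighbourhood). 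With these clarifications your argument is complete.
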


\begin{figure}[h!]
\centering
\psfrag{l}{}
\psfrag{R2}{$\{0\}\times\mathbb{R}^2$}
\psfrag{le}{after}
\psfrag{ls}{deformation}
\psfrag{Sn}{$S^\lambda_1$}
\psfrag{S}{$S^\lambda=S^\lambda_0$}
\psfrag{l}{$(\lambda,0)$}
\psfrag{ss}{$S^\lambda_s$}
\psfrag{s1}{$S^\lambda_1$}

 \includegraphics[width=12cm]{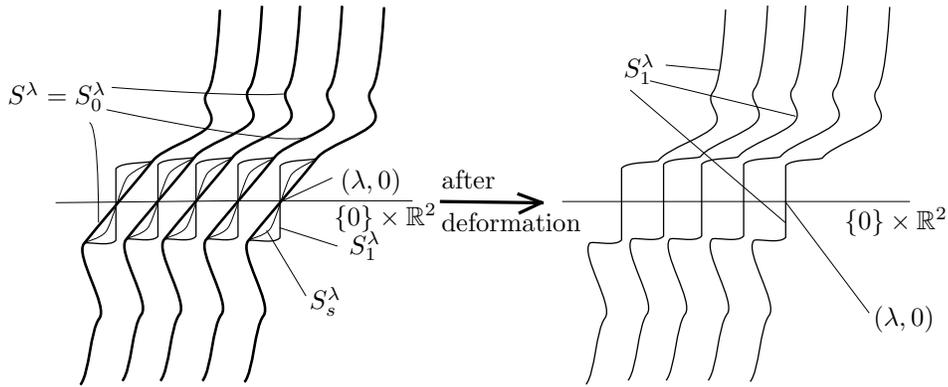}
 \caption{The family of foliations $S_s^\lambda$}
 \label{fig:Slambda}
\end{figure}

\begin{proof}
After shrinking $W$, we may assume that in $W$ each surface can be
written as a graph $S^\lambda=\{w=\lambda+f^\lambda(z)\}$ over the
$z$-plane with $f^\lambda(0)=0$. After a $C^1$-small perturbation of
the surfaces in $W$ (which keeps them symplectic) we may assume that
the $f^\lambda$ are linear functions $f^\lambda(z)=A^\lambda z$. Symplecticity implies $\det
A^\lambda>-1$. Since $\Lambda$ is compact, there exists an
$\eps>0$ with $\det A^\lambda\geq -1+\eps$ in $W$ for all
$\lambda$. Moreover, we may assume that the $\eps$-neighbourhood of
$\Lambda$ is contained in $W$. Pick $\delta>0$ so small that 
$2\delta/\sqrt{\eps}<\eps$. Let $\phi_s:[0,\infty)\to[0,1]$,
$s\in[0,1]$, be the functions of Lemma~\ref{lem:symp-function} and
define $f^\lambda_s(z):=\phi_{1-s}(|z|)f^\lambda(z)$. By
Lemma~\ref{lem:symp-condition}, the graph $S^\lambda_s$ of
$f^\lambda_s$ satisfies conditions (i)-(iii) of the proposition, where
$U$ is the $\delta$-neighbourhood of $\Lambda$. Note that if
$S^\lambda=\R^2\times\{\lambda\}$ for some $\lambda$, then
$f^\lambda(z)\equiv 0$ and thus $S^\lambda_s=S^\lambda$ for all $s$. 

It only remains to verify that the surfaces
$(S^\lambda_s)_{\lambda\in\Lambda}$ form a foliation for each $s$, or
equivalently, that the map $F_s:B^2(\eps)\times\Lambda\to\R^4$,
$$
   F_s(z,\lambda) := \bigl(z,\lambda+f^\lambda_s(z)\bigr) =
   \bigl(z,\lambda+\phi_{1-s}(|z|)A^\lambda z\bigr),			
$$
is an embedding. For injectivity, suppose that
$F_s(z,\lambda)=F_s(z',\lambda')$. Then $z=z'$ and
$\lambda-\lambda'=-\phi_{1-s}(|z|)(A^\lambda-A^{\lambda'})z$. This implies 
$$
   |\lambda-\lambda'|\leq\|A^\lambda-A^{\lambda'}\|\;|z| \leq
   \eps\|A^\lambda-A^{\lambda'}\|.  
$$
Since $A^\lambda$ depends smoothly on $\lambda$, there exists a
constant $C$ such that $\|A^\lambda-A^{\lambda'}\|\leq
C|\lambda-\lambda'|$. For $\eps<1/C$ it follows that
$\lambda=\lambda'$. For the immersion property, consider the
differential 
$$
   DF_s(z,\lambda) = \left(\begin{matrix} \Id & 0 \\ 
   D_zf^\lambda_s & \Id+B_s
   \end{matrix}\right),\qquad B_s = \frac{\p f^\lambda_s}{\p\lambda}.  
$$ 
This is invertible iff the matrix 
$$
   \Id+B_s = \Id+\phi_{1-s}(|z|)\frac{\p
   A^\lambda}{\p\lambda}z 
$$
is invertible. By smoothness in $\lambda$, there exists a constant $C$
with $\|\frac{\p A^\lambda}{\p\lambda}z\|\leq C|z|$. Then for 
$\eps<1/C$ we get 
$$
   \|\phi_{1-s}(|z|)\frac{\p A^\lambda}{\p\lambda}z\| \leq C|z| \leq C\eps
   <1,  
$$
which implies invertibility of $\Id+B_s$.
\end{proof}

\begin{proof}[Proof of Proposition~\ref{prop:section-standard}]
We deform the given relative symplectic fibration
$\SS=(\FF_\std,\om,L_\std,S_0,S_\infty)$ in 4 steps. 

{\em Step 1. }By Proposition~\ref{prop:fibre-standard}, $\SS$ 
is homotopic to 
$\wt\SS=(\FF_\std,\wt\om,L_\std,\wt\Sigma,\wt\Sigma')$ such that 
$\wt\om=\om_\std$ on a neighbourhood of the fibre $F=\{z_0\}\times
S^2$. The sections $\wt\Sigma,\wt\Sigma'$ intersect the fibre in
points $(z_0,q)$ and $(z_0,q')$. 
After pulling back $\wt\SS$ by a symplectomorphism
$(z,w)\mapsto(z,g(w))$, where $g:S^2\to S^2$ is a Hamiltonian
diffeomorphism preserving the equator and mapping the south pole $S$
to $q$ and the north pole $N$ to $q'$, we may assume in addition that
$\wt\Sigma\cap F=(z_0,S)$ and $\wt\Sigma'\cap F=(z_0,N)$. 

{\em Step 2.} Consider the symplectic section $\wt\Sigma$. By
Lemma~\ref{lem:symp-standard} (with $\Lambda=\{0\}$, $S^0=\wt\Sigma$ and
$F=\{0\}\times\R^2$ in local coordinates), we can deform $\wt\Sigma$ such 
that it agrees with $S_0=S^2\times\{S\}$ near $\wt\Sigma\cap F$. 
Since the section $\wt\Sigma$ is isotopic to $S_0$, there exists  
a diffeomorphism of $S^2\times S^2$, isotopic to the identity
and fixed near $F$, mapping $S_0$ to $\wt\Sigma$. After pulling back everything
by this diffeomorphism, we may assume that $\wt\Sigma=S_0$.
As in the proof of Proposition~\ref{prop:fibre-standard}, using the
symplectic neighbourhood theorem, by pulling back by an isotopy of
$S^2\times S^2$ fixed near $F$ we can arrange in addition that
$\wt\om=\om_\std$ near $\wt\Sigma=S_0$ (but the foliation becomes
non-standard). The same arguments apply to the 
other section $\wt\Sigma'$. Altogether, we have shown that 
$\wt\SS$ is homotopic to a relative symplectic fibration of the form
$\wh\SS=(\wh\FF,\wh\om,L_\std,S_0,S_\infty)$ with the following properties:
$\wh\om=\om_\std$ and $\wh\FF=\FF_\std$ near the fibre $F=\{z_0\}\times S^2$,
and $\wh\om=\om_\std$ near the symplectic sections $S_0$ and $S_\infty$. 

{\em Step 3. }Next, we adjust the foliation $\wh\FF$ near $S_0\cup S_\infty$. 
Consider first $S_0$. 
Take a compact subset $\Lambda\subset S^2\setminus\{z_0\}$ such that
$\wh\FF=\FF_\std$ on a neighbourhood of $(S^2\setminus{\rm int}\Lambda)\times
S^2$. We identify $\Lambda$ with a subset of
$(\R^2,dx\wedge dy)$, and a neighbourhood of $\Lambda\times\{S\}$ in
$S^2\times S^2$ with a neighbourhood $W$ of $\{0\}\times\Lambda$ in
$(\R^4,\Om_0)$, by a symplectomorphism of the form
$(z,w)\mapsto(f(w),g(z))$. Under this identification, $\wh\FF$
corresponds to a symplectic
foliation of $W$ transverse to $\{0\}\times\Lambda$ and
standard near $\p\Lambda\times\R^2$. By 
Lemma~\ref{lem:symp-standard}, $\wh\FF$ can be deformed in $W$,
keeping it fixed near $\p\Lambda\times\R^2$, to a symplectic foliation
that is standard on a neighbourhood $U$ of $\{0\}\times\Lambda$ in
$\R^4$. Transfering back to $S^2\times S^2$ and performing the same
construction near $S_\infty$, we have thus deformed $\wh\SS$ to a
relative symplectic fibration $\ol\SS=(\ol\FF,\ol\om,L_\std,S_0,S_\infty)$
satisfying $\ol\om=\om_\std$ and $\ol\FF=\FF_\std$ near the set $F\cup
S_0\cup S_\infty$. This was the main step. It only remains to deform
$\ol\FF$ back to $\FF_\std$. 

{\em Step 4. }By construction, the foliation $\ol\FF$ is obtained in
steps 2 and 3 from $\FF_\std$ by a homotopy $\FF_t$ with $\FF_0=\FF$
and $\FF_1=\ol\FF$ which is fixed outside $V\setminus V_F$, for some
neighbourhoods $V$ of $S_0\cup S_\infty$ and $V_F$ of $F$. 
So we can write $\FF_t=\phi_t(\FF_\std)$ for diffeomorphisms with
$\phi_0=\id$ and $\phi_t=\id$ outside $V\setminus V_F$. Since $\ol\FF$ agrees with
$\FF_\std$ on a smaller neighbourhood $V_0\cup V_\infty$ of $S_0\cup S_\infty$, 
the diffeomorphisms $\phi_t$ can be chosen of the form
$(z,w)\mapsto(z,f_t(w))$ on $V_0$ and $(z,w)\mapsto(z,g_t(w))$ on
$V_\infty$ for diffeomorphisms $f_t,g_t$ of $S^2$. Now the homotopy
$\phi_t^{-1}(\ol\FF)$ connects $\ol\FF$ to the symplectic fibration
$\phi_1^{-1}\ol\SS=(\FF_\std,\phi_1^*\ol\om,L_\std,S_0,S_\infty)$,
where $\phi_1^*\ol\om$ is split near $F\cup S_0\cup S_\infty$. This
concludes the proof of Proposition~\ref{prop:section-standard}. 
\end{proof}

\begin{remark}\label{rem:section-standard}
Replacing Step 4 of the preceding proof by a more careful deformation 
of the foliation $\ol\FF$ (not by diffeomorphisms but keeping it
symplectic for $\ol\om$), we could arrange $\wt\om=\om_\std$ near
$F\cup S_0\cup S_\infty$ in Proposition~\ref{prop:section-standard}. 
As the class of split forms is better suited for the modifications in
the next section, we content ourselves with making $\wt\om$ split near
$F\cup S_0\cup S_\infty$.  
\end{remark}

\section{Killing the holonomy}\label{sec:killholonomy}

In this section we will deform a relative symplectic fibration to
kill all the holonomy and conclude the proof of the main theorem. A
crucial ingredient is the inflation procedure from~\cite{LalMcD}. 

\subsection{Setup}\label{ss:setup}

Recall that $\FF_\std$ is the foliation on $S^2\times S^2$ given
by the fibres of the projection $p_1$ onto the first factor,
$S_0=S^2\times \left\lbrace S\right\rbrace$ and $S_\infty=S^2\times
\left\lbrace N\right\rbrace$ are the standard sections, 
$F=p_1^{-1}(z_0)$ is the fibre over the point $z_0=(1,0,0)$, and the
Clifford torus $L_\std=E\times E$ is the product of the equators.
In the following, we identify $S_0$ with the base $S^2$ of the
projection $p_1$, i.e., we identify $p_1$ with the map
$(z,w)\mapsto(z,S)$ sending each fibre to its intersection with
$S_0$.

Our starting point is a relative symplectic fibration $\SS=(\FF_\std,
\omega, L_\std,S_0,S_\infty)$ as provided by Proposition~\ref{prop:section-standard}
such that $\om$ is split on a neighbourhood $W=(U_F\times S^2)\cup
(S^2\times U_0)\cup (S^2\times U_\infty)$ of $F\cup S_0\cup S_\infty$.   
In particular the sections $S_0,S_\infty$ are horizontal for the
symplectic connection. After pulling back $\SS$ by a diffeomorphism of
the form $(z,w)\mapsto (\phi(z),w)$ (keeping the same notation), we
may replace $U_F$ by the ball 
$$
   B:=\left\lbrace (x,y,z)\in S^2\mid x\geq -1/\sqrt{2}\right\rbrace,
$$
so that $\om$ now satisfies
\begin{equation}\label{eq:split2}
\begin{aligned}
   \omega &= p_1^*\sigma_0+p_2^*\sigma_\std\quad \text{on the set}\quad
   W_0=(B\times S^2) \cup (S^2\times U_0), \quad\text{and}\cr
   \omega &= p_1^*\sigma_\infty+p_2^*\sigma_\std \quad\text{on the set}\quad
   W_\infty=(B\times S^2)\cup(S^2\times U_\infty).
\end{aligned}
\end{equation}
Consider the usual spherical coodinates $(\lambda,\mu) \in
[-\frac{\pi}{2},\frac{\pi}{2}]\times [0,2\pi]$ on the base $S^2$
centered at $z_0$. Thus $\lambda$ denotes the latitude and $\mu$ the
meridian, and $z_0$ lies at $(\lambda,\mu)=(0,0)$; see Figure~\ref{f42}.

\begin{figure}[h!]
\centering
\psfrag{B}{$B$}
\psfrag{C}{$C^\lambda$}
\psfrag{z0}{$z_0$}

 \includegraphics[width=5cm]{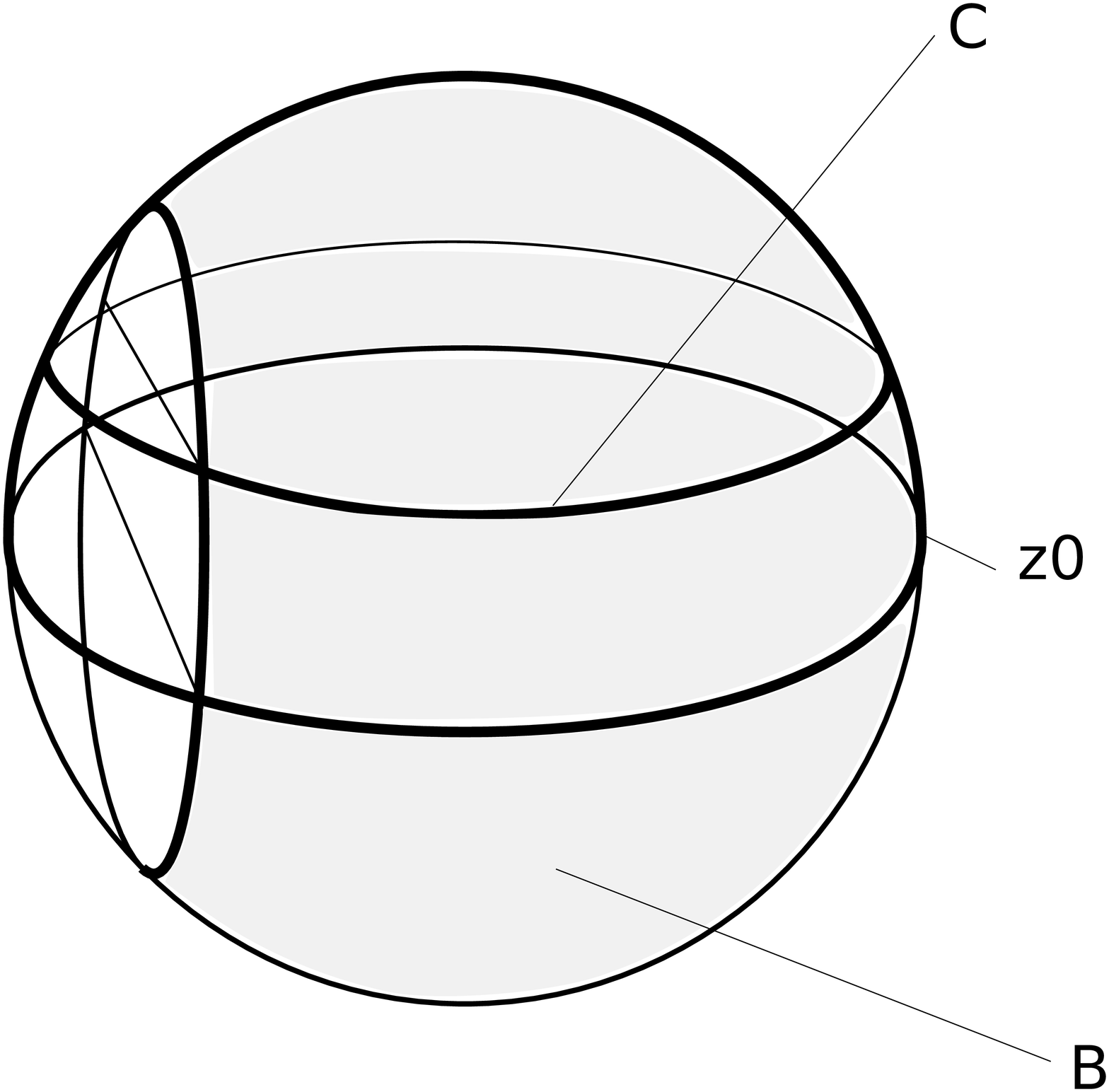}
 \caption{Circles of latitude and the set $B$}
 \label{f42}
\end{figure}

Denote by $C^\lambda$ the circle of latitude $\lambda$ in the base and
by $\phi^\lambda$ the symplectic parallel transport around $C^\lambda$
parametrised by $\mu\in[0,2\pi]$. Since the starting and end points of
the parametrisation of $C^\lambda$ are contained in
$B$ for all $\lambda$ and the symplectic form
$\omega$ equals $p_1^*\sigma_0+p_2^*\sigma_\std$ over
$B$, we can regard $\phi^\lambda$ as living in
$Symp(S^2,\ostd)$ for all $\lambda$. Moreover, the maps $\phi^\lambda$ have
the following two properties:
\begin{enumerate}
\item Since $C^\lambda\subset B$ for all $|\lambda|\geq \frac{\pi}{4}$
  and the form $\om$ is split on $B\times S^2$, we have $\phi^\lambda=\id$ for
  $|\lambda|\geq \frac{\pi}{4}$. 
\item Since $\om$ is split on $S^2\times(U_0\cup U_\infty)$, each
  $\phi^\lambda$ restricts to the identity on $U_0\cup U_\infty$.  
\end{enumerate}
Under stereographic projection $S^2\setminus\{N\}\to\C$ from the north
pole $N$, the standard symplectic form on $S^2$ corresponds to the
form $\ostd=\frac{r}{\pi(1+r^2)^2}dr\wedge d\theta$ in polar
coordinates on $\C$. We pick a closed annulus 
$$
   A=\left\lbrace z\in \mathbb{C}\;\bigl|\; a\leq |z|\leq
   b\right\rbrace\subset\C\cong S^2\setminus\{N\}
$$ 
with $a>0$ so small and $b>a$ so large that $\p A\subset U_0\cup
U_\infty$. According to properties (i) and (ii) above, parallel
transport along $C^\lambda$ then defines maps $\phi^\lambda \in
\Symp(A,\partial A,\ostd)$ (i.e., symplectomorphisms that equal the
identity near $\p A$, cf.~Appendix~\ref{app:diff}) that 
equal the identity for $|\lambda|\geq \frac{\pi}{4}$. In particular,
$[-\frac{\pi}{2},\frac{\pi}{2}]\ni\lambda \mapsto \phi^\lambda$
defines a loop in the identity component $Symp_0(A,\partial A,\ostd)$. 
Consider the loop of inverses 
$$
   \psi^\lambda=(\phi^\lambda)^{-1}.
$$ 
Since $L_\std=E\times E$ is invariant under parallel transport, the
map $\phi^0$, and thus $\psi^0$, preserves the equator $E$. 

\subsection{A special contraction}

According to Proposition~\ref{PHE}, the loop $\psi^\lambda$ is
contractible in $\Symp_0(A,\p A,\ostd)$. However, in order for the inflation
procedure below to work, we need a special contraction
$\psi^\lambda_s$ with the property that $\psi^0_s(E)=E$ for all
$s\in[0,1]$. Here we identify the equator $E$ in $S^2$ via
stereographic projection with the circle $E=\left\lbrace
|z|=1\right\rbrace \subset A$. 

\begin{prop}\label{Tspeccontr}
There exists a smooth contraction $\psi^\lambda_s\in\Symp_0(A,\p
A,\ostd)$ of the loop $\psi^\lambda$, with $(s,\lambda) \in [0,1]\times
[\frac{-\pi}{2},\frac{\pi}{2}]$, such that: 
\begin{enumerate}
\item $\psi^\lambda_0=\id$ and $\psi^\lambda_1=\psi^\lambda$ for all $\lambda$;
\item $\psi_s^\lambda=\id$ for $|\lambda|\geq \frac{\pi}{4}$ and all $s$;
\item $\psi^\lambda_s$ is constant in $s$ near $s=0$ and $s=1$;
\item $\psi^0_s(E)=E$ for all $s$.
\end{enumerate}
\end{prop}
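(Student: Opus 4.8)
The plan is to build the contraction in stages, using the simple connectivity of $\Symp_0(A,\p A,\ostd)$ (Proposition~\ref{PHE}) as a black box, and then to repair the family near $\lambda = 0$ so that condition (iv) holds. First I would apply Proposition~\ref{PHE} directly: since $\pi_1\bigl(\Symp_0(A,\p A,\ostd)\bigr) = 0$, the loop $\lambda \mapsto \psi^\lambda$ bounds a map $\widehat\psi : [0,1]\times[-\tfrac{\pi}{2},\tfrac{\pi}{2}] \to \Symp_0(A,\p A,\ostd)$ with $\widehat\psi^\lambda_0 = \id$ and $\widehat\psi^\lambda_1 = \psi^\lambda$. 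Because $\psi^\lambda = \id$ for $|\lambda|\ge\tfrac{\pi}{4}$, the restriction of the loop to $\{|\lambda|\ge\tfrac{\pi}{4}\}$ is already constant; by a relative version of simple connectivity (contracting rel the part of the boundary where the loop is constant, which is possible since $\{(s,\lambda): s=0 \text{ or } |\lambda|\ge\tfrac{\pi}{4}\}$ is contractible and we are filling a disk) we may take $\widehat\psi^\lambda_s = \id$ whenever $|\lambda|\ge\tfrac{\pi}{4}$, giving (ii). Reparametrising the $s$-variable by a smooth function $[0,1]\to[0,1]$ that is constant near the endpoints arranges (iii), and this reparametrisation does not disturb (i) or (ii).

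It remains to upgrade the family to satisfy (iv), the condition $\psi^0_s(E) = E$ for all $s$. Here the key observation, already noted in the excerpt, is that $\psi^0$ itself preserves $E$ (because $L_\std = E\times E$ is invariant under parallel transport, so $\phi^0(E) = E$). Thus the path $s\mapsto \widehat\psi^0_s$ is a path in $\Symp_0(A,\p A,\ostd)$ from $\id$ to a symplectomorphism preserving $E$, but the intermediate maps $\widehat\psi^0_s$ need not preserve $E$. The plan is to correct this: the curves $E_s := \widehat\psi^0_s(E)$ form a smooth family of embedded circles in $A$, isotopic to $E$, with $E_0 = E_1 = E$, and each $E_s$ divides $A$ into two subannuli whose $\ostd$-areas are independent of $s$ (since $\widehat\psi^0_s$ is symplectic and fixes $\p A$, it preserves the area of the component of $A\setminus E_s$ adjacent to each boundary circle). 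Hence each $E_s$ is a symplectically embedded circle of the same "position" as $E$, and by a Banyaga-type argument (Theorem~\ref{thm:Banyaga}, applied fibrewise in $s$, or more elementarily by the classification of circles in an annulus up to area-preserving isotopy) there is a smooth family $g_s\in\Symp_0(A,\p A,\ostd)$ with $g_0 = g_1 = \id$, constant in $s$ near $s=0,1$, such that $g_s(E_s) = E$, i.e. $g_s\circ\widehat\psi^0_s$ preserves $E$.

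To make this correction compatible with the full two-parameter family (and not destroy conditions (i)–(iii)), I would choose the correcting isotopy to depend on $\lambda$ as well: extend $g_s$ to a family $g_s^\lambda\in\Symp_0(A,\p A,\ostd)$ that equals $g_s$ for $\lambda$ in a small neighbourhood of $0$, equals $\id$ for $|\lambda|\ge\tfrac{\pi}{8}$ say, is $\id$ whenever $s=0$ or $s=1$, and is constant in $s$ near the $s$-endpoints — this is possible since all the constraints are imposed on a contractible portion of parameter space and $g_s$ already satisfies them at the relevant boundary. Then setting $\psi^\lambda_s := g_s^\lambda\circ\widehat\psi^\lambda_s$ gives the desired contraction: (i) holds because $g_s^\lambda = \id$ at $s=0,1$; (ii) holds because for $|\lambda|\ge\tfrac{\pi}{4}$ both factors are the identity; (iii) holds because both factors are constant in $s$ near $s=0,1$; and (iv) holds by construction at $\lambda = 0$. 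The main obstacle I anticipate is the bookkeeping in this last step — arranging the correcting family $g_s^\lambda$ to simultaneously respect all four conditions without reintroducing $\lambda$-dependence where it is forbidden — but since every constraint lives over a contractible parameter region and $\Symp_0(A,\p A,\ostd)$ is itself connected, this is a routine (if slightly delicate) interpolation rather than a genuine difficulty.
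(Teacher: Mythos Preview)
Your approach is valid but organized differently from the paper's, and the step where you produce the correcting loop $g_s$ is under-justified.

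The paper does not contract first and fix later. Instead it invokes Lemma~\ref{L1} at the outset: since $\psi^0$ preserves $E$, there is a path $\alpha$ in $\Symp_0(A,\p A,\ostd)$ from $\id$ to $\psi^0$ with $\alpha(t)(E)=E$ throughout. The loop $\psi^\lambda$ is then cut at $\lambda=0$ into two arcs $\delta_1,\delta_2$; each arc is closed up with $\bar\alpha$ (resp.\ $\alpha$) into a loop, and these two loops are filled separately using $\pi_1\Symp_0(A,\p A,\ostd)=0$. Gluing the two half-disks along $\alpha$ produces a filling disk whose diameter is the $E$-preserving path $\alpha$, so condition~(iv) holds by construction; a reparametrisation of the disk and smoothing then arrange (i)--(iii).

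Your ``contract arbitrarily, then correct near $\lambda=0$'' route also works, but the sentence ``by a Banyaga-type argument \dots there is a smooth family $g_s$ with $g_0=g_1=\id$'' hides exactly the same ingredient that the paper isolates in Lemma~\ref{L1}. Banyaga's theorem yields only a \emph{path} $g_s$ starting at $\id$ with $g_s(E_s)=E$; its endpoint $g_1$ lies in the subgroup of $\Symp_0(A,\p A,\ostd)$ preserving $E$ but is not a priori the identity. Closing the path into a loop based at $\id$ requires connecting $g_1$ to $\id$ \emph{within that subgroup}, and this is precisely the content of Lemma~\ref{L1}. (Equivalently: you are implicitly asserting that the space of embedded core-isotopic circles enclosing the prescribed area is simply connected; via the evident fibration with contractible total space $\Symp_0(A,\p A,\ostd)$, this reduces to the stabiliser of $E$ being connected --- again Lemma~\ref{L1}.) You locate the obstacle in the bookkeeping of extending $g_s$ to $g_s^\lambda$, but that extension is genuinely routine once $g_s$ is a loop; the substantive input is getting $g_1=\id$ in the first place. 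Once you cite Lemma~\ref{L1} there, the rest of your argument goes through.
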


\begin{proof}
Since the holonomy $\psi^0$ along the equator in the base preserves
the equator $E$ in the fibre, Lemma~\ref{L1} provides a path
$\alpha(t)\in\Symp_0(A,\partial A,\ostd)$ from the identity to
$\psi^0$ which preserves $E$ for all $t$. We split the loop
$\psi^\lambda$ into two paths $\delta_1:=\left\lbrace  \psi^\lambda
\right\rbrace_{\lambda \in [-\frac{\pi}{2},0]}$ and
$\delta_2:=\left\lbrace  \psi^\lambda \right\rbrace_{\lambda \in
  [0,\frac{\pi}{2}]}$. Using these, we define two loops $\gamma_1:=\delta_1\ast
\bar{\alpha}$ and $\gamma_2:=\alpha \ast \delta_2$, where $\ast$
means concatenation of paths and $\bar{\alpha}$ denotes the path
$\alpha$ traversed in the opposite direction; see Figure~\ref{f44}. 
By Proposition~\ref{PHE}, these loops are contractible in
$\Symp_0(A,\partial A,\ostd)$, so we can fill them by half-disks
$D_1,D_2$ in $\Symp_0(A,\partial A,\ostd)$. Gluing these half-disks 
along $\alpha$ yields a map $\vartheta: D\to \Symp_0(A,\partial
A,\ostd)$ from the unit disk $D\subset\C$ which restricts to the loop
$\psi^\lambda$ on $\p D$ (starting and ending at $-i$) and to the
path $\alpha$ on the imaginary axis. The composition of $\vartheta$
with the map 
$$
   \eta:[0,1]\times[-\frac{\pi}{2},\frac{\pi}{2}]\to D,\qquad 
   (s,\lambda)\mapsto (s-1)i+se^{i(2\lambda+\pi/2)}
$$ 
(see Figure~\ref{circlesoflatitude2}) then has properties
(i) and (iv) of the proposition. By smoothing and reparametrisation we
finally arrange properties (ii) and (iii) to obtain the desired contraction
$\psi^\lambda_s$. 
\begin{figure}[h!]
 \centering
\psfrag{alpha}{$\alpha$}
\psfrag{id}{$\id$}
\psfrag{p0}{$\psi^0$}
\psfrag{g1}{$\gamma_1$}
\psfrag{g2}{$\gamma_2$}
\psfrag{D1}{$D_1$}
\psfrag{d1}{$\delta_1$}
\psfrag{d2}{$\delta_2$}
\psfrag{D2}{$D_2$}
\psfrag{psi}{$\psi^\lambda$} 

 \includegraphics[width=11cm]{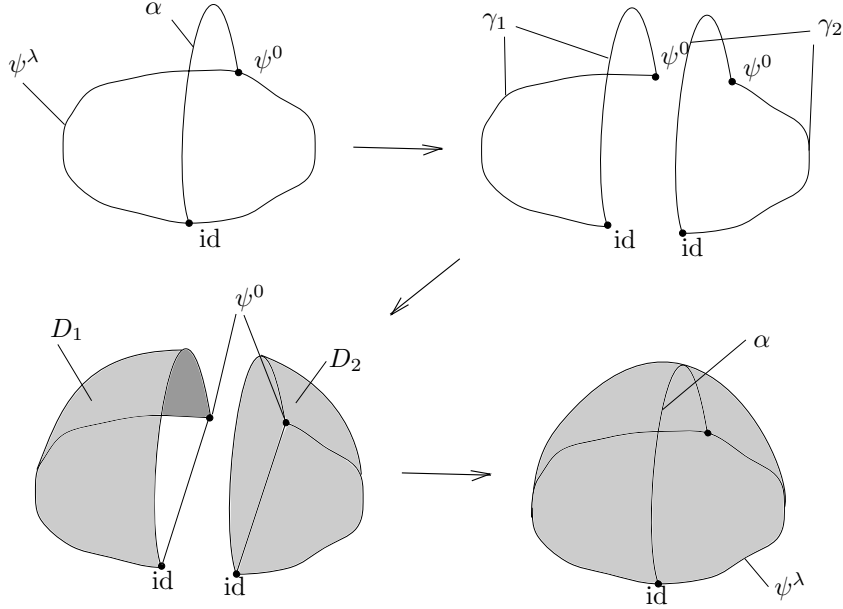}
 \caption{Construction of the special contraction $\psi^ \lambda_s$}
 \label{f44}
\end{figure}

\begin{figure}[h!]
 \centering
 \psfrag{B}{$\eta$}
 \psfrag{Bs}{$s$}
 \psfrag{Be}{$1$}
 \includegraphics[width=14cm,bb=0 -1 1777 793]{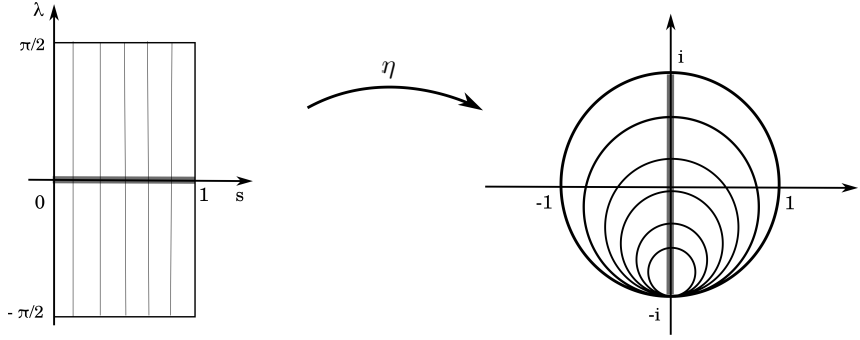}
\caption{The reparametrisation $\eta$}
 \label{circlesoflatitude2}
\end{figure}

\end{proof}

\subsection{A special Hamiltonian function}\label{ConstrH}

Next, we construct a family of time-dependent Hamiltonians generating
the contraction $\psi_s^\lambda$ of the previous subsection. 
We begin with a simple lemma. 

\begin{lemma}\label{LformulaHtFt}
Let $(M,\omega=d\lambda)$ be an exact symplectic manifold. Let
$\phi_t:M\to M$ be a symplectic isotopy starting at $\phi_0=\id$ 
generated by the time-dependent vector field $X_t$, i.e
$\frac{d}{dt}\phi_t=X_t\circ \phi_t$. 
Then $\iota_{X_t}\omega=dH_t$ for a smooth family of 
functions $H_t \colon M \to \mathbb{R}$ if and only if $\phi_t^\ast
\lambda-\lambda=dF_t$ for a smooth family of functions $F_t \colon M
\to \mathbb{R}$. Moreover, $F_t$ and $H_t$ are related by the equations 
$$
   F_t=\int_0^t(H_s+\iota_{X_s}\lambda)\circ \phi_s\,ds,\qquad 
   H_t=\dot F_t \circ \phi_t^{-1}-\iota_{X_t}\lambda.
$$
\end{lemma}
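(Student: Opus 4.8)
The plan is to verify the equivalence and the formulas by a direct computation using Cartan's formula. Write $\phi_t^*\lambda - \lambda = \int_0^t \dds (\phi_s^*\lambda)\, ds$, and compute the integrand. Since $\ddt\phi_t = X_t\circ\phi_t$, we have the standard identity $\ddt(\phi_t^*\lambda) = \phi_t^*(L_{X_t}\lambda)$, where $L_{X_t}$ is the Lie derivative. By Cartan's formula, $L_{X_t}\lambda = d\iota_{X_t}\lambda + \iota_{X_t}d\lambda = d\iota_{X_t}\lambda + \iota_{X_t}\omega$. Thus
$$
   \ddt(\phi_t^*\lambda) = \phi_t^*\bigl(d\iota_{X_t}\lambda + \iota_{X_t}\omega\bigr).
$$

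First I would treat the ``if'' direction of the first claim together with the formula for $F_t$: assuming $\iota_{X_t}\omega = dH_t$, substitute to get $\ddt(\phi_t^*\lambda) = \phi_t^*d(H_t + \iota_{X_t}\lambda) = d\bigl((H_t + \iota_{X_t}\lambda)\circ\phi_t\bigr)$, using that pullback commutes with $d$. Integrating from $0$ to $t$ and using $\phi_0 = \id$ gives $\phi_t^*\lambda - \lambda = d\int_0^t (H_s + \iota_{X_s}\lambda)\circ\phi_s\, ds$, which is exact with primitive exactly the stated $F_t$. Conversely, for the ``only if'' direction, assume $\phi_t^*\lambda - \lambda = dF_t$. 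Differentiating in $t$ gives $\phi_t^*(d\iota_{X_t}\lambda + \iota_{X_t}\omega) = d\dot F_t$, hence $d\iota_{X_t}\lambda + \iota_{X_t}\omega = d(\dot F_t\circ\phi_t^{-1})$ after pulling back by $\phi_t^{-1}$ (again using that $\phi_t^{-1*}$ commutes with $d$ and $\phi_t^{-1*}\phi_t^* = \id$). Therefore $\iota_{X_t}\omega = d(\dot F_t\circ\phi_t^{-1} - \iota_{X_t}\lambda) =: dH_t$, which is exact and yields precisely the stated formula for $H_t$ in terms of $F_t$.

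To finish, I would check the two displayed formulas are mutually consistent, i.e. that plugging the expression for $H_t$ into the integral for $F_t$ reproduces $F_t$ (and vice versa); this follows by reversing the integration/differentiation steps above, noting $\dot F_t\circ\phi_t^{-1}\circ\phi_t = \dot F_t$ so that $H_t + \iota_{X_t}\lambda = \dot F_t\circ\phi_t^{-1}$ composed with $\phi_t$ gives $\dot F_t$, and $\int_0^t \dot F_s\, ds = F_t$ since $F_0 = 0$ (which we may normalize, as $F_t$ is only determined up to a function of $t$ alone). I do not expect any real obstacle here: the only mild subtlety is keeping track of where one composes with $\phi_t$ versus $\phi_t^{-1}$, and the normalization $F_0 = 0$ used to pin down the additive constant; everything else is a routine application of Cartan's formula and the flow identity $\ddt(\phi_t^*\lambda) = \phi_t^*(L_{X_t}\lambda)$.
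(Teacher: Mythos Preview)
Your proof is correct and follows essentially the same route as the paper: both directions are handled by Cartan's formula together with the flow identity $\ddt(\phi_t^*\lambda)=\phi_t^*(L_{X_t}\lambda)$, integrating for one implication and differentiating for the other. The only addition beyond the paper is your final consistency check, which is harmless but not needed.
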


\begin{proof}
Assume first that $\iota_{X_t}\omega=dH_t$. Then 
\begin{gather*}
\phi_t^\ast \lambda-\lambda=\int_0^t\frac{d}{ds}(\phi_s^\ast
\lambda) ds=\int_0^t\phi_s^\ast (L_{X_s})\lambda ds \cr
=\int_0^t \phi_s^\ast (\iota_{X_s}d\lambda+d\iota_{X_s}\lambda)ds=d\int_0^t
(H_s+\iota_{X_s}\lambda)\circ \phi_s ds
\end{gather*}
shows that $\phi_t^\ast \lambda-\lambda=dF_t$ holds for $F_t:=\int_0^t
(H_s+\iota_{X_s}\lambda)\circ \phi_s ds$. Conversely, if $\phi_t^\ast
\lambda-\lambda=dF_t$, then we differentiate this equation to obtain 
$$
   d\dot F_t = \frac{d}{dt}(\phi_t^\ast \lambda) =
   \phi_t^\ast(d\iota_{X_t}\lambda+\iota_{X_t}d\lambda),
$$
which shows that $i_{X_t}d\lambda=dH_t$ holds for $H_t := \dot F_t \circ
\phi_t^{-1}-\iota_{X_s}\lambda$. 
\end{proof}

Now let $\psi^\lambda_s\in\Symp_0(A,\p A,\ostd)$ be the special
contraction from Proposition~\ref{Tspeccontr}.  
Let $\lambda_\std=\frac{-1}{2(1+r^2)\pi}d\theta$ be the standard
primitive of $\ostd$ (any other primitive would also do). 
Then for each $(s,\lambda)$ the $1$-form
$\alpha_s^\lambda:=(\psi^\lambda_s)^*\lambda_\std-\lambda_\std$ on $A$
is closed and vanishes near $\p A$. So by the relative Poincar\'e lemma, 
$$
   (\psi^\lambda_s)^*\lambda_\std-\lambda_\std = dF_s^\lambda
$$
for a unique smooth family of functions $F_s^\lambda$ that vanish near
the lower boundary component $\p_-A=\{a\}\times S^1$ of $A$. (We can
define $F_s^\lambda(w):=\int_{\gamma_w}\alpha_s^\lambda$ along any
path $\gamma_w$ from a base point on $\p_-A$ to $w$, which does not
depend on the path because every loop can be deformed into $\p_-A$
where $\alpha_s^\lambda$ vanishes.) Note that $F_s^\lambda$ will be
constant near the upper boundary component $\p_+A=\{b\}\times S^1$,
where the constant may depend on $s$ and $\lambda$.  

By Lemma~\ref{LformulaHtFt}, the family $F^\lambda_s$ is related to
a smooth family of Hamiltonians $\wt H^\lambda_s$ generating the isotopy  
$\psi^\lambda_s$ (for fixed $\lambda$) by the formula 
$$
   \wt H^\lambda_s = \frac{\p F^\lambda_s}{\p s}\circ
   (\psi^\lambda_s)^{-1}-\iota_{X^\lambda_s}\lambda_\std,
$$
where $\frac{d}{dt}\psi^\lambda_t=X^\lambda_t\circ
\psi^\lambda_t$. By construction, $\wt H_s^\lambda$ vanishes near the lower boundary
component $\p_-A$ of $A$ and it is constant near the upper boundary
component $\p_+A$ (where the constant may vary with $s$ and $\lambda$).
Further, since $\psi_s^\lambda$ is constant near its ends
in both $s$ and $\lambda$, we have $\wt H^\lambda_s=0$ for $|\lambda|\geq
\frac{\pi}{4}$ and for $s<2\epsilon$, $s>1-2\epsilon$ with some
$\epsilon>0$. 

Note that, since $\psi^0_s$ preserves the equator,
the Hamiltonian vector field $X^0_s$ is tangent to $E$ for all $s$. So
the restriction $\wt H^0_s|_E$ is constant for all $s$ and defines a
function $\wt H_E(s)$. For reasons that will become clear in the next
subsection, we wish to modify $\wt H$ to make this function vanish. For
this, we pick be a smooth cutoff function $\rho:\R\to[0,1]$ with 
$\rho(0)=1$ and support in $[-\frac{\pi}{4},\frac{\pi}{4}]$ and define
$$
   H_s^\lambda := \wt H_s^\lambda - \rho(\lambda)\wt H_E(s). 
$$ 
Since $H_s^\lambda$ differs from $\wt H_s^\lambda$ only by a function
of $s$ and $\lambda$, it still has the same Hamiltonian vector field
and thus still generates the family $\psi_s^\lambda$. By
construction, $H_s^\lambda$ depends only on $s$ and $\lambda$ near the
boundary $\p A$ (with possibly different functions at the two
boundary components), $H^\lambda_s=0$ for $|\lambda|\geq
\frac{\pi}{4}$ and for $s<2\epsilon$, $s>1-2\epsilon$, and 
$$
   H_s^0|_E=0\quad\text{for all }s. 
$$
We define in spherical coordinates on the base the squares
\begin{align*}
   Q &:= \left\lbrace (\mu,\lambda)\in S^2\setminus\left\lbrace
   N,S\right\rbrace \;\bigl|\; 2\epsilon \leq \mu \leq 1-2\epsilon,\ |\lambda|\leq
   \frac{\pi}{4}\right\rbrace,\cr  
   \wt{Q} &:= \left\lbrace (\mu,\lambda)\in S^2\setminus\left\lbrace
   N,S\right\rbrace \;\bigl|\; \epsilon \leq\mu \leq 1-\epsilon, \ |\lambda|\leq
   \frac{\pi}{3}\right\rbrace. 
\end{align*}
Note that $Q\subset\inn\wt Q$ and $\wt Q\subset\inn B$, where $B$ is
the region defined at the beginning of this section over which $\om$
is split. The family $H_s^\lambda$ constructed above gives rise to 
a smooth function 
$$
   H \colon \wt{Q}\times A \to \mathbb{R},\qquad (\lambda,\mu,w) \mapsto
   H^\lambda_\mu(w).
$$ 
Let us write the fibre sphere as 
$$
   S^2={\rm Cap}_{N}\cup A \cup {\rm Cap}_{S},
$$
where ${\rm Cap}_N$ and ${\rm Cap}_S$ denote the northern and southern
polar caps, respectively. Then we can extend $H$ first to
$\wt{Q}\times S^2$ by the corresponding functions of $(\lambda,\mu)$
on the southern and northern polar caps, and then to all of 
$S^2\times S^2$ by zero outside $\wt Q\times S^2$. We still denote the
resulting function by $H:S^2\times S^2\to\R$. By construction, $H$ has
support in $Q\times S^2$, it depends only on $(\lambda,\mu)$
outside $Q\times A$, and $H(0,\mu)|_E\equiv 0$ for all $\mu$, where
we denote $H(\lambda,\mu):=H|_{p_1^{-1}(\lambda,\mu)}$.

\subsection{A special symplectic connection}

Recall that we consider a relative symplectic fibration
$(\FF_\std,\omega,L_\std,S_0,S_\infty)$ such that the symplectic
form $\omega$ is split on the set $(B\times S^2)\cup(S^2\times(U_0\cup
U_\infty))$. Our current goal is to change the symplectic form
$\omega$, in its relative cohomology class in $H^2(S^2\times
S^2,L_\std; \mathbb{R})$, to a form $\om'$ which has trivial
holonomy around the circles of latitude. To explain the idea, consider
a circle of latitude $C^\lambda$ (cf.~Figure~\ref{f48}). 
As the symplectic form is split over $B$, its parallel transport
equals the identity along the part of $C^\lambda$ lying within $B$, so
the holonomy $\phi^\lambda$ is realised by 
travelling along the part of $C^\lambda$ outside $B$.
 
\begin{figure}[h!]
 \centering
 \psfrag{d}{$\phi^\lambda$ is realised here}
\psfrag{B}{$Q$}
\psfrag{C}{$Q\cap C^\lambda$; $\psi^\lambda$ shall be realised here}
\psfrag{q}{$Q$}
\psfrag{mu}{$\mu$}
\psfrag{z0}{$z_0$}
 \includegraphics[width=9cm]{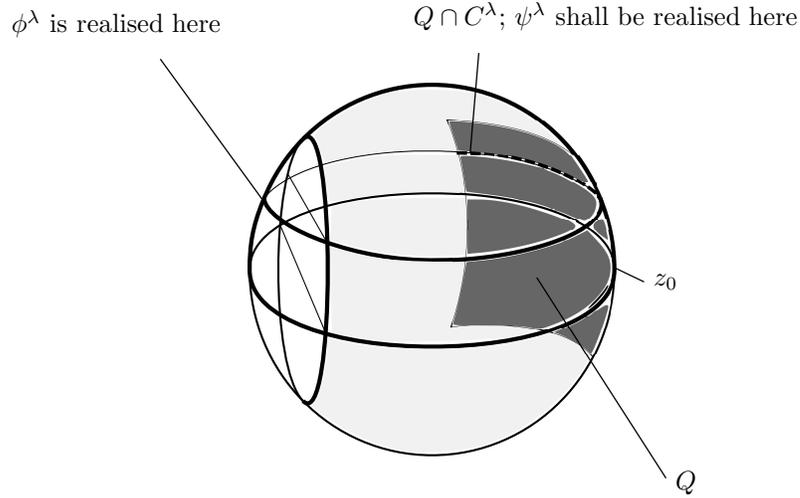}
 \caption{The path $Q\cap C^\lambda$ and $B\cap C^\lambda$}
 \label{f48}
\end{figure}

The idea is now to modify $\omega$ to $\omega'$ such that the
symplectic connection of $\omega'$ agrees with that of $\omega$
outside $Q\times S^2$ and realises the inverse holonomy $\psi^\lambda$
along $C^\lambda\cap Q$ for all $\lambda$. 

For the following computations, let us rename the coordinates
$(\lambda,\mu)$ to 
$$
   x:=\mu\in[0,1],\quad y:=\lambda\in[-\frac{\pi}{3},\frac{\pi}{3}]. 
$$ 
Recall that the function $H:S^2\times S^2\to\R$ constructed in the
previous subsection has support in $Q\times S^2$, where $Q=[2\epsilon,1-2\epsilon]\times
[-\frac{\pi}{4},\frac{\pi}{4}]$ in the new coordinates $(x,y)$. 
Consider the closed $2$-form 
$$
   \Omega_H=\omega+dH\wedge dx
$$ 
on $Q\times A$, extended by $\om$ to a form on all of $S^2\times 
S^2$. Since $\Omega_H$ is vertically nondegenerate, the 
$\Omega_H$-orthogonal complements to the tangent spaces of the fibres
of $p_1$ induce a symplectic connection on $S^2\times S^2$.

\begin{lemma}\label{lem:hol-latitude}
(a) The holonomy of $\Om_H$ along each circle of latitude $C^\lambda$ is
trivial.

(b) The closed form $\Om_H$ vanishes on $L_\std$ and is relatively
cohomologous to $\om$. 
\end{lemma}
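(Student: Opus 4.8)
The plan is to verify the two assertions by direct computation in the coordinates $(x,y,w)=(\mu,\lambda,w)$ on $Q\times A$, using the fact that $\Om_H$ agrees with $\om$ outside $Q\times A$.

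\textbf{Part (a).}
First I would compute the horizontal lift of the coordinate vector fields $\p_x,\p_y$ on the base with respect to $\Om_H=\om+dH\wedge dx$. Write $\om=\sigma_0+\tau$ near $Q\times A$ where $\tau$ is the fibrewise form and, crucially, recall that over $B\supset\wt Q$ the form $\om$ is split, so the $\om$-horizontal lift of $\p_x$ and $\p_y$ are just $\p_x$ and $\p_y$ themselves (the holonomy of $\om$ is trivial over $B$). Hence for $\Om_H$ the horizontal lift of $\p_y$ is still $\p_y$ (since $dH\wedge dx$ contributes nothing when we contract with $\p_y$ modulo vertical vectors -- more precisely, $\iota_{\p_y}(dH\wedge dx)=-(\p_y H)\,dx$, which is a basic $1$-form, so it does not affect the vertical component of the equation defining the horizontal lift). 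The horizontal lift of $\p_x$, however, picks up a correction: one checks that $\widetilde{\p_x}^{\Om_H}=\p_x+X^y_x$ where $X^y_x$ is the fibrewise Hamiltonian vector field of $H^y_x=H(\cdot;x,y)$ with respect to $\ostd$. This is exactly the content of Lemma~\ref{LformulaHtFt} (applied with $\lambda_\std$ as the primitive): the extra term $dH\wedge dx$ in the connection $2$-form produces, as parallel transport in the $x$-direction, precisely the Hamiltonian flow of $H^y_\cdot$, which by construction is the contraction $\psi^\lambda_\mu$ reparametrised. Therefore parallel transport of $\Om_H$ once around $C^\lambda=\{y=\lambda\}$, traversed in the $x$-direction, is the composition of: the $\om$-holonomy along the part of $C^\lambda$ outside $Q$ (namely $\phi^\lambda$, realised in the region $B\setminus Q$), and the time-$1$ Hamiltonian flow of $H^\lambda_\cdot$ over $Q$ (namely $\psi^\lambda=(\phi^\lambda)^{-1}$). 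These cancel, so the $\Om_H$-holonomy around every $C^\lambda$ is the identity. For $|\lambda|\ge\pi/4$ both $\phi^\lambda$ and $H^\lambda$ vanish, so there is nothing to check there.

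\textbf{Part (b).}
To see $\Om_H|_{L_\std}=0$: we have $\Om_H=\om+dH\wedge dx$ and $\om|_{L_\std}=0$ since $L_\std$ is Lagrangian, so it remains to show $(dH\wedge dx)|_{L_\std}=0$. On $L_\std=E\times E$ the base coordinate is the equator $\{y=\lambda=0\}$, parametrised by $x=\mu$, and the fibre coordinate runs along the equator $E=\{|w|=1\}\subset A$. Restricted to $L_\std$, the form $dx$ is just the $1$-form $d\mu$ along the base equator, so $(dH\wedge dx)|_{L_\std}=(dH|_{L_\std})\wedge d\mu$; this is zero precisely when $dH|_{L_\std}$ is a multiple of $d\mu$, i.e.\ when $H|_{L_\std}$ is independent of the fibre coordinate. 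But $H$ restricted to $L_\std$ equals $\mu\mapsto H(0,\mu)|_E$, and by the construction in Section~\ref{ConstrH} we arranged exactly that $H(0,\mu)|_E\equiv 0$ for all $\mu$ (this was the purpose of subtracting $\rho(\lambda)\wt H_E(s)$). Hence $H|_{L_\std}\equiv 0$, so $dH|_{L_\std}=0$ and a fortiori $(dH\wedge dx)|_{L_\std}=0$. Finally, $\Om_H-\om=dH\wedge dx=d(H\,dx)$ is exact, so $\Om_H$ and $\om$ represent the same class in $H^2(S^2\times S^2;\R)$; since moreover $H\,dx$ vanishes on $L_\std$ (as $H|_{L_\std}=0$), the primitive $H\,dx$ restricts to $L_\std$ as the zero $1$-form, so $\Om_H-\om$ is exact \emph{relative to $L_\std$} and the two forms are equal in $H^2(S^2\times S^2,L_\std;\R)$.

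\textbf{Main obstacle.}
The delicate point is part (a): one must be careful that the reparametrisation built into $H$ (the map $\eta$ of Proposition~\ref{Tspeccontr} and the role of $x=\mu$ versus the flow-time parameter $s$) indeed makes the time-$1$ flow of $H^\lambda_\cdot$ over $\mu\in[2\epsilon,1-2\epsilon]$ equal to $\psi^\lambda$, and that the pieces where $H$ is $\mu$-dependent but fibre-constant (near $\p A$ and on the polar caps) contribute trivial holonomy -- which they do because a Hamiltonian depending only on $(\lambda,\mu)$ has vanishing fibrewise Hamiltonian vector field. The bookkeeping of how $\om$-parallel transport over $B\setminus Q$ composes with the $H$-generated flow over $Q$ to give the identity is where all the earlier normalisations (split form over $B$, the specific contraction, the normalisation $H(0,\mu)|_E\equiv0$) are used, so that is the step to write out with care.
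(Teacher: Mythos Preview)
Your argument for part (a) is essentially the paper's: compute the horizontal lifts $\wt\p_x=\p_x+X_{H^y_x}$ and $\wt\p_y=\p_y$ inside $Q\times A$, observe that outside $Q\times A$ the term $dH\wedge dx$ is pulled back from the base and hence does not change the connection, and conclude that the $\Om_H$-transport around $C^\lambda$ is $\psi^\lambda$ composed with $\phi^\lambda$. One small slip: you write that $\phi^\lambda$ is ``realised in the region $B\setminus Q$'', but since $\om$ is split over all of $B$ the nontrivial $\om$-transport occurs over $C^\lambda\setminus B$; what matters (and what you use) is only that the $\om$-transport over $C^\lambda\setminus Q$ equals $\phi^\lambda$, which holds because the $\om$-transport over $C^\lambda\cap Q$ is the identity. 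Also, the reference to Lemma~\ref{LformulaHtFt} is not really to the point here; the computation of the horizontal lift is a direct two-line calculation with $\Om_H$.

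Your part (b) is correct and takes a cleaner route than the paper. The paper proves $\Om_H|_{L_\std}=0$ by showing the horizontal lift $\wt\p_x$ is tangent to $L_\std$ and invoking $\Om_H(\wt\p_x,v)=0$; you instead use directly that $H|_{L_\std}\equiv 0$ (the normalisation $H(0,\mu)|_E\equiv 0$), so the pullback of $dH\wedge dx$ to $L_\std$ vanishes. For the relative cohomology, the paper integrates $\Om_H$ over the four generators of $H_2(S^2\times S^2,L_\std)$ and compares with $\om$; your observation that $\Om_H-\om=d(H\,dx)$ with the primitive $H\,dx$ vanishing on $L_\std$ gives relative exactness in one line. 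Both approaches rely on the same key input, the normalisation $H^0_\mu|_E\equiv 0$, but yours avoids the explicit basis and the integral computations entirely.
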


\begin{proof}
(a) Recall that $H$ depends only on $x$ and $y$ outside the set
$Q\times A$, so $\Om_H$ and $\om$ differ there by the pullback
$dH\wedge dx=p_1^*\alpha$ of a $2$-form $\alpha$ from the base. Since
adding the pullback of a 
$2$-form from the base does not change the symplectic connection
(because $\iota_v(p_1^*\alpha)=0$ for every vertical vector $v$), the
induced connections of $\Om_H$ and $\omega$ agree outside the set
$Q\times A$. Within $Q\times A$ the form
$\omega=p_1^*\sigma_0+p_2^*\ostd$ is split, so that its induced
connection is flat. The horizontal spaces of the induced connection of
$\Omega_H$ are spanned by the horizontal lifts of the coordinate
vector fields $\p_x,\p_y$. These can be easily seen to be 
$$
   \wt\p_x = \p_x+X_{H_x^y},\qquad \wt\p_y = \p_y,
$$
where $X_{H_x^y}$ is the Hamiltonian vector field of the Hamiltonian
function $H_x^y(w)=H(x,y,w)$ on the annulus $(A,\ostd)$. 
To see this, let us write $\tilde\p_x=\p_x+v_x$ with a
vertical vector $v_x$. This is horizontal iff 
$$
   0 = \Omega_H(\wt\p_x,v) = \Omega_H(\p_x,v) + \Omega_H(v_x,v) =
   -dH(v) + \ostd(v_x,v) 
$$ 
for all vertical vectors $v$, which just means that $v_x$ is the
Hamiltonian vector field of $H_x^y$ with respect to $\ostd$. A
similar calculation shows that $v_y=0$. 

It follows that the parallel transport of $\Om_H$ along an interval of
latitude $C^\lambda\cap Q\cong [2\epsilon,1-2\epsilon]\times\{y\}$ is the time-$1$
map of the Hamiltonian flow of the time-dependent Hamiltonian
$H_s^\lambda$. By construction of $H_s^\lambda$, this is the inverse
$\psi^\lambda$ of the holonomy of $\om$, and thus of $\Om_H$, along
the interval $C^\lambda\setminus Q$. Hence the total holonomy of
$\Om_H$ along each circle of latitude $C^\lambda$ is trivial. 

(b) By construction, the horizontal vector field
$\wt\p_x=\p_x+X_{H_x^y}$ is tangent to $L_\std$. Let $v$ be the
vertical vector field along $L_\std$ given by the positively oriented
unit tangent vectors to the equators in the fibres. Since $\Om_H(\wt
\p_x,v)=0$ by definition of horizontality, this shows that $L_\std$ is 
Lagrangian for $\Om_H$. 
Finally, let us compute the relative homology class of $\Om_H$ in
$H^2(S^2\times S^2,L_\std)$. For this, we evaluate $\Om_H$ on the
generators of $H_2(S^2\times S^2,L_\std)$ in Lemma~\ref{lem:homology}: 
\begin{align*}
   \int_{S^2\times\pt}\Om_H &= \int_{\pt\times S^2}\Om_H = 1, \cr
   \int_{\pt\times D_\lh}\Om_H &= \int_{\pt\times D_\lh}\om = \frac{1}{2}, \cr
   \int_{D_\lh\times\pt}\Om_H &= \frac{1}{2} + \int_{(Q\cap\{y\leq
       0\})\times\{e\}}dH\wedge dx = \frac{1}{2} +
   \int_0^1H(x,0,e)dx = \frac{1}{2}.
\end{align*}
Here in the last equation $e\in E$ is a base point on the equator in
the fibre and we have used the normalisation condition $H(x,0,e) =
H(x,0)|_E\equiv 0$ from the previous subsection. Since $\om$ takes the
same values on these classes by monotonicity of $L_\std$, this shows
that the relative cohomology classes of $\Om_H$ and $\om$ agree. 
\end{proof}

%

Let us analyse when the form $\Omega_H$ is symplectic. Since it is
closed, this is equivalent to the form $\Omega_H\wedge \Omega_H$ being
a volume form on $S^2\times S^2$. This is clearly satisfied outside
the set $Q\times S^2$ because there $H\equiv 0$. On the set $Q\times
S^2$, the form $\om$ is split of the form 
$\omega=p_1^*\sigma_0+p_2^\ast \ostd$. We work in the chosen
coordinates $x,y$ and write the form on the base as
$$
   \sigma_0=f(x,y)dx\wedge dy
$$ 
with a positive function $f$. A short computation yields
$$
   \Omega_H\wedge \Omega_H=\left( 1-\frac{1}{f}\dd{H}{y}\right)
   \omega\wedge \omega.
$$ 
So $\Omega_H$ will be symplectic iff 
\begin{equation*}\label{eq:symp}
   1-\frac{1}{f}\dd{H}{y}>0
\end{equation*}
everywhere. A priori, this need not be true for the given function
$H$, but it can be remedied by the inflation procedure in the next
subsection.

\subsection{Inflation}

In this subsection we recall the inflation procedure of McDuff and
Lalonde~\cite{LalMcD}, suitably adapted to our situation. 
Let $f_\sigma, \bar{f}_\tau$ be two smooth nonnegative bump functions
on $S^2$, where we think of $f_\sigma$ as living on the fibre sphere  
and of $\bar{f}_\tau$ as living on the base sphere; see
Figure~\ref{fig:inf}. We require that 
$$
   {\rm supp}(f_\sigma)\subset (U_0\cup U_\infty)\setminus A = {\rm
     Cap}_S\amalg {\rm Cap}_N, 
$$
where $U_0,U_\infty$ are the neighbourhoods of $S,N$ over which $\om$ is
split and $A$ is the annulus from the previous subsection, and that
$$
   \int_{{\rm Cap}_S}f_\sigma \ostd=\int_{{\rm Cap}_N}f_\sigma\ostd=\frac{1}{2}.
$$ 
In particular, $\int_{S^2}f_\sigma\ostd=1$. The function $\bar{f}_\tau$
is required to have support in $\wt{Q}$ and satisfy 
$$
   \bar{f}_\tau(x,y)=\bar{f}_\tau(x,-y)
$$ as well as $\bar{f}_\tau|_Q\equiv 1$. We define
$$
   f_\tau:=\frac{\bar{f}_\tau}{af}\quad\text{with}\quad
   a:=\int_{\wt{Q}}\frac{\bar{f}_\tau}{f}\sigma_0 =
   \int_{\wt{Q}}\bar{f}_\tau dx\wedge dy,
$$
where $\sigma_0 = f(x,y)dx\wedge dy$ as above. Then 
$\int_{\wt{Q}}f_\tau\sigma_0=\frac{1}{a}\int_{\wt{Q}}\bar{f}_\tau
dx\wedge dy=1$, which by the symmetry of $\bar{f}_\tau$ implies
\begin{equation*}
   \int_{\wt{Q}\cap \left\lbrace y\geq
     0\right\rbrace}f_\tau\sigma_0 =
   \frac{1}{a}\int_{\wt{Q}\cap\left\lbrace y\geq 0\right\rbrace
    }\bar{f}_\tau dx\wedge dy=\frac{1}{2}.  
\end{equation*}

 \begin{figure}[h!]
 \centering\psfrag{DN}{${\rm Cap}_N$}
\psfrag{DS}{${\rm Cap}_S$}
\psfrag{Dz0}{$\wt{Q}$}
\psfrag{z0}{$z_0$}
 \psfrag{N}{$N$}
 \psfrag{S}{$S$}
 \psfrag{Base}{On the base}
 \psfrag{Bases}{On the fibre}
 \psfrag{fsigma}{$f_\sigma$}
 \psfrag{fsigmat}{$\bar{f}_\tau$}
 \psfrag{sym}{$symmetric$}
 
 \includegraphics[width=11cm]{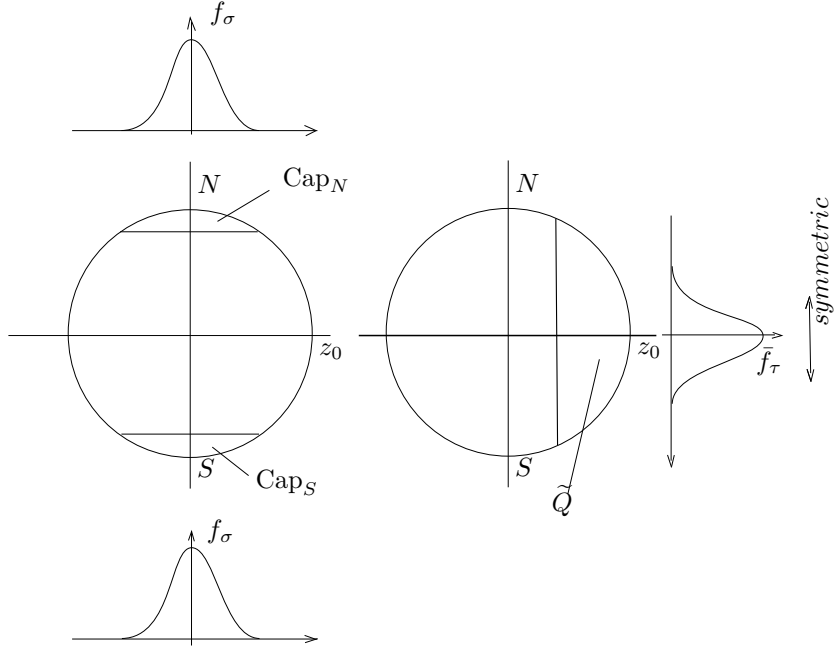}
 \caption{The functions $f_\sigma$ and $\bar{f}_\tau$}
 \label{fig:inf}
\end{figure}

We define the two non-negative $2$-forms 
$$
   \sigma:=f_\sigma \ostd,\qquad \tau:=f_\tau \sigma_0
$$ 
on $S^2$ and consider the family of $2$-forms on $S^2\times S^2$
\begin{equation}\label{eq:omc}
   \omega_c := \frac{1}{c+1}\left( \omega+cp_1^\ast \tau +cp_2^\ast
   \sigma \right),\qquad c\geq0.
\end{equation}

\begin{lemma}\label{lem:omc}
For each $c\geq 0$ the form $\om_c$ has the following properties:

(a) $\om_c$ is symplectic and $L_\std$ is Lagrangian for $\om_c$;

(b) $\om_c$ is cohomologous to $\om_0=\om$ in $H^2(S^2\times
S^2,L_\std;\R)$; 

(c) $\om_c$ induces the same symplectic connection as $\om$.
\end{lemma}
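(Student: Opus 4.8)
The plan is to verify (a), (b), (c) by direct computation, in the order (c), then the Lagrangian and symplectic assertions of (a), and finally (b), which reuses that $\om_c$ vanishes on $L_\std$. Two facts do all the work. First, $\om$ is split on $B\times S^2$, and $\mathrm{supp}(p_1^\ast\tau)\subset\wt Q\times S^2\subset B\times S^2$ since $\bar f_\tau$ is supported in $\wt Q$ and $\wt Q\subset\inn B$; there $\om=p_1^\ast\sigma_0+p_2^\ast\ostd$. Second, $\om$ is split on $S^2\times(U_0\cup U_\infty)$, and $\mathrm{supp}(p_2^\ast\sigma)\subset S^2\times\bigl((U_0\cup U_\infty)\setminus A\bigr)$; there $\om=p_1^\ast\sigma_0+p_2^\ast\ostd$ near $S_0$ and $\om=p_1^\ast\sigma_\infty+p_2^\ast\ostd$ near $S_\infty$ (see~\eqref{eq:split2}). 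In either splitting the $\om$-horizontal distribution is $TS^2\oplus0$, tangent to the base.

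For (c): $\om_c$ is $\frac{1}{c+1}\om$ plus the pullbacks $\frac{c}{c+1}p_1^\ast\tau$ and $\frac{c}{c+1}p_2^\ast\sigma$. Adding a pullback from the base never changes the $\om$-orthogonal complement of $\ker dp_1$, since $\iota_v p_1^\ast\tau=0$ for vertical $v$; and $p_2^\ast\sigma$ is supported where $\om$ is split, so the $\om$-horizontal spaces there are tangent to the base and $(p_2^\ast\sigma)(v,h)=\sigma(dp_2(v),dp_2(h))=0$ whenever $v$ is vertical and $h$ is $\om$-horizontal. Rescaling by $1/(c+1)$ is irrelevant, so the $\om_c$- and $\om$-horizontal distributions coincide and the symplectic connections agree. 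For the Lagrangian assertion in (a): $\om_c$ is closed as a sum of closed forms, and on $L_\std=E\times E$ one has $\om|_{L_\std}=0$ while $p_1^\ast\tau$ and $p_2^\ast\sigma$ restrict to zero because $dp_1$ and $dp_2$ map $TL_\std$ onto the one-dimensional tangent line of an equator, on which a $2$-form vanishes. Hence $\om_c|_{L_\std}=0$ and $L_\std$, being middle-dimensional and isotropic, is Lagrangian.

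The substantive point of (a) is $\om_c\wedge\om_c>0$. Expanding $(c+1)^2\,\om_c\wedge\om_c=(\om+cp_1^\ast\tau+cp_2^\ast\sigma)\wedge(\om+cp_1^\ast\tau+cp_2^\ast\sigma)$, the pure squares $p_1^\ast\tau\wedge p_1^\ast\tau$ and $p_2^\ast\sigma\wedge p_2^\ast\sigma$ vanish (a $2$-form on a surface has zero square). I would check that each remaining cross term is pointwise nonnegative: on $\mathrm{supp}(\tau)$, using $\om=p_1^\ast\sigma_0+p_2^\ast\ostd$ and $p_1^\ast(\sigma_0\wedge\tau)=0$, one gets $\om\wedge p_1^\ast\tau=p_1^\ast\tau\wedge p_2^\ast\ostd\ge0$; on $\mathrm{supp}(\sigma)$, similarly $\om\wedge p_2^\ast\sigma=p_1^\ast\sigma_0\wedge p_2^\ast\sigma\ge0$ (with $\sigma_\infty$ in place of $\sigma_0$ near $S_\infty$); and $p_1^\ast\tau\wedge p_2^\ast\sigma\ge0$ since $\tau,\sigma\ge0$. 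Therefore $\om_c\wedge\om_c\ge\frac{1}{(c+1)^2}\,\om\wedge\om>0$, so $\om_c$ is symplectic and (a) is complete.

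For (b): since both $\om_c$ and $\om$ vanish on $L_\std$ they define classes in $H^2(S^2\times S^2,L_\std;\R)$, which by Lemma~\ref{lem:homology} pairs perfectly with the group generated by $[S^2\times\pt]$, $[\pt\times S^2]$, $[D_\lh\times\pt]$, $[\pt\times D_\lh]$. I would evaluate $\om_c$ on these four classes using $\int_{S^2}\tau=\int_{S^2}\sigma=1$ and $\int_{D_\lh}\tau=\int_{D_\lh}\sigma=\tfrac12$ (the former by the symmetry $\bar f_\tau(x,y)=\bar f_\tau(x,-y)$, the latter since $\int_{\mathrm{Cap}_S}f_\sigma\ostd=\tfrac12$), together with the vanishing of $p_1^\ast\tau$ on classes $\pt\times(\cdot)$ and of $p_2^\ast\sigma$ on classes $(\cdot)\times\pt$. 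This yields the values $1,1,\tfrac12,\tfrac12$ for $\om_c$, matching those of $\om$ (which are $1,1$ from $\om(A)=\om(B)=1$ and $\tfrac12,\tfrac12$ from monotonicity of $L_\std$ on the disk classes), so $\om_c$ and $\om$ are relatively cohomologous. I expect no genuine obstacle: the only care needed anywhere is bookkeeping the regions over which $\om$ is split — so that the cross terms in $\om_c\wedge\om_c$ are manifestly nonnegative — and reading off the normalisations of $f_\tau$ and $f_\sigma$ from their construction.
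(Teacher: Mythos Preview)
Your proof is correct and follows essentially the same strategy as the paper's: direct verification of (a), (b), (c) using the split form of $\om$ on the supports of $\tau$ and $\sigma$, together with evaluation on the four generators of $H_2(S^2\times S^2,L_\std)$. The only difference is organizational---you expand $(c+1)^2\,\om_c\wedge\om_c$ globally and bound each cross term, whereas the paper computes $\om_c\wedge\om_c$ separately on the regions $W_0$, $W_\infty$, and their complement; similarly for (c) your term-by-term argument (pullbacks from the base never change the connection, and $p_2^\ast\sigma$ is supported where the horizontal distribution is already tangent to the base) replaces the paper's region-by-region one. Both amount to the same computation.
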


\begin{proof}
(a) First note that $\omega_c$ is closed for all $c\geq 0$ and 
$$
   \omega_c=\frac{1}{c+1}\omega\quad\text{outside }W:=W_0\cup W_\infty,
$$
where $W_0,W_\infty$ are the sets from~\eqref{eq:split2} on which $\om$
is split. On the set $W_0$, 
\begin{equation}\label{eq:omc1}
   \omega_c=\frac{1}{c+1}\Bigl((1+cp_1^\ast f_\tau)p_1^\ast
   \sigma_0+(1+cp_2^\ast f_\sigma)p_2^\ast \ostd\Bigr),
\end{equation}
and therefore
\begin{equation}\label{eq:omc2}
   \omega_c\wedge \omega_c=\frac{1}{(c+1)^2}(1+c p_1^\ast f_\tau)(1+c
   p_2^\ast f_\sigma)\,\omega \wedge \omega>0
\end{equation}
because $c, f_\tau, f_\sigma$ are nonnegative.
For the set $W_\infty$, we write $\sigma_\infty=g\sigma_0$
for a positive function $g$. Then on $W_\infty$ we have
$$
   \omega_c=\frac{1}{c+1}\Bigl((p_1^\ast g+cp_1^\ast f_\tau)p_1^\ast
   \sigma_0+(1+cp_2^\ast f_\sigma)p_2^\ast \ostd\Bigr),
$$ 
and again positivity of $g$ and nonnegativity of $c,f_\tau,f_\sigma$
implies 
$$
   \omega_c\wedge \omega_c=\frac{1}{(c+1)^2}(p_1^\ast g+c p_1^\ast
   f_\tau)(1+c p_2^\ast f_\sigma)\,\omega \wedge \omega>0.
$$
This proves that $\om_c$ is symplectic. The torus $L_\std$ is
Lagrangian for $\om_c$ because all pullback forms from the base or
the fibre vanish on $L_\std$. 

(b) To show that $\omega_c$ is relatively cohomologous to $\omega$,
we evaluate it on the basis of $H_2(S^2\times S^2,L_\std)$ from
Lemma~\ref{lem:homology}. Using $\int_{S^2}\sigma=\int_{S^2}\tau=1$, 
$\int_{D_\lh}\sigma=\int_{{\rm Cap}_S}\sigma=1/2$ and
$\int_{D_\lh}\tau=\int_{\wt Q\cap\{y\leq 0\}}\tau=1/2$, we compute
with the point $z_0\in E$ on the equator in the base or fibre:
\begin{align*}
   \int_{\pt\times S^2}\om_c &= \frac{1}{c+1} \int_{\pt\times S^2}
   (\omega+cp_2^\ast \sigma) = \frac{1}{c+1}(1+c)=1, \cr 
   \int_{S^2\times\pt}\om_c &= \frac{1}{c+1}\int_{S^2\times\pt}
   (\omega+c p_1^\ast \tau) = \frac{1}{c+1}(1+c)=1, \cr 
   \int_{\pt\times D_\lh}\om_c &= \frac{1}{c+1}\int_{\{z_0\}\times
     D_\lh} (\omega+cp_2^\ast
   \sigma)=\frac{1}{c+1}\Bigl(\frac{1}{2}+\frac{c}{2}\Bigr)=\frac{1}{2}, \cr 
   \int_{D_\lh\times\pt}\om_c &= \frac{1}{c+1}\int_{D_\lh\times\{z_0\}} 
   (\omega+c p_1^\ast \tau)=\frac{1}{c+1}\Bigl(\frac{1}{2}+\frac{c}{2}\Bigr) =
   \frac{1}{2}.
\end{align*}
By monotonicity of $L_\std$, the form $\om$ takes the same values on
these classes, so $[\om_c]=[\om]\in H^2(S^2\times S^2,L_\std;\R)$. 

(c) On the set $W=W_0\cup W_\infty$ the forms $\omega_c$ and $\om$ are
both split, hence both symplectic connections are flat and the
horizontal subspaces are the tangent spaces to the other cartesian 
factor. Outside $W$ we have $\omega_c=\frac{1}{c+1}\omega$ and, since
the symplectic complements to the fibres are not affected by scaling
of the symplectic form, the symplectic connections of
$\omega_c$ and $\om$ agree here as well.
\end{proof}


{\bf A new symplectic connection. }
Now recall that the function $H$ from the previous section is a
pullback from the base outside the set $S^2\times A$. On the set
$S^2\times A$, the function $p_2^\ast f_\sigma$ vanishes and thus
$\omega_c=\frac{1}{c+1}\Bigl((1+p_1^*f_\tau)p_1^\ast
\sigma_0+p_2^\ast \ostd\Bigr)$.  
In particular, on this set the restriction of $\omega_c$ to the fibres
it is just the standard form $\ostd$ scaled by $\frac{1}{c+1}$. Now
the fibrewise Hamiltonian vector field of the rescaled function
$\frac{1}{c+1}H$ with respect to $\frac{1}{c+1}\ostd$ equals the
fibrewise Hamiltonian vector field $X_{H_x^y}$ of $H$ with respect
to $\ostd$. So the horizontal lift of $\p_x$ with respect to the
closed $2$-form
\begin{equation}\label{eq:Omc}
   \Om_H^c:=\om_c+\frac{1}{c+1}dH\wedge dx
\end{equation}
agrees with its horizontal lift $\p_x+X_{H_x^y}$ with respect to
$\Om_H$ (see the proof of Lemma~\ref{lem:hol-latitude}), and since the
horizontal lift of $\p_y$ is $\p_y$ in both cases, we see that
$\Om_H^c$ and $\Om_H$ define the same symplectic connection for all
$c\geq 0$. Moreover, the proof of Lemma~\ref{lem:hol-latitude}(b) shows
that $\Om_H^c$ vanishes on $L_\std$ and is relatively cohomologous to
$\om_c$, and thus to $\om$ by Lemma~\ref{lem:omc}. 

{\bf Symplecticity. }
Again, let us analyse when the form $\Om_H^c$ is symplectic. 
Outside $Q\times S^2$, the form $\Omega^c_H$ is just $\omega_c$, which
is symplectic by Lemma~\ref{lem:omc}. On the set $Q\times S^2 \subset
W_0$, using equations~\eqref{eq:omc1} and~\eqref{eq:omc2} we compute
\begin{align*}
   \omega_c \wedge \frac{1}{c+1}dH\wedge dx
   &= \frac{1}{(c+1)^2}\left(-\dd{H}{y}\right)\left(1+c
   p_2^\ast f_\sigma \right)dx\wedge dy\wedge p_2^*\ostd \cr
   &= \frac{1}{2f(c+1)^2}\left(-\dd{H}{y}\right)\left(1+c
   p_2^\ast f_\sigma \right)\omega\wedge \omega, \cr
  \Omega^c_H\wedge \Omega^c_H 
   &= \omega_c\wedge \omega_c+2\omega_c \wedge \frac{1}{c+1}dH\wedge
   dx \cr
   &= \frac{1}{(c+1)^2}\left(1+cp_1^\ast f_\tau - \frac{1}{f}\dd{H}{y}
   \right)\left(1+cp_2^\ast f_\sigma \right) \omega\wedge \omega.
\end{align*} 
Now $1+cp_2^\ast f_\sigma\geq 1$ for all $c$ by nonnegativity of $c$
and $f_\sigma$. Moreover, by the choice of $f_\tau$ we have $p_1^\ast
f_\tau=\frac{1}{af}$ on $Q\times S^2$. Hence $\Om_H^c$ is symplectic
iff
\begin{equation*}
   1+\frac{1}{f}\left(\frac{c}{a}-\dd{H}{y}\right)>0
\end{equation*}
on $Q\times S^2$. But this is satisfied for 
\begin{equation}\label{eq:c}
   c\geq C:=a\,\max_{Q\times S^2}\left|\frac{\p H}{\p y}\right|.
\end{equation} 
We summarize the preceding discussion in

\begin{lemma}\label{lem:OmHc}
The closed $2$-form $\Om_H^c$ vanishes on $L_\std$, is relatively
cohomologous to $\om_c$ (and thus $\om$), and has trivial holonomy
along each circle of latitude $C^\lambda$ for each $c\geq 0$. 
Moreover, $\Om_H^c$ is symplectic for $c\geq C$ given
by~\eqref{eq:c}. 
\end{lemma}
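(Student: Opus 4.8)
The plan is to collect the four assertions from the computations carried out in the three preceding subsections; no new construction is needed, only careful bookkeeping.

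\emph{Vanishing on $L_\std$ and the relative cohomology class.} Here I would reuse the argument from the proof of Lemma~\ref{lem:hol-latitude}(b). The point established just above is that $\Om_H^c$ and $\Om_H$ induce the same symplectic connection for every $c\ge 0$, because the fibrewise Hamiltonian vector field of $\frac{1}{c+1}H$ with respect to $\frac{1}{c+1}\ostd$ is again $X_{H_x^y}$. Hence the horizontal lift $\wt\p_x=\p_x+X_{H_x^y}$ is tangent to $L_\std$ also for $\Om_H^c$; pairing it with the vertical unit tangent field $v$ along the equators in the fibres and using horizontality $\Om_H^c(\wt\p_x,v)=0$ shows $L_\std$ is Lagrangian for $\Om_H^c$. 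For the cohomology class I would evaluate $\Om_H^c$ on the four generators $[S^2\times\pt]$, $[\pt\times S^2]$, $[D_\lh\times\pt]$, $[\pt\times D_\lh]$ of $H_2(S^2\times S^2,L_\std)$ from Lemma~\ref{lem:homology}. Since $\Om_H^c-\om_c=\frac{1}{c+1}dH\wedge dx$ is supported in $Q\times S^2$, only $[D_\lh\times\pt]$ could register a difference, and there the extra term contributes $\frac{1}{c+1}\int_0^1 H(x,0,e)\,dx=0$ by the normalisation $H(x,0,e)\equiv 0$. Thus $[\Om_H^c]=[\om_c]$ in $H^2(S^2\times S^2,L_\std;\R)$, and $[\om_c]=[\om]$ by Lemma~\ref{lem:omc}(b).

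\emph{Trivial holonomy along the circles of latitude.} This is immediate from the two facts already in hand: $\Om_H^c$ and $\Om_H$ define the same symplectic connection for every $c\ge 0$, and the holonomy of $\Om_H$ along each $C^\lambda$ is trivial by Lemma~\ref{lem:hol-latitude}(a).

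\emph{Symplecticity for $c\ge C$.} I would invoke the computation carried out just above, namely
$$
   \Om_H^c\wedge\Om_H^c=\frac{1}{(c+1)^2}\left(1+cp_1^\ast f_\tau-\frac{1}{f}\dd{H}{y}\right)\left(1+cp_2^\ast f_\sigma\right)\,\om\wedge\om
$$
on $Q\times S^2\subset W_0$, together with $\Om_H^c=\om_c$ outside $Q\times S^2$. On $Q\times S^2$ one has $p_1^\ast f_\tau=\frac{1}{af}$ by the choice of $f_\tau$ and $1+cp_2^\ast f_\sigma\ge 1$ by nonnegativity, so the right-hand side is a positive multiple of $\om\wedge\om$ as soon as $1+\frac{1}{f}\bigl(\frac{c}{a}-\dd{H}{y}\bigr)>0$, which holds for $c\ge C=a\,\max_{Q\times S^2}|\p H/\p y|$; outside $Q\times S^2$ the form $\om_c$ is symplectic by Lemma~\ref{lem:omc}(a). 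Since $\Om_H^c$ is closed, positivity of $\Om_H^c\wedge\Om_H^c$ is equivalent to $\Om_H^c$ being symplectic.

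Since the lemma is genuinely a summary, there is no serious obstacle. The one thing to be careful about is the compatibility of the overlapping regions $Q\subset\inn\wt Q$ and $\wt Q\subset\inn B$, the fact that $H$ is a pullback from the base outside $Q\times A$, and that ${\rm supp}(f_\sigma)$ is disjoint from $A$ — all arranged in the earlier setup, and precisely what makes the connection identifications and the integral computations legitimate.
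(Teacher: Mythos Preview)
Your proposal is correct and matches the paper's approach exactly: the lemma is stated in the paper as a summary of the preceding discussion (the paragraphs ``A new symplectic connection'' and ``Symplecticity''), and you have faithfully collected those computations, invoking Lemma~\ref{lem:hol-latitude} and Lemma~\ref{lem:omc} at the right places. There is nothing to add.
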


\subsection{Killing the holonomy along circles of latitude}

We denote the $0$-meridian by $m_0:=\left\lbrace (\lambda,\mu) \in
S^2\mid \mu=0 \right\rbrace$ in spherical coordinates. Putting the
previous subsections together, we can now prove 

\begin{proposition}\label{Pkillmonolatitude}
Let $\SS=(\FF_\std,\omega,L_\std,S_0,S_\infty)$ be a relative
symplectic fibration such that $\om$ is split 
on a neighbourhood of the fibre $F$ and the sections
$S_0,S_\infty$. Then there exists a homotopy of relative symplectic
fibrations $\SS_t=(\FF_\std,\omega_t,L_\std,S_0,S_\infty)$ with
$\SS_0=\SS$ such that the holonomy of $\SS_1$ along the circles of
latitude $C^\lambda$ is the identity for all $\lambda$. Moreover,
$\omega_1$ is split near the set $(m_0\times S^2)\cup S_0\cup S_\infty$.  
\end{proposition}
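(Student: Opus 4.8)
The plan is to assemble the building blocks from the preceding subsections and then run an inflation argument. Starting from $\SS=(\FF_\std,\omega,L_\std,S_0,S_\infty)$ as in the hypothesis, I would first carry out the reduction described at the beginning of Section~\ref{ss:setup}: pull back by a diffeomorphism of the form $(z,w)\mapsto(\phi(z),w)$ to arrange that $\omega$ is split on the region $W=(B\times S^2)\cup(S^2\times U_0)\cup(S^2\times U_\infty)$, with $B$ the explicit ball of latitude, so that the holonomy maps $\phi^\lambda$ along the circles of latitude $C^\lambda$ make sense as elements of $\Symp_0(A,\p A,\ostd)$, equal the identity for $|\lambda|\ge\frac\pi4$, and preserve $E$ when $\lambda=0$. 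Then I would invoke Proposition~\ref{Tspeccontr} to obtain the special contraction $\psi^\lambda_s$ of the loop of inverses $\psi^\lambda=(\phi^\lambda)^{-1}$ satisfying the four listed properties, in particular $\psi^0_s(E)=E$ for all $s$.

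Next I would build the special Hamiltonian $H:S^2\times S^2\to\R$ of Subsection~\ref{ConstrH}: use Lemma~\ref{LformulaHtFt} together with the relative Poincar\'e lemma to produce the generating functions $F^\lambda_s$ and the associated Hamiltonians $\wt H^\lambda_s$, then subtract the correction $\rho(\lambda)\wt H_E(s)$ to arrange the normalisation $H^0_s|_E\equiv 0$ while keeping the same Hamiltonian vector field, and finally extend $H$ over the polar caps and by zero outside $\wt Q\times S^2$ so that $H$ is supported in $Q\times S^2$, depends only on $(\lambda,\mu)$ outside $Q\times A$, and satisfies $H(0,\mu)|_E\equiv 0$. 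With $H$ in hand, I would form the closed $2$-form $\Omega_H=\omega+dH\wedge dx$ (in the renamed coordinates $x=\mu$, $y=\lambda$) and cite Lemma~\ref{lem:hol-latitude}: this form has trivial holonomy along every $C^\lambda$, vanishes on $L_\std$, and is relatively cohomologous to $\omega$. The only defect is that $\Omega_H$ need not be symplectic, the obstruction being $1-\frac1f\dd Hy>0$.

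To cure this I would apply the inflation procedure of Subsection on Inflation: choose the bump functions $f_\sigma$ on the fibre (supported in $\mathrm{Cap}_S\amalg\mathrm{Cap}_N$ with half its mass on each cap) and $\bar f_\tau$ on the base (supported in $\wt Q$, symmetric in $y$, identically $1$ on $Q$), form $\om_c$ as in~\eqref{eq:omc} and the twisted form $\Om_H^c=\om_c+\frac1{c+1}dH\wedge dx$ of~\eqref{eq:Omc}. By Lemma~\ref{lem:omc} and the discussion following it, $\Om_H^c$ vanishes on $L_\std$, is relatively cohomologous to $\om$, induces the same symplectic connection as $\Omega_H$ (hence has trivial holonomy along all $C^\lambda$), and by Lemma~\ref{lem:OmHc} is symplectic once $c\ge C=a\,\max_{Q\times S^2}\bigl|\dd Hy\bigr|$. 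The symmetry of $\bar f_\tau$ in $y$ and the mass conditions on $f_\sigma$ are precisely what preserve monotonicity of $L_\std$, i.e.\ keep $\om_c$ (and $\Om_H^c$) in the same relative cohomology class; I would make this point explicit since it is the reason the two sections $S_0,S_\infty$ are treated symmetrically. Fixing such a $c\ge C$, set $\om_1:=\Om_H^c$.

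Finally I would produce the homotopy. For $t\in[0,1]$ one can interpolate in a single path of symplectic forms from $\omega$ to $\om_1=\Om_H^c$: first run $c\mapsto\om_c$ for $c$ from $0$ to the chosen value (each $\om_c$ symplectic with $L_\std$ Lagrangian by Lemma~\ref{lem:omc}), then for the final leg turn on the Hamiltonian term, replacing $H$ by $sH$ with $s$ from $0$ to $1$ — for the already-chosen large $c$ the computation of $\Om_H^c\wedge\Om_H^c$ shows $\om_c+\frac{s}{c+1}dH\wedge dx$ stays symplectic for all $s\in[0,1]$, since the inequality $1+\frac1f(\frac ca - s\dd Hy)>0$ holds a fortiori. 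Along this whole path the leaves of $\FF_\std$ stay symplectic, $L_\std$ stays Lagrangian, and the relative cohomology class is constant so $L_\std$ stays monotone; the sections $S_0,S_\infty$ remain symplectic because every form in the family is split near them. Hence $\SS_t=(\FF_\std,\omega_t,L_\std,S_0,S_\infty)$ is a homotopy of relative symplectic fibrations with $\SS_0=\SS$ and $\SS_1$ having trivial holonomy along every $C^\lambda$. For the last assertion, note that by construction $\om_1$ is split away from $\wt Q\times S^2$ and away from $S^2\times A$; in particular it is split on a neighbourhood of $(m_0\times S^2)\cup S_0\cup S_\infty$, since the $0$-meridian $m_0$ avoids the interior of $Q$ in the $x=\mu$ direction (where $H$ is supported $\mu\in[2\epsilon,1-2\epsilon]$) — here I would, if necessary, first conjugate by a rotation in $\mu$ so that $m_0$ genuinely lies outside $\overline Q$, exactly as the set $B$ was positioned at the start. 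The main obstacle is bookkeeping: one must check that \emph{every} form in the interpolating family is simultaneously symplectic, makes the $\FF_\std$-leaves symplectic, keeps $L_\std$ Lagrangian and monotone, and keeps $S_0,S_\infty$ symplectic, and that the region where splitness holds is large enough to contain $(m_0\times S^2)\cup S_0\cup S_\infty$; none of the individual checks is deep, but they must all be lined up consistently, and the monotonicity-preserving symmetry of the inflation is the conceptual heart of the argument.
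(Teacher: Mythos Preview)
Your proposal is correct and follows essentially the same approach as the paper: reduce to the setup of Section~\ref{ss:setup}, build the special Hamiltonian $H$ via Proposition~\ref{Tspeccontr} and Section~\ref{ConstrH}, then concatenate the inflation homotopy $c\mapsto\om_c$ (Lemma~\ref{lem:omc}) with the homotopy $s\mapsto\om_C+\frac{s}{C+1}dH\wedge dx$ (Lemma~\ref{lem:OmHc}). The paper's proof is exactly this two-leg concatenation, only stated more tersely by citing the lemmas.

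One small cleanup for your final paragraph: the claim that $\om_1$ is ``split away from $\wt Q\times S^2$'' is not literally true, since outside $B$ the original $\om$ need not be split and the inflation does not change that. What you actually need (and what your subsequent sentence correctly uses) is that $m_0\subset B$ lies outside $\wt Q$, so near $m_0\times S^2$ one has $H=0$, $\tau=0$, and $\om$ split; and that near $S_0\cup S_\infty$ the form $\om$ is split and $H$ depends only on base variables. With those two observations the splitness of $\om_1$ near $(m_0\times S^2)\cup S_0\cup S_\infty$ follows directly, and no extra rotation in $\mu$ is needed since $m_0=\{\mu=0\}$ already lies outside $\wt Q=\{\eps\le\mu\le 1-\eps\}$.
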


\begin{proof}
As explained at the beginning of Section~\ref{ss:setup}, we may assume
that $\om$ is split on a set $(B\times S^2)\cup\bigl((S^2\times
(U_0\cup U_\infty)\bigr)$, where the ball $B\subset S^2$ contains the
$0$-meridian $m_0$. 
Let $H$ be the Hamiltonian function constructed in
Section~\ref{ConstrH} and let $C$ be the constant defined
in~\eqref{eq:c}. For $c\in[0,C]$, let $\om_c$ be the form defined
in~\eqref{eq:omc}. By Lemma~\ref{lem:omc},
$(\FF_\std,\omega_c,L_\std,S_0,S_\infty)$ gives a homotopy of
relative symplectic fibrations from $\SS$ to
$(\FF_\std,\omega_C,L_\std,S_0,S_\infty)$. For $t\in[0,1]$,
consider the forms 
$$
   \Om_{tH}^C = \om_C+\frac{t}{C+1}dH\wedge dx = (1-t)\om_C+t\,\Om_H^C
$$
as in~\eqref{eq:Omc} (with $H$ replaced by $tH$ and $c$ by $C$). By
Lemma~\ref{lem:OmHc} (applied to $tH$),
$(\FF_\std,\Om_{tH}^C,L_\std,S_0,S_\infty)$ gives a homotopy of 
relative symplectic fibrations from
$(\FF_\std,\omega_C,L_\std,S_0,S_\infty)$ to
$\SS_1=(\FF_\std,\Om_H^C,L_\std,S_0,S_\infty)$. By the same
lemma, $\SS_1$ has trivial holonomy along all circles of latitude. 
Hence the concatenation of the previous two homotopies gives the
desired homotopy $\SS_t$. For the last assertion, simply observe that 
by construction all symplectic forms in this homotopy agree with the
original split form $\om$ near $(m_0\times S^2)\cup S_0\cup S_\infty$.  
\end{proof}

\begin{remark}\label{rem:Ham}
The point of departure for the preceding subsections was the
standardisation provided by
Proposition~\ref{prop:section-standard}. If rather than making the
symplectic form $\om$ split we had made it equal to $\om_\std$
near $(m_0\times S^2)\cup S_0\cup S_\infty$ (as suggested in
Remark~\ref{rem:section-standard}), then the holonomies
$\phi^\lambda$ would lie in the subgroup $\Ham(A,\p
A,\ostd)\subset\Symp_0(A,\p A,\ostd)$ of symplectomorphisms generated
by Hamiltonians with compact support in $A\setminus\p A$ and the whole
construction could be performed in that subgroup (which is also
contractible). However, since we change the normalisation of the
Hamiltonians $H^\lambda_s$ anyway to make them vanish on the equator, 
we would gain nothing from working in this subgroup. 
\end{remark}

\subsection{Killing all the holonomy}

Now we will further deform the relative symplectic fibration from the
previous subsection to one which has trivial holonomy along {\em all}
closed curves in the base. We begin with a simple lemma.

\begin{lemma}\label{Llinearformsinterpol}
Let $\omega,\omega'$ be linear symplectic forms on $\mathbb{R}^4$
which define the same orientation and agree on a real codimension one
hyperplane $H$. Then $\omega_t:=(1-t)\omega+t\omega'$ is symplectic for all
$t\in[0,1]$. 
\end{lemma}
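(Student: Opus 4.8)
The plan is to use the shared hyperplane to bring both forms simultaneously into a normal form in which the top exterior power is affine in $t$. First I would note that $H$ cannot be isotropic for $\omega$: since $\dim H^\omega = 4 - \dim H = 1$, the inclusion $H \subseteq H^\omega$ is impossible, so $\omega|_H \neq 0$. An alternating bilinear form on a $3$-dimensional space has even rank, so $\omega|_H$ has rank $2$ and hence a $1$-dimensional kernel; the same holds for $\omega'|_H$, which equals $\omega|_H$ by hypothesis. I would then choose a basis $e_1, e_2, e_3$ of $H$ with $\ker(\omega|_H) = \R e_3$ and, after rescaling, $\omega|_H = e^1 \wedge e^2$, and extend it to a basis $e_1, \dots, e_4$ of $\R^4$ by picking any $e_4 \notin H$; here $e^1, \dots, e^4$ denotes the dual basis, so that $e^4$ vanishes on $H$.

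Next, since both $\omega$ and $\omega'$ restrict to $e^1 \wedge e^2$ on $H$ and $e^4|_H = 0$, one can write
$$
   \omega = e^1 \wedge e^2 + \beta \wedge e^4, \qquad
   \omega' = e^1 \wedge e^2 + \beta' \wedge e^4
$$
for uniquely determined $1$-forms $\beta, \beta' \in \mathrm{span}(e^1, e^2, e^3)$. Hence $\omega_t = e^1 \wedge e^2 + \bigl((1-t)\beta + t\beta'\bigr) \wedge e^4$ for every $t$, which is again of this shape.

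The nondegeneracy check is then a one-line computation: $\omega \wedge \omega = 2\, e^1 \wedge e^2 \wedge \beta \wedge e^4$, and only the $e^3$-component of $\beta$ survives, so if $b$ denotes this component then $\omega \wedge \omega = 2b\, e^1 \wedge e^2 \wedge e^3 \wedge e^4$; likewise $\omega' \wedge \omega' = 2b'\, e^1 \wedge e^2 \wedge e^3 \wedge e^4$ with $b'$ the $e^3$-component of $\beta'$. Nondegeneracy of $\omega$ and $\omega'$ forces $b \neq 0$ and $b' \neq 0$, and the assumption that the two forms induce the same orientation says exactly that $bb' > 0$. Since $\omega_t \wedge \omega_t = 2\bigl((1-t)b + tb'\bigr)\, e^1 \wedge e^2 \wedge e^3 \wedge e^4$ and $(1-t)b + tb'$ is a convex combination of two nonzero reals of the same sign, it never vanishes for $t \in [0,1]$; thus $\omega_t$ is nondegenerate, i.e.\ symplectic.

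I do not anticipate a real obstacle; the only point worth stressing is that the hypothesis on $H$ must genuinely be used. Two linear symplectic forms inducing the same orientation are in general \emph{not} joined through their linear interpolation, because $\alpha \mapsto \alpha \wedge \alpha$ on $\Lambda^2(\R^4)^\ast$ is a quadratic form of signature $(3,3)$ whose positive cone is not convex. Agreement along a codimension-one hyperplane is precisely what reduces the available freedom to the single scalar $b$, rendering $\omega_t \wedge \omega_t$ affine and sign-definite in $t$.
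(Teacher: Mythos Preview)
Your proof is correct and follows essentially the same strategy as the paper: choose a basis of $\R^4$ adapted to $H$, use the orientation hypothesis to pin down the sign of a single scalar, and conclude that $\omega_t\wedge\omega_t$ stays nonzero. The paper picks an $\omega$-symplectic basis $e_1,f_1,e_2,f_2$ with $H=\mathrm{span}(e_1,f_1,e_2)$, writes a complementary vector $f_2'$ for $\omega'$, and evaluates the quadratic $(1-t)^2\omega\wedge\omega+2t(1-t)\omega\wedge\omega'+t^2\omega'\wedge\omega'$ on $(e_1,f_1,e_2,f_2')$; your normal form $\omega_t=e^1\wedge e^2+\bigl((1-t)\beta+t\beta'\bigr)\wedge e^4$ is a slightly cleaner packaging of the same computation, since it makes the affine dependence of $\omega_t\wedge\omega_t$ on $t$ visible without expanding the cross term.
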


\begin{proof}
Take a symplectic basis $e_1,f_1,e_2,f_2$ for $\omega$ such that
$e_1,f_1,e_2$ is a basis of $H$. Take a vector
$f_2'=a_1e_1+b_1f_1+a_2e_2+b_2f_2$ such that $e_1,f_1,e_2,f_2'$ is a
symplectic basis for $\omega'$. Since $\omega,\omega'$ induce the
same orientation, we have $b_2>0$, and therefore 
$$
   \omega(e_2,f_2')=b_2>0,\qquad \omega'(e_2,f_2)=\frac{1}{b_2}>0.
$$
For $\omega_t:=(1-t)\omega+t\omega'$ we find 
$$
   \omega_t\wedge \omega_t=(1-t)^2\omega\wedge\om+2t(1-t)\omega\wedge
   \omega'+t^2\omega'\wedge\om',
$$
and therefore 
\begin{align*}
   \omega_t\wedge\omega_t(e_1,f_1,e_2,f_2')
   &= 2(1-t)^2\omega(e_1,f_1)\omega(e_2,f_2')+2t^2\omega'(e_1,f_1)\omega'(e_2,f_2')\cr
   &\ \ \ +2t(1-t)\Bigl(\omega(e_1,f_1)\omega'(e_2,f_2')+\omega(e_2,f_2')\omega'(e_1,f_1)
   \Bigl) \cr 
   &> 0.  
\end{align*}
\end{proof}

Recall the definition of the $0$-meridian $m_0$ from the previous
subsection. 

\begin{proposition}\label{Ptrivholonomy}
Let $\SS=(\FF_\std,\om,L_\std,S_0,S_\infty)$ be a relative
symplectic fibration which is split near the set $S_0\cup
S_\infty\cup(m_0\times S^2)$
and has trivial holonomy around all circles of latitude $C^\lambda$.
Then there exists a homotopy of relative symplectic fibrations
$\SS_t=(\FF_\std,\om_t,L_\std,S_0,S_\infty)$ with $\om_0=\om$
and $\om_1=\om_\std$. 
\end{proposition}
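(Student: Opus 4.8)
The plan is to bring $\om$, by a homotopy of relative symplectic fibrations, into a normal form in which its fibrewise restrictions and its ``$\mu$-horizontal'' direction already coincide with those of $\om_\std$, and then to finish with the straight-line homotopy $\om_t=(1-t)\om+t\om_\std$, using Lemma~\ref{Llinearformsinterpol} pointwise to keep every $\om_t$ symplectic.

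For the normal form I would exploit the two hypotheses on $\om$ — trivial holonomy around every circle of latitude $C^\lambda$, and splitness near $m_0\times S^2$ (hence flatness of the symplectic connection there, and in particular near both base poles, since $m_0$ runs from pole to pole) — to straighten the connection in the $\mu$-direction. Parallel transport along the concatenation of $m_0$ from $z_0$ to $(\lambda,0)$ with $C^\lambda$ from $(\lambda,0)$ to $(\lambda,\mu)$ is well defined, precisely because the holonomy around $C^\lambda$ is trivial, and yields a fibre-preserving diffeomorphism $\Phi$ of $S^2\times S^2$. Since parallel transport is symplectic, $\Phi^*\om$ has fibre form identically $\sigma_\std$ (possibly after one further fibrewise Moser isotopy, chosen to preserve the equator and the fibre poles $N,S$), and the $\Phi^*\om$-horizontal lift of $\partial_\mu$ equals $\partial_\mu$. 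As $\om$ is flat near $m_0\times S^2$ and near $S_0\cup S_\infty$, the diffeomorphism $\Phi$ is the identity there, so it extends smoothly over the base poles and preserves $S_0$, $S_\infty$; it also preserves $L_\std$, because over $C^0=p_1(L_\std)$ the fibre identifications are parallel transports along $C^0$, under which $L_\std$ is invariant by Proposition~\ref{PPTlagtorusfibreposition}. The delicate point is that $\Phi$ is isotopic to the identity through fibre-preserving diffeomorphisms preserving $L_\std$, $S_0$ and $S_\infty$: the associated family of fibre symplectomorphisms is a based map $S^2\to\Symp(A,\partial A,\ostd)$ which over $C^0$ takes values in the subgroup fixing the equator, and the (weak) contractibility of these groups (cf.~Appendix~\ref{app:diff} and Proposition~\ref{PHE}) allows one to null-homotope it compatibly with the constraints. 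Pulling $\SS$ back along this isotopy is a homotopy of relative symplectic fibrations, so from now on I assume that $\om$ restricts to $\sigma_\std$ on every leaf of $\FF_\std$, that its horizontal lift of $\partial_\mu$ is $\partial_\mu$, and that it is split near $m_0\times S^2$ and near $S_0\cup S_\infty$.

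It then remains to check that $\om_t=(1-t)\om+t\om_\std$ defines a relative symplectic fibration $(\FF_\std,\om_t,L_\std,S_0,S_\infty)$ for all $t\in[0,1]$. Closedness is clear. For symplecticity, at a point $p=(z,w)$ off the two base poles I would look at the $3$-dimensional subspace $W_p=T_w(p_1^{-1}(z))\oplus\R\,\partial_\mu|_p$ of $T_pM$: both $\om_p$ and $(\om_\std)_p$ restrict to $\sigma_\std$ on the vertical plane and kill $\partial_\mu$ against vertical vectors (the first because $\partial_\mu$ is $\om$-horizontal, the second because $\om_\std$ is split), so they agree on $W_p$, and as both induce the standard orientation of $S^2\times S^2$ (being cohomologous to $\om_\std$) Lemma~\ref{Llinearformsinterpol} shows that $\om_t|_p$ is symplectic; near the base poles $\om$ and $\om_\std$ are both split, so $\om_t$ is split and symplectic there too. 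The remaining conditions follow by linearity: $\om_t$ restricts to $\sigma_\std$ on the leaves, $\om_t(A)=\om_t(B)=1$, each of $\om_t|_{S_0}$ and $\om_t|_{S_\infty}$ is a convex combination of two positive area forms on $S^2$ of total area $1$ and hence an area form, $S_0$ and $S_\infty$ stay disjoint, the solid torus $T=T_\std$ and the incidence relations of $S_0$, $S_\infty$ with it are unchanged, and $L_\std$ stays Lagrangian with $\int_\sigma\om_t=\frac{1}{4}\mu(\sigma)$ for all $\sigma\in\pi_2(S^2\times S^2,L_\std)$ by monotonicity of $L_\std$ for both $\om$ and $\om_\std$, so $L_\std$ remains monotone. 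Concatenating the straightening homotopy with $\{\om_t\}$ gives the required homotopy from the original $\om$ to $\om_\std$. The main obstacle I expect is the straightening: constructing $\Phi$ globally across the base poles, arranging that it preserves $L_\std$, $S_0$, $S_\infty$, and above all verifying that it is isotopic to the identity through structure-preserving diffeomorphisms — exactly where the contractibility of the symplectomorphism groups of the annulus is indispensable.
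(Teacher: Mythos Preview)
Your proposal is essentially the same as the paper's proof: define a fibre-preserving diffeomorphism $\Phi$ via parallel transport along circles of latitude (which is well defined by the trivial-holonomy hypothesis and equals the identity near $S_0\cup S_\infty\cup(m_0\times S^2)$ by splitness), argue that $\Phi$ is isotopic to the identity through fibre-preserving diffeomorphisms preserving $L_\std$ using the vanishing of the relevant homotopy groups of the annulus groups, pull back $\SS$ along this isotopy, and then finish with the straight-line homotopy justified by Lemma~\ref{Llinearformsinterpol} on the hyperplanes $T(C^\lambda\times S^2)$. Two small remarks: the parenthetical Moser step is unnecessary, since $\om$ already restricts to $\sigma_\std$ on the fibre over any point of $m_0$; and for the contraction the paper works in $\Diff(A,\partial A)$ (using Corollaries~\ref{TDiff} and~\ref{cor:Diff}) rather than $\Symp$, though by Proposition~\ref{PHE} this makes no difference.
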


\begin{proof}
The idea of the proof is to use parallel transport along 
circles of latitude to define a fibre preserving diffeomorphism
$\phi$ of $S^2\times S^2$ which pulls back the symplectic form $\om$
to a form which agrees with the standard form $\omega_\std$ on 
$C^\lambda\times S^2$ for all $\lambda$, and then apply
Lemma~\ref{Llinearformsinterpol}. 

\begin{figure}[h!]
 \centering
\psfrag{Px}{$\id$}
\psfrag{plx}{$P^\lambda_\mu$}
\psfrag{fN}{$p_1^{-1}(N)$}
\psfrag{fxy}{$p_1^{-1}(\lambda,\mu)$}
\psfrag{fx0}{$p_1^{-1}(\lambda,0)$}
\psfrag{m0}{$m_0$}
\psfrag{p1}{$\id$}
\psfrag{p2}{$\id$}
\psfrag{x0}{$(\lambda,0)$}
\psfrag{xy}{$(\lambda,\mu)$}
\psfrag{alpha}{$\id$}
\psfrag{phi}{$\phi$}

 \includegraphics[width=12cm]{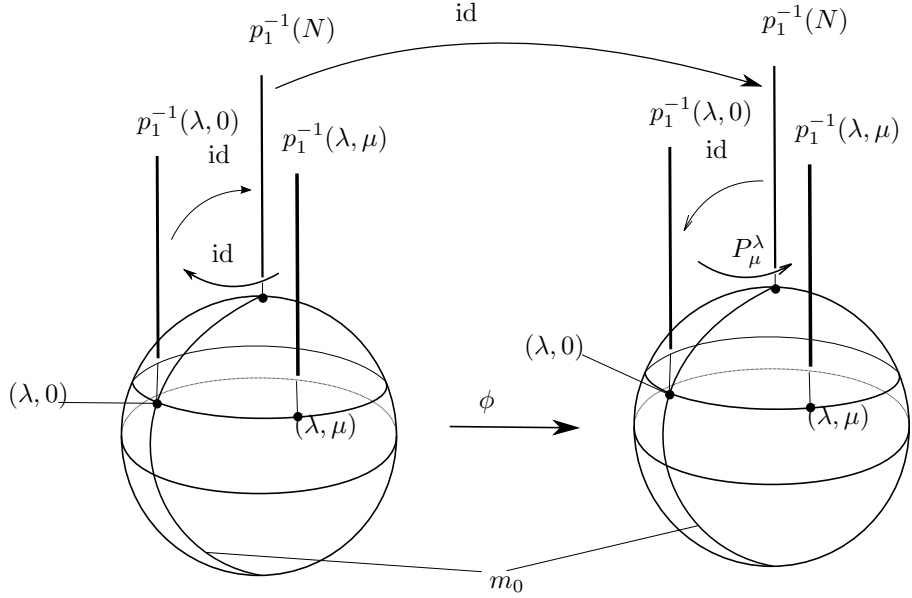}
 \caption{The maps $P^\lambda_\mu$ and the construction of $\phi$}
 \label{f410}
\end{figure}

For each $\lambda,\mu$ let
$$
   P^\lambda_\mu \colon \left\lbrace (\lambda,0) \right\rbrace\times
   S^2  \to  \left\lbrace (\lambda,\mu) \right\rbrace\times S^2
$$ 
be the parallel transport of (the symplectic connection on $p_1$
defined by) $\om$ along the circle of latitude
$C^\lambda$ from $(\lambda,0)$ to $(\lambda,\mu)$. Since $\om$ has
trivial holonomy along $C^\lambda$, this does not depend on the path
in $C^\lambda$ and is thus well-defined. Note that, due to the fact
that $\om$ is split near $S_0\cup S_\infty$, the map $P^\lambda_\mu$
equals the identity near the north and south poles $N,S$ in the fibre. 

We define a fibre preserving diffeomorpism $\phi$ of $S^2\times S^2$ by
parallel transport on the left sphere in 
Figure~\ref{f410} with respect to the standard form $\omega_\std$ 
first going backwards along the circle of latitude until we hit the
meridian $m_0$, then upwards along $m_0$ until we hit the north
pole $N$, then by the identity 
to the fibre over the north pole of the right sphere, then by
parallel transport with respect to $\om$ along $m_0$, and finally along
the circle of latitude to land in the fibre over the original point
$(\lambda,\mu)$. Since parallel transport with respect to the
symplectic connection $\omega_\std$ is the identity for all paths, as is
parallel transport with respect to $\om$ along paths in $m_0$ (since
$\om$ is split over $m_0$), we will
not explicitly include these maps in the notation. Then we can
write the preceding definition in formulas as 
$$
   \phi \colon S^2\times S^2\to S^2\times S^2,\qquad 
   \Bigl((\lambda,\mu),w\Bigr) \mapsto
   \Bigl((\lambda,\mu)\,,\,P^\lambda_\mu(w)\Bigr).  
$$
Note that $\phi$ is smooth for $(\lambda,\mu)$ near the north and
south poles because there $P^\lambda_\mu=\id$. For $z\in S^2$ let us
denote by $\om_z$ the restriction of $\om$ to the fibre
$F_z=\{z\}\times S^2$. We claim that $\phi$ has the following properties:
\begin{itemize}
\item[(a)] $\phi$ restricts to symplectomorphisms
  $(F_z,\ostd)\to(F_z,\om_z)$ on all fibres; 
\item[(b)] $\phi$ preserves the Clifford torus $L_\std$;
\item[(c)] $\phi$ equals the identity near $S_0\cup S_\infty\cup(m_0\times
   S^2)$; 
\item[(d)] $\phi$ is isotopic to the identity through fibre preserving
  diffeomorphisms $\phi_t$ that preserve $L_\std$ and equal the
  identity near $S_0\cup S_\infty\cup(m_0\times S^2)$.   
\end{itemize}

For (a), note that $\om$ restricts to
$\ostd$ on the fibre $F_N$ over the north pole (because $\om$ is
split there), so the identity defines a symplectomorphism
$(F_N,\ostd)\to (F_N,\om_N)$. Now (a) follows because
parallel transport is symplectic. 

Property (b) follows from the fact that $L_\std$ is given by parallel
transport of the equator in the fibre around the equator in the base,
hence $P^0_\mu(E)=E$ and thus
$$
   \phi(\left\lbrace(0,\mu) \right\rbrace\times E) =
   \left\lbrace (0,\mu) \right \rbrace \times P^0_\mu(E) = 
   \left\lbrace (0,\mu) \right \rbrace\times E.
$$
Property (c) holds because $\om$ is split near $S_\infty\cup S_0\cup(m_0\times
   S^2)$. 

For (d), consider the map
$$
   P\colon
   R:=[-\frac{\pi}{2},\frac{\pi}{2}]\times[0,2\pi]\to\Diff(A,\p
   A),\qquad (\lambda,\mu)\mapsto P^\lambda_\mu. 
$$
It maps the boundary $\p R$ to $\id$ and the interval
$\{0\}\times[0,2\pi]$ to the subspace $\Diff(A,\p A;E)\subset
\Diff(A,\p A)$ of diffeomorphisms preserving the equator $E$ (as a
set, not pointwise). By Corollary~\ref{cor:Diff}, the loop
$[0,2\pi]\ni\mu\mapsto P^0_\mu$ is contractible in $\Diff(A,\p A;E)$. 
Using this and the fact (from Corollary~\ref{TDiff}) that
$\pi_2\Diff(A,\p A)=0$, we find a contraction of $P$ through maps
$P_t:R\to \Diff(A,\p A)$ sending $\p R$ to $\id$ and
$\{0\}\times[0,2\pi]$ to $\Diff(A,\p A;E)$. 
(The argument is analogous to the proof of
Proposition~\ref{Tspeccontr}, with the $1$-parametric family
$\psi^\lambda$ replaced by the $2$-parametric family $P^\lambda_\mu$.)
Then $\phi_t((\lambda,\mu),w):=((\lambda,\mu),P_t(\lambda,\mu)(w))$ is
the desired isotopy and the claim is proved. 

Now we construct the homotopy from $\om$ to $\om_\std$ in two steps. For the
first step, let $\phi_t$ be the isotopy in (d) from $\phi_0=\id$ to
$\phi_1=\phi$. Then
$\phi_t^{-1}(\SS)=(\FF_\std,\phi_t^*\om,L_\std,S_0,S_\infty)$ is
a homotopy of relative symplectic fibrations from $\SS$ to
$(\FF_\std,\phi^*\om,L_\std,S_0,S_\infty)$. 

For the second step, note that $\phi^*\om$ restricts to $\ostd$ on every
fibre by property (a). Moreover, since $\phi$ commutes with parallel transport along
$C^\lambda$ (with respect to $\om_\std$ and $\om$), the horizontal lifts
of vectors tangent to circles of latitude with respect to $\omega_\std$
and $\phi^*\om$ agree. Accordingly, $\omega_\std$ and $\phi^*\om$
agree on the 3-dimensional subspaces
$T_{((\lambda,\mu),w)}(C^\lambda\times S^2)$ in
$T_{((\lambda,\mu),w)}(S^2\times S^2)$ for all $((\lambda,\mu),w) \in
S^2\times S^2$. Thus, by Lemma \ref{Llinearformsinterpol}, the linear
interpolations $\om_t:=(1-t)\phi^*\om+t\omega_\std$ are symplectic for
all $t\in[0,1]$. 

We claim that 
$(\FF_\std,\om_t,L_\std,S_0,S_\infty)$ is a relative symplectic
fibration for all $t\in[0,1]$. For this, first note that $L_\std$ is
monotone Lagrangian for both $\om_\std$ and $\phi^*\omega$: For $\om_\std$
this is clear, and for $\phi^*\omega$ it follows from
$\phi(L_\std)=L_\std$ and monotonicity of $L_\std$ for
$\om$. Hence $L_\std$ is monotone Lagrangian for $\om_t$ for all
$t$. Next, since $\phi$ preserves fibres as well as the
sections $S_0,S_\infty$, the form $\phi^*\om$ and thus also the form
$\om_t$ is cohomologous to $\om_\std$ for all $t$. Finally, since $\phi$
preserves the sections $S_0,S_\infty$, they are symplectic for 
$\phi^*\om$ as well as $\om_\std$, hence for all $\om_t$. 

The desired homotopy $\om_t$ is the concatenation of the homotopies
constructed in the two steps. 
This concludes the proof of Proposition~\ref{Ptrivholonomy}.
\end{proof}

\begin{remark}\label{rem:trivholonomy}
The two steps in the proof of Proposition~\ref{Ptrivholonomy} could
have been performed in the opposite order: First homotope $\om$ to the
form $\phi_*\om_\std$ which has trivial holonomy along all closed curves
in the base, and then homotope $\phi_*\om_\std$ to $\om_\std$. The latter is
then a special case of the more general fact that two symplectic
fibrations with conjugate holonomy (e.g., both having trivial holonomy)
are diffeomorphic.
\end{remark}

\subsection{Proof of the main theorem and some consequences}\label{ss:proof}

We summarize the results of this and the previous section in

\begin{thm}[Classification of relative symplectic fibrations]\label{thm:fib}
Every relative symplectic fibration $\SS=(\FF,\om,L,\Sigma,\Sigma')$
on $S^2\times S^2$ is deformation equivalent to
$\SS_\std=(\FF_\std,\om_\std,L_\std,S_0,S_\infty)$.  
\end{thm}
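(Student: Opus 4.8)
The plan is to assemble Theorem~\ref{thm:fib} from the standardisation and holonomy-killing results of Sections~\ref{sec:stand} and~\ref{sec:killholonomy}, chaining them into one long homotopy preceded by a single pullback by a diffeomorphism. First I would invoke Proposition~\ref{prop:fix-F} to produce a diffeomorphism $\phi\in\Diff_\id(M)$ with $\phi^{-1}(\SS)=(\FF_\std,\wt\om,L_\std,S_0,S_\infty)$ for some symplectic form $\wt\om$. Since deformation equivalence is insensitive to pullback by $\Diff_\id(M)$ (Definition~\ref{DisoAMSF}(c), and the equivalence relation property), it suffices to exhibit a homotopy of relative symplectic fibrations from this normalised fibration to $\SS_\std$. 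From this point on the foliation, the Lagrangian torus and both sections will remain fixed equal to $(\FF_\std,L_\std,S_0,S_\infty)$, so only the symplectic form moves.

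Next I would apply Proposition~\ref{prop:section-standard} to homotope $(\FF_\std,\wt\om,L_\std,S_0,S_\infty)$ to a relative symplectic fibration whose symplectic form is split on a neighbourhood $W=(U_F\times S^2)\cup(S^2\times U_0)\cup(S^2\times U_\infty)$ of $F\cup S_0\cup S_\infty$. A further pullback by a diffeomorphism of the form $(z,w)\mapsto(\phi(z),w)$ with $\phi\in\Diff_+(S^2)$ — which is isotopic to the identity, hence realisable by a homotopy of relative symplectic fibrations rather than costing us the diffeomorphism in the definition of deformation equivalence — enlarges $U_F$ to the ball $B$, putting us exactly in the setup of Section~\ref{ss:setup}. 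Then Proposition~\ref{Pkillmonolatitude} provides a homotopy to a fibration $(\FF_\std,\om_1,L_\std,S_0,S_\infty)$ whose holonomy along every circle of latitude $C^\lambda$ is trivial and whose form is split near $(m_0\times S^2)\cup S_0\cup S_\infty$. This is precisely the hypothesis of Proposition~\ref{Ptrivholonomy}, which supplies a homotopy from $\om_1$ to $\om_\std$, landing at $\SS_\std$. Concatenating all the homotopies and composing with the pullback by $\phi$, we conclude that $\SS$ is deformation equivalent to $\SS_\std$.

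The genuine content of the theorem is already contained in the four cited propositions, so the only work here is careful bookkeeping at the splices. Concretely, I would check at each transition that the endpoint of the previous homotopy has exactly the form demanded as input of the next proposition: that $(\FF_\std,L_\std,S_0,S_\infty)$ is literally preserved throughout; that the ``split on $W$'' conclusion of Proposition~\ref{prop:section-standard} is genuinely upgraded to ``split on $(B\times S^2)\cup(S^2\times(U_0\cup U_\infty))$'' by the base diffeomorphism before feeding into Proposition~\ref{Pkillmonolatitude}; and that the only diffeomorphism for which isotopy to the identity is unknown, namely the $\phi$ of Proposition~\ref{prop:fix-F}, is harmless because it lies in $\Diff_\id(M)$, which is all that deformation equivalence requires. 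No single step is hard; the point to get right is that the class of fibrations output by each stage matches the class expected as input of the next.
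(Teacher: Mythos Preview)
Your proposal is correct and follows essentially the same route as the paper's proof: apply Proposition~\ref{prop:fix-F} to normalise by a diffeomorphism in $\Diff_\id(M)$, then chain Propositions~\ref{prop:section-standard}, \ref{Pkillmonolatitude}, and~\ref{Ptrivholonomy} into a single homotopy to $\SS_\std$. One small inaccuracy: during the homotopy produced by Proposition~\ref{prop:section-standard} the sections and foliation need not literally stay equal to $(\FF_\std,S_0,S_\infty)$ at every moment (its proof passes through intermediate $\wt\Sigma,\wt\Sigma'$), but only the endpoints matter for concatenation, so this does not affect the argument.
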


\begin{proof}
By Proposition~\ref{prop:fix-F}, $\SS$ is diffeomorphic to a relative 
symplectic fibration of the form
$\wt\SS=(\FF_\std,\wt\omega,L_\std,S_0,S_\infty)$ for some symplectic
form $\wt\om$. Combining Proposition~\ref{prop:section-standard},
Proposition~\ref{Pkillmonolatitude} and
Proposition~\ref{Ptrivholonomy}, we find a homotopy from $\wt\SS$ to
$\SS_\std$.  
\end{proof}

The main theorem will be a consequence of Theorem~\ref{thm:fib} and the
following theorem of Gromov. 

\begin{theorem}[Gromov~\cite{Gro}]\label{G85}
Let $\phi \in Symp(S^2\times S^2,\omega_\std)$ act trivially on
homology. Then there exists a symplectic isotopy $\phi_t \in
Symp(S^2\times S^2,\omega_\std)$ with $\phi_0=\id$ and $\phi_1=\phi$. 
\end{theorem}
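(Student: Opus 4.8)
The plan is to recall Gromov's original argument via pseudoholomorphic curves, which in fact shows that $\Symp(S^2\times S^2,\om_\std)$ deformation retracts onto its subgroup of isometries of the product round metric; the subgroup acting trivially on homology is $SO(3)\times SO(3)$, which is path connected, so it suffices to exhibit a symplectic isotopy from $\id$ to $\phi$. Fix the integrable split complex structure $J_0=j\oplus j$ on $S^2\times S^2=\C P^1\times\C P^1$, which is compatible with $\om_\std$. Then $J_1:=\phi_*J_0$ (i.e.\ $d\phi\circ J_0=J_1\circ d\phi$) is again $\om_\std$-compatible, and since the space $\JJ(\om_\std)$ of compatible almost complex structures is contractible we may choose a smooth path $(J_t)_{t\in[0,1]}$ in $\JJ(\om_\std)$ from $J_0$ to $J_1$. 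The geometric input is that for \emph{every} compatible $J$, and for each of the classes $A=[S^2\times\pt]$ and $B=[\pt\times S^2]$, there passes through every point a unique $J$-holomorphic sphere in that class: existence and compactness of the moduli spaces follow from Gromov compactness together with the fact that the only $J$-effective classes are $mA+nB$ with $m,n\ge 0$, so $B$ (and $A$) admits no splitting into two classes of positive $\om_\std$-area and hence cannot bubble; regularity is automatic since $c_1(B)=c_1(A)=2$ on a $4$-manifold; and since $A\cdot A=B\cdot B=0$ while $A\cdot B=1$, positivity of intersections shows the class-$B$ spheres (resp.\ class-$A$ spheres) are pairwise disjoint and that each $A$-sphere meets each $B$-sphere transversally in exactly one point. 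Thus each $J_t$ yields two everywhere transverse foliations $\FF_t^A,\FF_t^B$ of $S^2\times S^2$ by symplectic $2$-spheres, with leaf spaces diffeomorphic to $S^2$, varying smoothly with $t$.

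Next, identifying the leaf spaces of $\FF_t^A$ and $\FF_t^B$ with $S^2$ smoothly in $t$, starting from the obvious identifications at $t=0$ (possible because the bundle of leaf spaces over $[0,1]$ is trivial), the pair of leaf-space projections defines a smooth family of diffeomorphisms $\Phi_t$ of $S^2\times S^2$ with $\Phi_0=\id$ carrying $\FF_t^A,\FF_t^B$ to the standard product foliations $\FF_0^A,\FF_0^B$. Since $\phi$ is $(J_0,J_1)$-holomorphic and homology-trivial it maps $\FF_0^\bullet$ to $\FF_1^\bullet$, so $\Phi_1\circ\phi$ preserves both standard product foliations and is therefore a product $\alpha\times\beta$ with $\alpha,\beta\in\Diff_+(S^2)$. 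As $\Diff_+(S^2)$ is connected, $\Phi_1\circ\phi$ is smoothly isotopic to $\id$ through products; combining with the isotopy $\Phi_t$ we obtain a smooth isotopy $g_s$ of diffeomorphisms with $g_0=\id$ and $g_1=\phi$.

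It remains to upgrade $g_s$ to a \emph{symplectic} isotopy. Put $\om_s:=g_s^*\om_\std$, a path of symplectic forms all cohomologous to $\om_\std$ (each $g_s$ being homology-trivial) with $\om_0=\om_1=\om_\std$; Moser's theorem applied to this path provides $h_s\in\Diff(M)$ with $h_0=\id$ and $h_s^*\om_s=\om_\std$, so $g_s\circ h_s$ is a path \emph{inside} $\Symp(M,\om_\std)$ from $\id$ to $\phi\circ h_1$. One then verifies, by running the straightening construction compatibly with the symplectic forms (using that the reduced forms on the two leaf spaces and the fibrewise forms can be kept standard, and that $\Symp(S^2,\ostd)$, being homotopy equivalent to $SO(3)$, is connected), that the residual correction $h_1$ lies in the identity component of $\Symp(M,\om_\std)$; hence so does $\phi$, which is the claim. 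The main obstacle is precisely this last point: making the pseudoholomorphic straightening and Moser's theorem cooperate so that the resulting isotopy stays inside $\Symp(S^2\times S^2,\om_\std)$ rather than merely $\Diff(S^2\times S^2)$, and it is here that the argument genuinely uses the contractibility of $\JJ(\om_\std)$ and the connectedness of the group of area-preserving diffeomorphisms of $S^2$.
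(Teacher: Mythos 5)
The paper invokes this statement as a theorem of Gromov~\cite{Gro} and does not prove it, so there is no internal argument to compare against; I therefore assess the proposal on its own terms. The first two-thirds are sound and follow the standard pseudoholomorphic strategy: the foliations $\FF^A_{J_t},\FF^B_{J_t}$ by $J_t$-holomorphic spheres in classes $A$ and $B$ exist, are unobstructed, compact and pairwise transverse by the monotonicity, adjunction and positivity-of-intersections arguments you give, and straightening them along a path $J_t$ in $\JJ(\om_\std)$ from $J_0=j\oplus j$ to $J_1=\phi_*J_0$ does produce a \emph{smooth} isotopy $g_s\in\Diff_\id(M)$ from $\id$ to $\phi$.

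The last step, however, is circular as written. Setting $\om_s:=g_s^*\om_\std$ and applying Moser yields $h_s$ with $h_0=\id$ and $h_s^*\om_s=\om_\std$, so $g_s\circ h_s$ is a path in $\Symp(M,\om_\std)$ from $\id$ to $\phi\circ h_1$; hence $\phi$ lies in the identity component if and only if $h_1$ does. But $h_1$ is itself a homology-trivial symplectomorphism of $(M,\om_\std)$ about which nothing more is known than that it is smoothly isotopic to the identity -- which is exactly the hypothesis you started from. You have reduced the theorem for $\phi$ to the theorem for $h_1$, with no gain. The sentence claiming that ``running the straightening construction compatibly with the symplectic forms'' places $h_1$ in the identity component is precisely the entire difficulty of Gromov's theorem, and it cannot be dispatched by assertion. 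The genuine content that is missing is a construction of the straightening map as a \emph{symplectomorphism} depending continuously on $J$: one must show that $\Symp(M,\om_\std)\cap\Diff_\id(M)\to\JJ(\om_\std)$, $\psi\mapsto\psi_*J_0$, is a fibration onto $\JJ(\om_\std)$ with fibre the K\"ahler isometry group $SO(3)\times SO(3)$, after which contractibility of $\JJ(\om_\std)$ gives connectedness (and the full homotopy type). Building the local sections of that map -- equivalently, producing a symplectic, not merely smooth, straightening of the two transverse $J$-foliations -- is what a single global Moser argument on the path $\om_s$ cannot supply, and is what the published careful accounts (Abreu, Abreu--McDuff) spend their effort on. Your proposal correctly locates the obstacle but does not overcome it.
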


A first consequence is

\begin{cor}\label{cor:fib}
Let $\SS=(\FF,\om_\std,L,\Sigma,\Sigma')$ be a relative
symplectic fibration of $M=S^2\times S^2$, where $\om_\std$ is the
standard symplectic form. Then there exists a homotopy of relative
symplectic fibrations $\SS_t=(\FF_t,\om_\std,L_t,\Sigma_t,\Sigma_t')$ with
fixed symplectic form $\om_\std$ such that $\SS_0=\SS$ and
$\SS_1=(\FF_\std,\om_\std,L_\std,S_0,S_\infty)$. 
\end{cor}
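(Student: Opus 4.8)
The plan is to feed the deformation equivalence of Theorem~\ref{thm:fib} into the form-fixing Proposition~\ref{prop:fix-om-par} and then remove the residual diffeomorphism by Gromov's Theorem~\ref{G85}.

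First I would invoke Theorem~\ref{thm:fib} to get a diffeomorphism $\phi\in\Diff_\id(M)$ and a homotopy of relative symplectic fibrations $\SS_t$ with $\SS_0=\phi(\SS)$ and $\SS_1=\SS_\std$. This homotopy is not yet of the required type, for two reasons: its symplectic form equals $\phi_*\om_\std$ at $t=0$ and varies with $t$, and $\phi$ is not known to be isotopic to the identity. To deal with the first point, apply Proposition~\ref{prop:fix-om-par} to $\SS_t$; this produces an isotopy $\psi_t\in\Diff_\id(M)$ with $\psi_0=\id$ such that $\psi_t^{-1}(\SS_t)$ is a homotopy of relative symplectic fibrations with the \emph{fixed} symplectic form $\phi_*\om_\std$, running from $\psi_0^{-1}(\SS_0)=\phi(\SS)$ to $\psi_1^{-1}(\SS_\std)$. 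Pushing this whole family forward by $\phi^{-1}$ yields the homotopy $t\mapsto(\psi_t\circ\phi)^{-1}(\SS_t)$; it consists of relative symplectic fibrations (since $\Diff_\id(M)$ acts on them), carries the fixed form $\om_\std$ because $(\phi^{-1})_*(\phi_*\om_\std)=\om_\std$, starts at $\SS$ and ends at $g^{-1}(\SS_\std)$ where $g:=\psi_1\circ\phi\in\Diff_\id(M)$.

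The key observation is that $g$ is a \emph{symplectomorphism} of $(M,\om_\std)$: the symplectic form component of $\psi_1^{-1}(\SS_\std)$ is on the one hand $\psi_1^*\om_\std$ and on the other hand, by Proposition~\ref{prop:fix-om-par}, the form of $\SS_0=\phi(\SS)$, namely $(\phi^{-1})^*\om_\std$; equating these and pulling back by $\phi$ gives $(\psi_1\circ\phi)^*\om_\std=\om_\std$. Since $g\in\Diff_\id(M)$ acts trivially on homology, Gromov's Theorem~\ref{G85} provides a symplectic isotopy $g_t\in\Symp(M,\om_\std)$ with $g_0=\id$ and $g_1=g$. Then $t\mapsto g_t^{-1}(\SS_\std)$ is a homotopy of relative symplectic fibrations with fixed form $\om_\std$ (each $g_t$ is symplectic and isotopic to the identity, hence lies in $\Diff_\id(M)$) from $\SS_\std$ to $g^{-1}(\SS_\std)$. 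Concatenating the homotopy of the previous paragraph with this one traversed in reverse, and reparametrising near the junction to keep it smooth, gives the desired homotopy from $\SS$ to $\SS_\std$ with fixed symplectic form $\om_\std$.

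The substance of the argument sits entirely in the cited results: Theorem~\ref{thm:fib} for the deformation equivalence, Proposition~\ref{prop:fix-om-par} for converting a varying symplectic form into a varying foliation at the cost of a diffeomorphism, and Gromov's theorem for disposing of that diffeomorphism. The only thing requiring care — and the one genuinely conceptual point rather than routine push-forward bookkeeping — is the realisation that once both endpoints of the homotopy carry the \emph{same} form $\om_\std$, the leftover map $g=\psi_1\circ\phi$ is automatically symplectic, which is precisely what makes Gromov's theorem applicable; I do not expect any other obstacle.
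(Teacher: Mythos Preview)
Your proof is correct and follows essentially the same strategy as the paper: invoke Theorem~\ref{thm:fib}, use Proposition~\ref{prop:fix-om-par} to make the symplectic form constant along the homotopy, observe that the residual diffeomorphism is then $\om_\std$-symplectic, and invoke Gromov's Theorem~\ref{G85} to connect it to the identity. The only cosmetic difference is bookkeeping: the paper absorbs the diffeomorphism by ``modifying $\phi$ accordingly'' (in effect applying Proposition~\ref{prop:fix-om-par} so that the fixed form is $\om_\std$ directly), whereas you first fix the form to $\phi_*\om_\std$ and then push forward by $\phi^{-1}$; the resulting concatenated homotopies are the same up to relabelling.
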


\begin{proof}
By Theorem~\ref{thm:fib}, there exists a homotopy of relative
symplectic fibrations $\SS_t=(\FF_t,\om_t,L_t,\Sigma_t,\Sigma_t')$
with $\SS_1=(\FF_\std,\om_\std,L_\std,S_0,S_\infty)$ and  a
diffeomorphism $\phi$ of $S^2\times S^2$ acting trivially on homology
such that $\phi(\SS)=\SS_0$. After applying
Proposition~\ref{prop:fix-om-par} and modifying $\phi$ accordingly
(keeping the same notation), we may assume that $\om_t=\om_\std$ for
all $t\in[0,1]$. Then $\phi$ is a symplectomorphism with respect to
$\om_\std$, so by Gromov's Theorem~\ref{G85} it can be connected to the
identity by a family of symplectomorphisms $\phi_t$. Now the
concatenation of the homotopies $\phi_t(\SS)_{t\in[0,1]}$, and
$(\SS_t)_{t\in[0,1]}$ is the desired homotopy with fixed symplectic
form $\om_\std$. 
\end{proof}

\begin{proof}
[{\bf Proof of the Main Theorem~\ref{thm:main}}]  
The hypotheses of Theorem~\ref{thm:main} just mean that
$\SS=(\FF,\om_\std,L,\Sigma,\Sigma')$ is a relative symplectic
fibration. By Corollary~\ref{cor:fib}, $\SS$ can be connected to
$(\FF_\std,\om_\std,L_\std,S_0,S_\infty)$ by a homotopy of relative
symplectic fibrations $\SS_t=(\FF_t,\om_\std,L_t,\Sigma_t,\Sigma_t')$
with fixed symplectic form $\om_\std$. In particular, $L_t$ is an
isotopy of monotone Lagrangian tori (with respect to $\om_\std$)
from $L_0=L$ to $L_1=L_\std$. By Banyaga's isotopy extension
theorem, there exists a symplectic isotopy $\phi_t$ with $\phi_0=\id$
and $\phi_t(L)=L_t$ for all $t$ (see the proof of
Proposition~\ref{prop:fix-om-par} for the argument, ignoring the
symplectic sections). Since $M$ is simply connected, the symplectic
isotopy $\phi_t$ is actually Hamiltonian. 
\end{proof}

Another consequence of Theorem~\ref{thm:fib} is the following 
result concerning standardisation by diffeomorphisms. 

\begin{cor}
[Fixing the symplectic form]\label{cor:fix-om}
Let $\SS=(\mathcal{F},\omega,L,\Sigma,\Sigma')$ be a relative
symplectic fibration of $M=S^2\times S^2$. Then there exists a
diffeomorphism $\phi$ of $S^2\times S^2$ acting trivially on homology 
such that $\phi^{-1}(\SS)=(\wt\FF,\omega_\std,L_\std,S_0,S_\infty)$ for
some foliation $\wt\FF$. 
\end{cor}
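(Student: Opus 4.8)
The plan is to deduce this from Theorem~\ref{thm:fib} (deformation equivalence of every relative symplectic fibration to $\SS_\std$) together with the parametric normalisation in Proposition~\ref{prop:fix-om-par} (which standardises the symplectic form along a homotopy). The one subtlety is that Proposition~\ref{prop:fix-om-par} normalises a homotopy to its \emph{time-$0$} data, so in order to end up with the standard form $\om_\std$ rather than with the pushed-forward form, one must apply it to a \emph{time-reversed} homotopy that starts at $\SS_\std$.

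First I would invoke Theorem~\ref{thm:fib}: $\SS$ is deformation equivalent to $\SS_\std$, so by Definition~\ref{DisoAMSF}(c) there exist a diffeomorphism $\psi\in\Diff_\id(M)$ and a homotopy of relative symplectic fibrations $\SS_t$ with $\SS_0=\psi(\SS)$ and $\SS_1=\SS_\std=(\FF_\std,\om_\std,L_\std,S_0,S_\infty)$. Reversing time, $\RR_t:=\SS_{1-t}$ is again a homotopy of relative symplectic fibrations, now with $\RR_0=\SS_\std$ and $\RR_1=\psi(\SS)$. Applying Proposition~\ref{prop:fix-om-par} to $\RR_t$, and using that the time-$0$ member $\RR_0$ has symplectic form $\om_\std$, torus $L_\std$, and sections $S_0,S_\infty$, I obtain an isotopy $\theta_t\in\Diff_\id(M)$ with $\theta_0=\id$ such that
$$
   \theta_t^{-1}(\RR_t)=(\wt\FF_t,\om_\std,L_\std,S_0,S_\infty)
$$
for some family of foliations $\wt\FF_t$. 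Evaluating at $t=1$ gives $\theta_1^{-1}\bigl(\psi(\SS)\bigr)=(\wt\FF_1,\om_\std,L_\std,S_0,S_\infty)$.

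Finally I would set $\phi:=\psi^{-1}\circ\theta_1$. Being a composition of elements of the group $\Diff_\id(M)$, the diffeomorphism $\phi$ again acts trivially on homology, and $\phi^{-1}(\SS)=\theta_1^{-1}\bigl(\psi(\SS)\bigr)=(\wt\FF,\om_\std,L_\std,S_0,S_\infty)$ with $\wt\FF:=\wt\FF_1$, which is exactly the assertion. There is no genuinely hard step here: the content is entirely carried by Theorem~\ref{thm:fib} and Proposition~\ref{prop:fix-om-par}, and the only thing that needs a moment's care is the direction of the homotopy — without the time reversal Proposition~\ref{prop:fix-om-par} would only normalise $\SS$ to $(\FF',\psi_*\om,\psi(L),\psi(\Sigma),\psi(\Sigma'))$ rather than to the standard form.
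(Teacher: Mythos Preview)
Your proof is correct and follows essentially the same route as the paper: invoke Theorem~\ref{thm:fib} to get a homotopy from $\psi(\SS)$ to $\SS_\std$, then apply Proposition~\ref{prop:fix-om-par} to normalise the form along that homotopy. The paper's proof glosses over the time-reversal issue you flag (it simply asserts ``we may assume $(\om_t,L_t,\Sigma_t,\Sigma_t')=(\om_\std,L_\std,S_0,S_\infty)$'' after applying Proposition~\ref{prop:fix-om-par}), so your version is in fact a little more explicit on this point.
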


\begin{proof}
By Theorem~\ref{thm:fib}, there exists a homotopy of relative
symplectic fibrations $\SS_t=(\FF_t,\om_t,L_t,\Sigma_t,\Sigma_t')$
with $\SS_1=(\FF_\std,\om_\std,L_\std,S_0,S_\infty)$ and a
diffeomorphism $\phi$ of $S^2\times S^2$ acting trivially on homology
such that $\phi(\SS)=\SS_0$. After applying
Proposition~\ref{prop:fix-om-par} and modifying $\phi$ accordingly
(keeping the same notation), we may assume that
$(\om_t,L_t,\Sigma_t,\Sigma_t')=(\om_\std,L_\std,S_0,S_\infty)$ for 
all $t\in[0,1]$. Then $\phi$ maps $\SS$ to
$(\FF_0,\om_\std,L_\std,S_0,S_\infty)$. 
\end{proof}

In particular, Corollary~\ref{cor:fix-om} implies that
every symplectic form $\om$ on $S^2\times S^2$ which is compatible
with a relative symplectic fibration can be pulled back to $\om_\std$
by a diffeomorphism $\phi\in\Diff_\id(M)$. This is a special case of
the deep result by Lalonde and McDuff~\cite{LalMcD} that every
symplectic form $\om$ on $S^2\times S^2$ which is cohomologous to
the standard form $\om_\std$ can be pulled back to $\om_\std$ by a
diffeomorphism $\phi\in\Diff_\id(M)$. In fact, the hard part of the
proof in~\cite{LalMcD} (using Taubes' correspondence between
Seiberg-Witten and Gromov invariants) consists in showing that 
any such $\om$ is compatible with a symplectic fibration with a
section.

\appendix

\section{Homotopy groups of some diffeomorphism groups}\label{app:diff}

In this appendix we collect some well-known facts about the diffeomorphism
and symplectomorphism groups of the disk and annulus. We fix numbers
$0<a<b$ and let 
$$
   D:=\left\lbrace z\in \mathbb{C}\;\bigl|\; |z|\leq b \right\rbrace,\qquad 
   A:=\left\lbrace z \in \mathbb{C}\;\bigl|\; a \leq |z|\leq b \right\rbrace 
$$
be equipped with the standard symplectic form
$\ostd=\frac{r}{\pi(1+r^2)^2}dr\wedge d\theta$ in polar coordinates on
$\C$ (the precise choice of the symplectic form does not matter
because they are all isomorphic up to scaling by Moser's theorem). 
We define the following diffeomorphism groups, all equipped with the
$C^\infty$ topology:
\begin{itemize}
\item $\Diff(D,\partial D)$ the group of diffeomorphisms of the closed
  disk $D$ that are equal to the identity in some neighbourhood of the boundary;
\item $\Diff(A,\partial A)$ the group of diffeomorphisms of the closed
  annulus $A$ that are equal to the identity in some neighbourhood of
  the boundary; 
\item $\Symp(A,\partial A,\ostd)\subset \Diff(A,\partial A)$ the subgroup of
  symplectomorphisms of $(A,\ostd)$;
\item $\Symp_0(A,\partial A,\ostd)$ the identity component of
  $Symp(A,\partial A,\ostd)$.
\end{itemize}

{\bf Diffeomorphisms. }
All results in this appendix are based on the following fundamental
theorem of Smale. 

\begin{theorem}[Smale~\cite{Sma}]\label{thm:Smale}
The group $\Diff(D,\partial D)$ is contractible.
\end{theorem}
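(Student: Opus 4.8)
The plan is to deduce this classical fact via the Riemann mapping theorem, realising the homotopy type of $\Diff(D,\partial D)$ as a homotopy fibre in a short chain of fibrations whose other terms are understood. (This streamlines Smale's original argument; an alternative route passes through Smale's theorem $\Diff_+(S^2)\simeq SO(3)$, but that input is harder.)

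First I would introduce the space $\CC$ of smooth conformal structures on the closed disk $D$ compatible with its orientation, viewed as a space of sections of the bundle over $D$ whose fibre is the (contractible, convex) space of positive-definite symmetric bilinear forms of determinant one; thus $\CC$ is contractible. The group $\Diff_+(D)$ acts on $\CC$ by pullback, and I would check — using the isotopy extension theorem to produce local sections — that the orbit map $\Diff_+(D)\to\CC$, $\phi\mapsto\phi^*j_0$, is a Serre fibration. The Riemann mapping theorem shows this action is transitive: for $j\in\CC$, a biholomorphism $(\mathrm{int}\,D,j)\to(\mathrm{int}\,D,j_0)$ extends to an orientation-preserving diffeomorphism $\phi$ of $D$ with $\phi^*j_0=j$. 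The stabiliser of $j_0$ is the M\"obius group $\Aut(D,j_0)$ of orientation-preserving biholomorphisms of the disk, which is diffeomorphic to $S^1\times\R^2$ and so homotopy equivalent to its rotation subgroup $SO(2)\simeq S^1$. Contractibility of $\CC$ then makes the inclusion $\Aut(D,j_0)\hookrightarrow\Diff_+(D)$ a weak homotopy equivalence, whence $\Diff_+(D)\simeq S^1$.

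Next I would use the boundary-restriction map $r\colon\Diff_+(D)\to\Diff_+(S^1)$: it is surjective (extend a boundary diffeomorphism over a collar and by the identity inside) and a Serre fibration again by isotopy extension, with fibre over the identity the group $G$ of diffeomorphisms of $D$ fixing $\partial D$ pointwise; a standard collar argument identifies $G$ up to homotopy with $\Diff(D,\partial D)$, so it suffices to show $G$ is weakly contractible. Since $\Diff_+(S^1)$ deformation retracts onto its rotation subgroup, $\Diff_+(S^1)\simeq S^1$, and the composite $\Aut(D,j_0)\hookrightarrow\Diff_+(D)\xrightarrow{\,r\,}\Diff_+(S^1)$ carries $SO(2)\subset\Aut(D,j_0)$ isomorphically onto the rotation subgroup of $\Diff_+(S^1)$, hence is a homotopy equivalence. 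Running the long exact homotopy sequence of $G\to\Diff_+(D)\xrightarrow{r}\Diff_+(S^1)$ then shows $r_*$ is an isomorphism on all homotopy groups, which forces $\pi_k(G)=0$ for every $k$; since $G$ has the homotopy type of a CW complex, Whitehead's theorem gives that $G$, and therefore $\Diff(D,\partial D)$, is contractible.

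The main obstacle is purely analytic: one must ensure the uniformising maps extend smoothly to the closed disk and depend smoothly on the conformal structure, so that $\Diff_+(D)\to\CC$ is a genuine smooth fibration. This needs elliptic boundary regularity for the relevant $\bar\partial$-equation (or Carath\'eodory's theorem together with smoothness of the boundary correspondence for smoothly bounded domains), plus the implicit function theorem for smooth dependence on parameters. Everything else — contractibility of $\CC$, the homotopy types of $\Aut(D,j_0)$ and $\Diff_+(S^1)$, and the comparison of ``identity on $\partial D$'' with ``identity near $\partial D$'' — is elementary.
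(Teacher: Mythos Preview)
The paper does not prove this statement: Theorem~\ref{thm:Smale} is stated in the appendix as a classical result of Smale and simply cited from~\cite{Sma}, with no proof given. So there is nothing in the paper to compare your argument against.

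That said, your outline is a correct and well-known route to the result (essentially the Earle--Eells approach via the contractible space of conformal structures, combined with the fibration over $\Diff_+(S^1)$). The chain of reductions is sound, and you have correctly identified the one genuinely delicate point: smooth boundary regularity and smooth parameter-dependence of the uniformising maps, which is what makes $\Diff_+(D)\to\CC$ a fibration with the stated fibre. Once that analytic input is granted, the rest is elementary homotopy theory. Since the paper only \emph{uses} Smale's theorem as a black box (to deduce Corollaries~\ref{TDiff} and~\ref{cor:Diff}), your write-up already goes well beyond what the paper requires.
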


With a nondecreasing cutoff function $\rho:\R\to[0,1]$ which equals
$0$ near $(-\infty,a]$ and $1$ near $[b,\infty)$, we define the {\em
    Dehn twist}
$$
   \phi^D \colon A \to A,\qquad 
   re^{i\theta}\mapsto re^{i(\theta+2\pi \rho(r))}.
$$

\begin{cor}\label{TDiff}
All homotopy groups $\pi_i\Diff(A,\partial A)$ vanish except for the group
$\pi_0\Diff(A,\partial A)=\mathbb{Z}$, which is generated by the Dehn
twist $\phi^D$. 
\end{cor}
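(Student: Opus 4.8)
The plan is to realise $\Diff(A,\partial A)$, up to homotopy equivalence, as the fibre of a fibration whose total space is the \emph{contractible} group $\Diff(D,\partial D)$ of Theorem~\ref{thm:Smale}, and then read off all homotopy groups from the long exact sequence. Write $D=D_b=\{z:|z|\le b\}$ and fix the smaller closed disk $D_a=\{z:|z|\le a\}$, so that $A$ is the closure of $D_b\setminus D_a$. Consider the restriction map
$$
   r\colon \Diff(D_b,\partial D_b)\longrightarrow \mathrm{Emb}^+\!\bigl(D_a,\,\mathrm{int}\,D_b\bigr),\qquad \phi\longmapsto \phi|_{D_a},
$$
into the space of orientation-preserving smooth embeddings of $D_a$ into the open disk (each $\phi$ is orientation preserving since it is the identity near $\partial D_b$ and $D_b$ is connected). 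By the isotopy extension theorem $r$ is surjective, and by the parametric isotopy extension theorem (Palais, Cerf) it is a Serre fibration. Its fibre over the standard inclusion is $G:=\{\phi\in\Diff(D_b,\partial D_b):\phi|_{D_a}=\id\}$; extending by the identity on $D_a$ identifies $\Diff(A,\partial A)$ with the subspace of those $\phi\in G$ that are the identity \emph{near} $\{|z|=a\}$, and a uniqueness-of-collars argument shows this inclusion is a homotopy equivalence.

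Next I would determine the homotopy type of the base. Fixing a diffeomorphism $\mathrm{int}\,D_b\cong\mathbb{R}^2$, we may replace the target by $\mathrm{Emb}^+(D_a,\mathbb{R}^2)$. The ``zoom in at the centre'' homotopy $e_s(z):=e(0)+\tfrac1s\bigl(e(sz)-e(0)\bigr)$, $s\in(0,1]$, extended at $s=0$ by the affine map $z\mapsto e(0)+De(0)z$, deformation retracts $\mathrm{Emb}^+(D_a,\mathbb{R}^2)$ onto the space of affine embeddings, which is homeomorphic to $GL^+(2,\mathbb{R})\times\mathbb{R}^2$ and hence homotopy equivalent to $SO(2)=S^1$. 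The long exact sequence of the fibration $G\to\Diff(D_b,\partial D_b)\to S^1$, together with the contractibility of $\Diff(D_b,\partial D_b)$ (Theorem~\ref{thm:Smale}), then yields isomorphisms $\pi_i\Diff(A,\partial A)\cong\pi_i(G)\cong\pi_{i+1}(S^1)$ for all $i\ge0$. Thus $\pi_0\Diff(A,\partial A)\cong\pi_1(S^1)=\mathbb{Z}$ and $\pi_i\Diff(A,\partial A)=0$ for all $i\ge1$.

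It remains to identify the generator. A generator of $\pi_1(S^1)\cong\pi_1\mathrm{Emb}^+(D_a,\mathbb{R}^2)$ is represented by the loop that rigidly rotates $D_a$ about its centre once. Lifting this loop through $r$ to a path in $\Diff(D_b,\partial D_b)$ starting at $\id$ --- for instance $\psi_t(re^{i\theta})=re^{i(\theta+2\pi t\,\beta(r))}$ with $\beta\equiv1$ near $[0,a]$ and $\beta\equiv0$ near $b$ --- the endpoint $\psi_1$ restricts on $A$ to a Dehn twist, i.e.\ to $(\phi^D)^{\pm1}$. Hence the connecting homomorphism $\pi_1(S^1)\to\pi_0\Diff(A,\partial A)$ carries a generator to $[\phi^D]$, so $\pi_0\Diff(A,\partial A)$ is infinite cyclic, generated by the Dehn twist $\phi^D$.

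The main obstacle is the pair of inputs underlying the fibration: that $r$ is a Serre fibration and that its fibre $G$ is homotopy equivalent to $\Diff(A,\partial A)$. Both are standard --- the first is parametric isotopy extension, the second a collar-uniqueness argument --- but this is where the content beyond Theorem~\ref{thm:Smale} sits; the homotopy equivalence $\mathrm{Emb}^+(D^2,\mathbb{R}^2)\simeq S^1$ and the long exact sequence are then routine.
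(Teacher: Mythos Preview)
Your proof is correct and follows essentially the same strategy as the paper: realise $\Diff(A,\partial A)$ as the fibre of the restriction-to-$D_a$ fibration out of the contractible group $\Diff(D,\partial D)$, then compute that the base has the homotopy type of $S^1$. The only differences are cosmetic---the paper (somewhat loosely) writes the base as $\Diff^+(D_a)$ and computes its homotopy groups via a second application of Smale's theorem (restriction to $\partial D_a\cong S^1$), whereas you work with the embedding space and linearise directly; you also trace the generator through the connecting homomorphism explicitly, which the paper leaves implicit.
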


\begin{proof}
Restriction of elements in $\Diff(D,\p D)$ to the smaller disk
$D_a\subset D$ of radius $a$ yields a Serre fibration
$$
   \Diff(A,\p A)\to \Diff(D,\p D)\to \Diff^+(D_a),
$$
where $\Diff^+$ denotes the orientation preserving diffeomorphisms. 
In view of Smale's Theorem~\ref{thm:Smale}, the long exact sequence of
this fibration yields isomorphisms $\pi_i\Diff^+(D_a)\cong
\pi_{i-1}\Diff(A,\p A)$ for all $i\geq 1$. Again by
Theorem~\ref{thm:Smale}, the long exact sequence of the pair
$(\Diff^+(D_a),\Diff^+(\p D_a))$ yields isomorphisms $\pi_i\Diff^+(\p
D_a)\cong\pi_i\Diff^+(D_a)$ for all $i$. Since $\pi_i\Diff^+(\p
D_a)\cong \pi_i\Diff^+(S^1)$ equals $\Z$ for $i=1$ and $0$ otherwise,
this proves the corollary. 
\end{proof}

For the following slight refinement of Corollary~\ref{TDiff}, let
$E\subset A$ be a circle $\{c\}\times S^1$ for some $c\in(a,b)$.  

\begin{cor}\label{cor:Diff}
Every smooth loop $(\phi_t)_{t\in[0,1]}$ in $\Diff(A,\partial A)$ with
$\phi_0=\phi_1=\id$ and $\phi_t(E)=E$ for all $t$ can be contracted by
a smooth family $\phi_t^s\in\Diff(A,\p A)$, $s,t\in[0,1]$, satisfying
$\phi_t^0=\phi_0^s=\phi_1^s=\id$, $\phi_t^1=\phi_t$ and $\phi_t^s(E)=E$
for all $s,t$. 
\end{cor}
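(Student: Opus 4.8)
The plan is to reduce the Corollary to the single statement that the topological group $G:=\Diff(A,\partial A;E)$ of diffeomorphisms of $A$ equal to the identity near $\partial A$ and mapping $E$ onto itself has trivial fundamental group. Granting this, the given loop $(\phi_t)_{t\in[0,1]}$ is based at $\id$, hence lies in the identity component of $G$ and bounds a continuous map $D^2\to G$; smoothing this map relative to its (already smooth) boundary loop and reparametrising near the edges of the square then yields a family $\phi^s_t\in G$ with exactly the asserted properties $\phi^0_t=\phi^s_0=\phi^s_1=\id$ and $\phi^1_t=\phi_t$. So all the content is the computation $\pi_1G=0$.

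To obtain this I would cut $A$ along $E$ into two closed sub-annuli $A_-,A_+$ and introduce the subgroup $H:=\Diff(A,\partial A\cup E)$ of diffeomorphisms equal to the identity near $\partial A\cup E$. Every element of $G$ preserves $A_-$ and $A_+$ (it fixes points near $\partial A$), so restriction to the two pieces identifies $H\cong\Diff(A_-,\partial A_-)\times\Diff(A_+,\partial A_+)$, and Corollary~\ref{TDiff} gives $\pi_0H\cong\Z^2$, generated by the two Dehn twists, with $\pi_iH=0$ for $i\ge1$. Next I would pass to the germ along $E$: this gives a map $G\to\mathcal{G}$, where $\mathcal{G}$ is the group of germs along $E$ of diffeomorphisms preserving $E$; it is a Serre fibration (any such germ is realised by an honest element of $G$ supported near $E$, in families and compatibly with a prescribed partial lift), and its fibre is $H$. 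Since $E$ is two-sided in $A$, restriction to $E$ is a homotopy equivalence $\mathcal{G}\simeq\Diff^+(S^1)\simeq SO(2)$ --- the ``normal'' part of a germ fixing $E$ varies in a contractible space of collar/jet data --- so $\pi_1\mathcal{G}\cong\Z$ and $\pi_i\mathcal{G}=0$ for $i\ge2$. The homotopy exact sequence of $H\to G\to\mathcal{G}$ then reduces to
$$
   0=\pi_1H\to\pi_1G\to\pi_1\mathcal{G}=\Z\xrightarrow{\ \partial\ }\pi_0H=\Z^2,
$$
so it only remains to check that the connecting homomorphism $\partial$ is injective.

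For that I would write down an explicit lift of a generator of $\pi_1\mathcal{G}$, namely the rotation loop $\theta\mapsto R_\theta$, $\theta\in[0,2\pi]$, of $E$. Fixing a cutoff $\beta\colon[a,b]\to[0,1]$ that equals $1$ near $|z|=c$, increases from $0$ to $1$ on $[a,c]$, decreases from $1$ to $0$ on $[c,b]$, and equals $0$ near $a$ and $b$, the path $\theta\mapsto\Phi_\theta$ with $\Phi_\theta(re^{i\psi})=re^{i(\psi+\theta\beta(r))}$ lies in $G$, starts at $\id$, and has germ $R_\theta$ along $E$; hence $\partial(1)=[\Phi_{2\pi}]\in\pi_0H$. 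Under $H\cong\Diff(A_-,\partial A_-)\times\Diff(A_+,\partial A_+)$ the class $[\Phi_{2\pi}]$ equals $(+1,-1)$, since $\Phi_{2\pi}|_{A_-}$ is a Dehn twist (there $\beta$ runs from $0$ to $1$) and $\Phi_{2\pi}|_{A_+}$ is the inverse Dehn twist, each generating the respective $\pi_0$ by Corollary~\ref{TDiff}. In particular $\partial(1)\neq0$, so $\partial$ is injective and $\pi_1G=0$ (and, as a byproduct, $\pi_0G\cong\Z$).

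The step I expect to require the most care in a fully rigorous write-up --- the ``main obstacle'' --- is the identification of the homotopy type of the germ group $\mathcal{G}$ together with the fibration property of $G\to\mathcal{G}$. Both are standard consequences of the isotopy extension theorem and the contractibility of spaces of collar data, but they are precisely the points where one must be careful about working with germs (rather than with diffeomorphisms of a fixed neighbourhood of $E$) and about smoothness of the extensions. Once this is in place, the rest is the short diagram chase and the explicit Dehn-twist computation above, followed by the routine smoothing of the abstract nullhomotopy of the loop.
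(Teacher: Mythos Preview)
Your proposal is correct but takes a different route from the paper. You compute $\pi_1G=0$ for $G=\Diff(A,\partial A;E)$ via the fibration exact sequence of $H\to G\to\mathcal{G}$ (with $\mathcal{G}$ the germ group along $E$), identifying $\pi_1\mathcal{G}\cong\Z$ and $\pi_0H\cong\Z^2$ and checking injectivity of the connecting map by an explicit Dehn-twist lift. The paper instead argues directly on the given loop in two stages: first it uses an elementary ``arc trick'' --- the arcs $\phi_t([a,c]\times\{e\})$ all start at the fixed point $(a,e)$, which forces the winding number of $t\mapsto\phi_t(c,e)$ in $E$ to vanish --- to see that $\phi_t|_E$ is null-homotopic in $\Diff(E)$; this allows a half-contraction to a loop $\phi^{1/2}_t$ with $\phi^{1/2}_t|_E=\id$, which then splits into two independent loops on the sub-annuli $A_\pm$, each contractible by Corollary~\ref{TDiff}. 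Your approach is more systematic (and yields $\pi_0G\cong\Z$ for free), at the price of the germ-group technicalities you flag --- it would be a bit cleaner to fibre $G$ over $\Diff^+(E)$ directly and argue that the fibre $\{\phi\in G:\phi|_E=\id\}$ has the homotopy type of $H$. The paper's approach avoids fibration machinery entirely: the arc observation replaces your connecting-map computation by a one-line geometric remark, and the rest is just two applications of $\pi_1\Diff(\text{annulus},\partial)=0$.
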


\begin{proof}
For a point $e\in S^1$ the family of arcs $\phi_t([a,c]\times\{e\})$
starts and ends at $t=0,1$ with the arc $[a,c]\times\{e\}$. This shows
that the loop $t\mapsto\phi_t(c,e)$ in $E$ is contractible, hence so
is the loop $\phi_t|_E$ in $\Diff(E)$. Thus we can find a family  
$\phi_t^s\in\Diff(A,\p A)$, $s,t\in[1/2,1]$, satisfying
$\phi_0^s=\phi_1^s=\id$, $\phi_t^1=\phi_t$, $\phi_t^s(E)=E$
for all $s,t$, and $\phi_t^{1/2}|_E=\id$ for all $t$. Now apply
Corollary~\ref{TDiff} to contract the loops
$\phi_t^{1/2}|_{[a,c]\times S^1}$ in $\Diff([a,c]\times
S^1,\p[a,c]\times S^1)$ and $\phi_t^{1/2}|_{[c,b]\times S^1}$ in 
$\Diff([c,b]\times S^1,\p[c,b]\times S^1)$.
\end{proof}

{\bf Symplectomorphisms. }
The following result is an immediate consequence of
Corollary~\ref{TDiff} and Moser's theorem. 

\begin{proposition}\label{PHE}
The groups $\Diff(A,\partial A)$ and $\Symp(A,\partial A,\ostd)$ are weakly
homotopy equivalent. In particular, $\Symp_0(A,\partial A,\ostd)$ is
contractible. 
\end{proposition}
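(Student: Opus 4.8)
The plan is to show that the inclusion $j\colon\Symp(A,\p A,\ostd)\hookrightarrow\Diff(A,\p A)$ is a weak homotopy equivalence by a parametric Moser argument, and then to read off contractibility of $\Symp_0(A,\p A,\ostd)$ from Corollary~\ref{TDiff}.

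First I would prove surjectivity of $j_\ast$ on $\pi_k$ for all $k\geq 0$. Given a smooth map $f\colon S^k\to\Diff(A,\p A)$, $x\mapsto\phi_x$, with $\phi_{x_0}=\id$, set $\omega_x:=\phi_x^\ast\ostd$. Since each $\phi_x$ is orientation preserving and equals the identity near $\p A$, the form $\omega_x$ is an area form that equals $\ostd$ near $\p A$ and satisfies $\int_A\omega_x=(\deg\phi_x)\int_A\ostd=\int_A\ostd$; hence $\omega_x-\ostd$ is exact with a primitive vanishing near $\p A$, and the convex combination $\omega_x^t:=(1-t)\ostd+t\,\omega_x$ is, for every $t\in[0,1]$, again an area form with these same properties. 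The relative Poincar\'e lemma gives such primitives depending smoothly on $(x,t)$, so the Moser isotopy can be run parametrically: one obtains a smooth family $\rho_x^t\in\Diff(A,\p A)$ with $\rho_x^0=\id$ and $(\rho_x^t)^\ast\omega_x^t=\ostd$, and the construction is natural, hence $\rho_x^t=\id$ whenever $\omega_x\equiv\ostd$ (in particular for $x=x_0$). Then $g_x:=\phi_x\circ\rho_x^1$ satisfies $(\phi_x\circ\rho_x^1)^\ast\ostd=\ostd$, so $g_x\in\Symp(A,\p A,\ostd)$ with $g_{x_0}=\id$, while $s\mapsto(x\mapsto\phi_x\circ\rho_x^s)$ is a based homotopy in $\Diff(A,\p A)$ from $f$ to $j\circ g$. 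This shows $j_\ast\colon\pi_k\Symp\to\pi_k\Diff$ is onto.

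For injectivity I would run the same argument one dimension up: if $f\colon S^k\to\Symp(A,\p A,\ostd)$ extends to $\bar f\colon D^{k+1}\to\Diff(A,\p A)$, apply the parametric Moser construction to $\omega_x:=\bar f(x)^\ast\ostd$ for $x\in D^{k+1}$. On $\p D^{k+1}=S^k$ we have $\omega_x=\ostd$, so by naturality $\rho_x^t=\id$ there, and $g_x:=\bar f(x)\circ\rho_x^1$ defines a map $D^{k+1}\to\Symp(A,\p A,\ostd)$ restricting to $f$ on $\p D^{k+1}$. Hence $j_\ast$ is injective (the case $k=0$, with $D^1$ in place of $S^0$, gives injectivity on $\pi_0$), so $j$ is a weak homotopy equivalence.

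Finally, the ``in particular'': by Corollary~\ref{TDiff} all higher homotopy groups of $\Diff(A,\p A)$ vanish, so its identity component $\Diff_0(A,\p A)$ is weakly contractible; since $j$ induces a bijection on $\pi_0$, it restricts to a weak homotopy equivalence $\Symp_0(A,\p A,\ostd)\to\Diff_0(A,\p A)$, whence $\Symp_0(A,\p A,\ostd)$ is weakly contractible, and contractible because these groups have the homotopy type of CW complexes. The step I expect to be the main (if routine) obstacle is verifying that the Moser construction is genuinely smooth in the parameter $x$ and canonical enough to be the identity exactly when $\omega_x\equiv\ostd$; this naturality is precisely what makes the relative argument for injectivity work and forces the nullhomotopy over $\p D^{k+1}$ to agree with $f$.
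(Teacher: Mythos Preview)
Your proposal is correct and is exactly the argument the paper has in mind: the paper's proof consists of the single sentence ``an immediate consequence of Corollary~\ref{TDiff} and Moser's theorem,'' and your parametric Moser argument for the weak homotopy equivalence, followed by the appeal to Corollary~\ref{TDiff}, is precisely the unpacking of that sentence. Your flagged ``main obstacle'' (smooth and natural dependence of the Moser isotopy on the parameter) is indeed routine, since the primitive of $\omega_x-\ostd$ vanishing near $\partial A$ can be chosen canonically via the relative Poincar\'e lemma.
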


\begin{remark}\label{rem:Dehn}
The Dehn twist $\phi^D$ defines an element in $\Symp(A,\partial A,\ostd)$,
which according to Proposition~\ref{PHE} generates
$\pi_0\Symp(A,\partial A,\ostd)$. 
\end{remark}

Finally, we need the following refinement of
Proposition~\ref{PHE}. Again, let $E\subset A$ be a circle $\{c\}\times S^1$
for some $c\in(a,b)$. 

\begin{lemma}\label{L1}
Each $\phi \in \Symp_0(A,\partial A,\ostd)$ with $\phi(E)=E$ can be
connected to the identity by a smooth path $\phi_t \in
\Symp_0(A,\partial A,\ostd)$ satisfying $\phi_t(E)=E$ for all
$t\in[0,1]$. 
\end{lemma}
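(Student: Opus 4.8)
The plan is to show that the subgroup
$$G:=\{\psi\in\Symp_0(A,\p A,\ostd)\mid\psi(E)=E\}$$
is path-connected; since $G$ is a subgroup of $\Symp_0(A,\p A,\ostd)$, it then suffices, given $\phi\in G$, to join $\phi$ to $\id$ by a path inside $G$. The circle $E$ cuts $A$ into two closed sub-annuli $A_-,A_+$ (the components of $\overline{A\setminus E}$). Since $\phi$ is the identity near $\p A$, it maps $A_\pm$ to themselves, and being symplectic it preserves the co-orientation of $E$, hence $\phi|_E\in\Diff^+(E)$.

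\textbf{Step 1: normalisation near $E$.} As $\Diff^+(S^1)$ is connected, pick a smooth isotopy $g_t\in\Diff^+(E)$ from $\id$ to $\phi|_E$, generated by time-dependent vector fields $w_t(\theta)\,\p_\theta$. In Darboux coordinates $(s,\theta)$ on a collar of $E$ (so $\ostd=ds\wedge d\theta$, $E=\{s=0\}$), the time-dependent Hamiltonian $H_t(s,\theta):=-s\,\rho(s)\,w_t(\theta)$, with $\rho$ a cutoff equal to $1$ near $s=0$ and supported in the collar, has Hamiltonian vector field tangent to $E$ along $E$ and equal there to $w_t\,\p_\theta$; hence its flow $\Theta_t$ is a path in $G$ from $\id$ to a map $\Theta$ with $\Theta|_E=\phi|_E$. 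Replacing $\phi$ by $\Theta^{-1}\phi$ (isotopic to $\phi$ within $G$), we may assume $\phi|_E=\id_E$. Now a symplectomorphism fixing the Lagrangian circle $E$ pointwise is, near $E$, the time-$1$ map of a Hamiltonian isotopy whose generating functions vanish on $E$ (first kill the shear $d\phi|_E-\Id$ by a Hamiltonian quadratic in $s$, then a standard Moser/scaling argument on the collar); cutting off that Hamiltonian away from $E$ as before yields $\Theta'\in G$ connected to $\id$ in $G$ with $\Theta'=\phi$ near $E$. Replacing $\phi$ by $(\Theta')^{-1}\phi$, we may assume $\phi=\id$ on a neighbourhood of $E$. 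This is the step I expect to require the most care.

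\textbf{Step 2: reduction of the Dehn twist.} Now $\phi=\phi_-\circ\phi_+$, where $\phi_\pm:=\phi|_{A_\pm}$, extended by the identity, lies in the group $\Symp_c(\inn A_\pm)$ of symplectomorphisms compactly supported in the open sub-annulus $\inn A_\pm$. By Moser's theorem together with Smale's theorem (the same reasoning as in Proposition~\ref{PHE} and Corollary~\ref{TDiff}), $\pi_0\Symp_c(\inn A_\pm)\cong\Z$, detected by the number of Dehn twists, and the inclusion-induced maps $\pi_0\Symp_c(\inn A_\pm)\to\pi_0\Symp(A,\p A,\ostd)\cong\Z$ are isomorphisms. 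Since $\phi\in\Symp_0(A,\p A,\ostd)$ we have $[\phi]=0$, whence $[\phi_-]=-[\phi_+]=:k\in\Z$. In polar coordinates $re^{i\theta}$ on $A$, choose a nondecreasing function $\gamma$ with $\gamma\equiv 0$ near $\p_-A$ and $\gamma\equiv k$ from some radius $<c$ onwards; then the latitude rotation $R_\gamma(re^{i\theta}):=re^{i(\theta+2\pi\gamma(r))}$ lies in $\Symp_c(\inn A_-)$ with $[R_\gamma]=k$. Symmetrically pick $R_\delta\in\Symp_c(\inn A_+)$ with $[R_\delta]=-k$, where $\delta\equiv 0$ for $r\le c$ and $\delta\equiv -k$ near $\p_+A$. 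Put $\psi:=R_\gamma\circ R_\delta=R_{\gamma+\delta}$; note $\psi=\id$ near $E\cup\p A$ and $\psi(E)=E$.

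\textbf{Step 3: conclusion.} The profile $\gamma+\delta$ vanishes near $\p A$ and equals the constant $k$ on a range of radii containing $c$, so the latitude rotations $\Psi_t:=R_{t(\gamma+\delta)}$, $t\in[0,1]$, are all symplectic, equal the identity near $\p A$, and merely rotate the circle $E$ — hence preserve it — giving a path $\Psi_t$ in $G$ from $\id$ to $\psi$. On the other hand $\phi\psi^{-1}$ restricts on $A_-$ to $\phi_-\circ R_\gamma^{-1}$ and on $A_+$ to $\phi_+\circ R_\delta^{-1}$, of classes $k-k=0$ and $-k-(-k)=0$ in $\pi_0\Symp_c(\inn A_-)$ and $\pi_0\Symp_c(\inn A_+)$ respectively; hence each restriction is joined to the identity through compactly supported symplectomorphisms of $\inn A_\pm$, and since these isotopies are the identity near $E$ they glue to a path $\Phi_t$ in $G$ from $\id$ to $\phi\psi^{-1}$. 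Then $t\mapsto\Phi_t\circ\Psi_t$ is a path in $G$ from $\id$ to $(\phi\psi^{-1})\circ\psi=\phi$, which is the desired path.
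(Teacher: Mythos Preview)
Your argument is correct and follows essentially the same four-part strategy as the paper: first make $\phi|_E=\id$ via a Hamiltonian whose restriction to $E$ generates the given circle isotopy; then make $\phi=\id$ on a collar of $E$; then observe that the Dehn-twist indices on the two sub-annuli are opposite (because $\phi\in\Symp_0$) and unwind them simultaneously by a family of latitude rotations preserving $E$; finally contract what remains on each sub-annulus separately.

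The only substantive difference is in the second sub-step of your Step~1. You sketch ``kill the shear by a quadratic Hamiltonian, then Moser/scaling''; the paper instead uses the fibred Alexander trick $\psi_s=\tau_s^{-1}\phi\tau_s$ (which converges smoothly to the linearisation along $E$, here the identity once the shear is killed) to get a path of symplectic \emph{embeddings} of a collar, and then invokes Banyaga's isotopy extension theorem to promote this to an ambient symplectic isotopy in $G$. Your outline is sound (the quadratic Hamiltonian indeed produces exactly the required shear at the linear level, and after that the scaling argument works), but the passage from ``embeddings of the collar'' to ``ambient elements of $G$'' is the place where an extension theorem such as Banyaga's is doing real work; you flagged this correctly as the step needing most care.
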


\begin{proof}
After applying Moser's theorem and changing the values of $a,b,c$ (viewing
$A$ as a cylinder), we may assume that $\sigma_\std=dr\wedge d\theta$
and $c=0$. We connect $\phi$ to the identity in 4 steps.  

{\em Step 1. }The restriction $f(\theta):=\phi(0,\theta)$ of $\phi$ to
$E$ defines an element in the group $\Diff^+(S^1)$ of orientation preserving
diffeomorphisms of the circle. Since this group is path connected,
there exists a smooth family $f_t\in\Diff^+(S^1)$ with $f_0=\id$ and
$f_1=f$. This family if generated by the time-dependent vector field
$\xi_t$ on the circle defined by $\xi_t(f_t(\theta)):=\dot
f_t(\theta)$. Let $H_t:A\to\R$ be a smooth family of functions
satisfying $H_t(r,\theta) = -r\xi_t(\theta)$ near $E$ and $H_t=0$
near $\p A$. A short computation shows that the Hamiltonian vector
field of $H_t$ agrees with $\xi_t$ on $E$. It follows that the
Hamiltonian flow $\psi_t$ of $H_t$ satisfies $\psi_t|_E=f_t$, in
particular $\psi_1|_E=f_1=\phi|_E$. Thus $\phi_t:=\phi\circ\psi_t^{-1}$ 
is a smooth path in $\Symp_0(A,\partial A,\ostd)$ with $\phi_t(E)=E$ 
connecting $\phi$ to $\phi_1$ satisfying $\phi_1|_E=\id$. 
After renaming $\phi_1$ back to $\phi$, we may hence assume that
$\phi|_E=\id$. 

{\em Step 2. }
Let us write
$\phi(r,\theta)=\bigl(P(r,\theta),Q(r,\theta)\bigr)\in\R\times
S^1$. Since $\phi|_E=\id$ and $\phi$ is symplectic, the functions
$P,Q$ satisfy 
$$
   P(0,\theta)=0,\qquad Q(0,\theta)=\theta,\qquad \frac{\p P}{\p
     r}(0,\theta)=1.  
$$
For $s\in(0,1]$ consider the dilations $\tau_s(r,\theta):=(sr,\theta)$
on $A$. Since $\tau_s^*(dr\wedge d\theta)=s\,dr\wedge d\theta$, the
maps $\psi_s:=\tau_s^{-1}\circ\phi\circ\tau_s$ are symplectic. Since 
$$
   \psi_s(r,\theta) = \Bigl(\frac{1}{s}P(sr,\theta),Q(sr,\theta)\Bigr)
   \xrightarrow[s\to 0]{}\Bigl(r\frac{\p P}{\p
     r}(0,\theta),Q(0,\theta)\Bigr) 
   = (r,\theta),
$$ 
the family $\psi_s$ extends smoothly to $s=0$ by the identity 
(this is a fibered version of the Alexander trick). 
It follows that for a sufficiently small $\eps>0$ we have a smooth
family of symplectic embeddings $\psi_s:A_\eps:=[-\eps,\eps]\times
S^1\into A$, $s\in[0,1]$, with $\psi_s(E)=E$, $\psi_0=\id$, and
$\psi_1=\phi$. We extend this family to smooth diffeomorphisms
$\wt\psi_s:A\to A$ with $\wt\psi_s=\id$ near $\p A$ and
$\wt\psi_1=\phi$. Since $\wt\psi_s$ preserves the annuli 
$A^-:=[a,0]\times S^1$ and $A^+:=[0,b]\times S^1$, it satisfies
$\int_{A^\pm}\wt\psi_s^*\ostd=\int_{A^\pm}\ostd$ for all
$s\in[0,1]$. By Banyaga's Theorem~\ref{thm:Banyaga}, applied to the
isotopy $t\mapsto\phi^{-1}\circ\wt\psi_{1-t}$ and the set
$X:=[a,a+\eps]\times S^1\cup [b-\eps,b]\times S^1\cup
A_\eps$ for some possibly smaller $\eps>0$, there exists a symplectic
isotopy $\phi_s:A\to A$, $s\in[0,1]$, with $\phi_1=\phi$ and
$\phi_s|_X=\wt\psi_s|_X$. In particular, $\phi_s\in\Symp_0(A,\p
A,\ostd)$ preserves $E$ and $\phi_0|_{A_\eps}=\id$. 
After renaming $\phi_0$ back to $\phi$, we may hence assume that
$\phi=\id$ on an annulus $A_\eps$ around $E$. 

{\em Step 3. }
Since $\phi|_{A_\eps}=\id$, it restricts to maps
$\phi|_{A^\pm}\in\Symp(A^\pm,\p A^\pm,\ostd)$.  
By Proposition~\ref{PHE} and Remark~\ref{rem:Dehn}, $\phi|_{A^\pm}$
can be connected in $\Symp(A^\pm,\p A^\pm,\ostd)$ to a multiple
$(\phi_\pm^D)^{k_\pm}$ of the Dehn twist on $A^\pm$. Since $\phi$
belongs to the identity component $\Symp_0(A,\p A,\ostd)$, it follows
that $k_+=-k_-$. Hence we can simultaneously unwind the Dehn twists to
connect the map $\psi$ which equals $(\phi_\pm^D)^{k_\pm}$ on $A^\pm$
to the identity by a path $\psi_t$ in $\Symp_0(A,\p A,\ostd)$ fixing
$E$ (but not restricting to the identity on $E$). Thus
$\phi_t:=\phi\circ\psi_t^{-1}$ is a path in $\Symp_0(A,\partial
A,\ostd)$ with $\phi_t(E)=E$ connecting $\phi$ to $\phi_1$ such that
$\phi_1|_{A^\pm}$ belongs to the identity component $\Symp_0(A^\pm,\p
A^\pm,\ostd)$. Again, we rename $\phi_1$ back to $\phi$. 

{\em Step 4. }Finally, we apply Proposition~\ref{PHE} on $A^\pm$ to
connect $\phi|_{A^\pm}$ to the identity by a path $\phi_t^\pm$ in
$\Symp_0(A^\pm,\p A^\pm)$. The maps $\phi_t^\pm$ fit together to 
a path $\phi_t\in\Symp_0(A,\p A,\ostd)$ fixing $E$ that connects
$\phi$ to the identity.  
\end{proof}

\end{document}